\begin{document}

\title{High Order Finite Difference Methods on Non-uniform Meshes for Space Fractional Operators
\thanks{This work was supported by the National Natural Science
Foundation of China under Grant No. 11271173.}
}

\titlerunning{Finite Difference Methods on Non-uniform Meshes for Fractional Operators}        

\author{Lijing Zhao         \and
        Weihua Deng 
}


\institute{Lijing Zhao \at
              School of Mathematics and Statistics, Gansu Key Laboratory of Applied Mathematics and Complex Systems,
              Lanzhou University, Lanzhou 730000, P.R. China \\
              \email{zhaolj10@lzu.edu.cn}           
           \and
           Weihua Deng \at
              School of Mathematics and Statistics, Gansu Key Laboratory of Applied Mathematics and Complex Systems,
              Lanzhou University, Lanzhou 730000, P.R. China \\
               \email{dengwh@lzu.edu.cn, dengwhmath@aliyun.com}
}

\date{Received: date / Accepted: date}

\maketitle

\begin{abstract}

In the past decades, the finite difference methods for space fractional operators develop rapidly; to the best of our knowledge, all the existing finite difference schemes, including the first and high order ones, just work on uniform meshes. The nonlocal property of space fractional operator makes it difficult to design the finite difference scheme on non-uniform meshes. This paper provides a basic strategy to derive the first and high order discretization schemes on non-uniform meshes for fractional operators. And the obtained first and second schemes on non-uniform meshes are used to solve space fractional diffusion equations. The error estimates and stability analysis are detailedly performed; and extensive numerical experiments confirm the theoretical analysis or verify the convergence orders.


\keywords{space fractional operator \and mollification \and non-uniform meshes
\and error estimate}
\subclass{26A33 \and 57R10 \and 34E13 \and 65L20}
\end{abstract}

\section{Introduction}
\label{sec:1}
The continuous time random walk (CTRW) model plays a key role in statistical physics; and it describes the L{\'e}vy flight of anomalous diffusion when the first moment of waiting time distribution is finite and the distribution of the jump length is power law with divergent second moment. Based on this concrete CTRW model, the corresponding Fokker-Planck equation can be derived, which is the space fractional diffusion equation describing the time evolution of the probability density function of the L{\'e}vy-flight particles  \cite{Metzler:00}.


Over the last decades, the finite difference methods have achieved important developments in solving the
space fractional differential equations with Riemann-Liouville derivative, e.g., \cite{Celik:12,Chen2:13,Meerschaert:04,Yang:10}. The Riemann-Liouville space fractional derivative can be naturally discretized by the standard Gr\"{u}nwald formula \cite{Podlubny:99} with first order accuracy or Lubich's high order formulas \cite{Lubich:86}, but the finite difference schemes derived by the discretization are unconditionally unstable for the initial value problems including the implicit schemes that are well known
to be stable most of the time for classical derivatives \cite{Meerschaert:04}. To remedy the instability, Meerschaert and Tadjeran in \cite{Meerschaert:04} firstly propose the shifted Gr\"{u}nwald  formula to approximate fractional advection-dispersion flow equations with still first order accuracy. At that time, the shifted Gr\"{u}nwald discretization was the most effective and popular approximation to the space fractional operator.

Recently, the effective high order approximations to space fractional derivatives appeared \cite{Chen:13,Chen2:13,Li:13,Sousa:11,Tian:12,Zhao:14,Zhou:13}. Using the idea of second order central difference formula, a second order discretization for Riemann-Liouville fractional derivative is established in \cite{Sousa:11}, and the scheme is extended to two dimensional two-sided space fractional convection diffusion equation in finite domain in \cite{Chen:13}. By assembling the
Gr\"{u}nwald difference operators with different weights and shifts,  a class of stable second
order discretizations for Riemann-Liouville space fractional derivative is developed in \cite{Tian:12,Li:13}; and
its corresponding third order quasi-compact scheme is given in \cite{Zhou:13}.
Allowing to use the points outside of the domain, a class of second, third and fourth order approximations for Riemann-Liouville space fractional derivatives are derived in \cite{Chen2:13} by using the weighted and shifted Lubich difference operators.
More high order quasi-compact schemes based on the superconvergent approximations for fractional derivatives can be found in \cite{Zhao:14}.

Generally speaking, high order schemes lead to more accurate results, supposing that the solution of the equation to be solved is regular enough. In fact, compared with the classical partial differential equations (PDEs), the high order finite difference schemes obtain more striking benefits than the low order ones in solving the fractional PDEs. The reason is that the high order schemes can keep the same computational cost as the first order ones but greatly improve the accuracy. Usually, the high and low order schemes have the same algebraic structures, e.g., $(\textbf{T}-\textbf{A})\textbf{U}^{n+1}=(\textbf{T}+\textbf{A})\textbf{U}^{n}+\textbf{b}^{n+1}$, where $\textbf{T}$ is tri-diagonal, and $\textbf{A}$ is Toeplitz-like full matrix. Regarding to the possible weak regularity of the solution to the space fractional PDEs at the closest regions of the boundaries, the techniques introduced in the present paper and \cite{Chen:14,Zhao:14} can effectively treat it.


%

The techniques proposed in \cite{Chen:14,Zhao:14} deal with the boundary issue, essentially the issue of regularity of the solution in (extended) unbounded domain. More concretely, for obtaining the $l$-th convergence order, the solution of the equation must satisfy: $u(x)\in C^{l+\lfloor \alpha \rfloor-1}[a,b]$, $D^{l+\lfloor \alpha \rfloor}\in L^1[a,b]$, and $D^{k}u(a)=0,~k=0,1,\cdots,l+\lfloor \alpha \rfloor-2$, where $\alpha$ is the order of fractional derivative \cite{Tian:12,Zhou:13}.  The techniques in \cite{Chen:14,Zhao:14} remove the nonphysical boundary requirement but keep high convergence order; however, high regularity requirements on the whole domain $[a,b]$ are still needed. Generally, the regularity of the solution varies on the domain; in particular, for space fractional problem, the regularity of its solution at the region close to the boundary is much different from other places. As to that, the existing high order approaches cannot work effectively. This paper targets this issue by providing the numerical schemes on non-uniform meshes. Roughly speaking, similar to deal with the classical problems on non-uniform meshes, the basic idea here is that fine grids are used on the domain of weaker regularity, while coarse grids apply to the high regularity area. The nonlocal property of fractional operators makes it much complex or difficult to design the finite difference scheme on non-uniform meshes; currently, it seems there are no published works to deal with this topic.

Our idea of deriving the scheme is first to rewrite the function to be approximated as the sum of two functions having at least the same regularities as itself, then use different meshes to discretize the two functions; the common support of the functions are usually small. The obtained first and second order schemes on non-uniform meshes are used to solve the space fractional diffusion equation; and it should be noted that the interpolation is used in discretizing the equation. The error estimate and numerical stability of the schemes are detailedly discussed. And the extensive numerical experiments verify the theoretical results, including the convergence orders.

The organization of this paper is as follows. In Section
\ref{sec:2}, we introduce a kind of smooth functions, by which a function to be fractionally differentiated can be rendered into two other ones, which are much more manageable; in this way, two kinds of approximations on non-uniform meshes are discussed.
The derived schemes are used to solve a space fractional diffusion equation in Section \ref{sec:3}, and the convergence and stability analysis are performed in detail. Numerical results are given in Section \ref{sec:4}, which confirm the theoretical analysis and convergence orders. We conclude the paper with some remarks in Section \ref{sec:5}. For the concrete expressions of the matrix forms in Section \ref{sec:3}, refer to Appendix.


\section{Discretization on non-uniform meshes for fractional derivative}
\label{sec:2}

This section focuses on deriving the discretization methods on non-uniform meshes for the left Riemann-Liouville fractional
derivative; the ones for the right Riemann-Liouville fractional derivative or Riemann-Liouville fractional integral can be got by almost the same way, so they are omitted here.  The left Riemann-Liouville fractional derivative of a function $u(x)$ on $[a,b]$, $-\infty \leq a <b\leq \infty$ is defined by \cite{Podlubny:99,Samko:93}
\begin{equation}\label{equation1.1}
_{a}D_x^{\alpha}u(x)= D^{m}{}_{a}D_{x}^{-(m-\alpha)}u(x),
\end{equation}
where $\alpha \in (m-1,m)$ and
\begin{equation}\label{equation1.2}
{}_{a}D_{x}^{-\gamma}u(x)
=\frac{1}{\Gamma(\gamma)}\int_{a}\nolimits^x{\left(x-\xi\right)^{\gamma-1}}{u(\xi)}d\xi,  ~~~~~~ \gamma > 0,
\end{equation}
is the $\gamma$-th left Riemann-Liouville fractional integral.


For classical problem, designing the finite difference scheme on non-uniform meshes is an easy task. Is it still true for fractional operators? Let us see the following example.
Using the Gr\"{u}nwald  difference operator \cite{Meerschaert:02} as an approximation, if $u(x)\in \mathbf{C}^1[0,b]$, $D^2 u(x)\in L^1(0,b)$, and $u(0)=0$, then
\begin{eqnarray}\label{equation2.0.1}
&&\,_{0}D_{x}^{\alpha}u(x_{mn})
\nonumber\\
&=& h_1^{-\alpha}\sum_{k=0}^{n}g_k^{(\alpha)}u\left((n-k)h_1\right)+O(h_1)
\nonumber\\
&=& h_2^{-\alpha}\sum_{k=0}^{mn}g_k^{(\alpha)}u\left((m n-k)h_2\right)+O(h_2)
\end{eqnarray}
holds \cite{Zhao:14}, where $h_1=mh_2$, $x_k=kh_2$, $g_k^{(\alpha)}=(-1)^k\left( \begin{array}{c}\alpha \\ k\end{array} \right )$ are the coefficients of the power series of the generating function $(1-\zeta)^{\alpha}$;
 and they can be calculated by the following recursive formula
\begin{equation}\label{equation2.0.2}
g_0^{(\alpha)}=1, ~~~~g_k^{(\alpha)}=\left(1-\frac{\alpha+1}{k}\right)g_{k-1}^{\alpha},~~k \geq 1.
\end{equation}
For non-uniform grids, two naives ways are to replace the right side of (\ref{equation2.0.1}) by
\begin{eqnarray}\label{equation2.0.3}
\,_{0}D_{x}^{\alpha}u(x_{mn})&\approx& \Phi_{mn}
\nonumber\\
&:=& h_2^{-\alpha}\sum_{k=0}^{mn-[\frac{a}{h_2}]-1}g_k^{(\alpha)}u\left((mn-k)h_2\right)
\nonumber\\
&&+h_1^{-\alpha}\sum_{k=n-[\frac{a}{h_1}]}^{n}g_k^{(\alpha)}u\left((n-k)h_1\right),
\end{eqnarray}
or
\begin{eqnarray}\label{equation2.0.3_1}
\,_{0}D_{x}^{\alpha}u(x_{mn})&\approx& \Psi_{mn}
\nonumber\\
&:=& h_1^{-\alpha}\sum_{k=0}^{n-[\frac{a}{h_1}]-1}g_k^{(\alpha)}u\left((n-k)h_1\right)
\nonumber\\
&&+h_2^{-\alpha}\sum_{k=mn-[\frac{a}{h_2}]}^{mn}g_k^{(\alpha)}u\left((mn-k)h_2\right),
\end{eqnarray}
where $0<a<b$ is the dividing point. Figure \ref{fig:2.1.1} illustrates the non-uniform
grids of considering discretization formula of Eq. (\ref{equation2.0.3}). However, neither of these two schemes don't work.
By taking $u=x^{1+\alpha}$, $a=1/3$, $b=1$, and $h_2=h_1/2$, we can see from Tables \ref{table2.0.1} and  \ref{table2.0.1_1} that
the discrete $L^2$ errors $e=\left\|\,_{0}D_{x}^{\alpha}u-\Phi\right\|$ w.r.t. the stepsize $h_1$ do not decreace with the reducing of $h_1$ and $h_2$.

\begin{figure}[!htbp]
\includegraphics[scale=0.28]{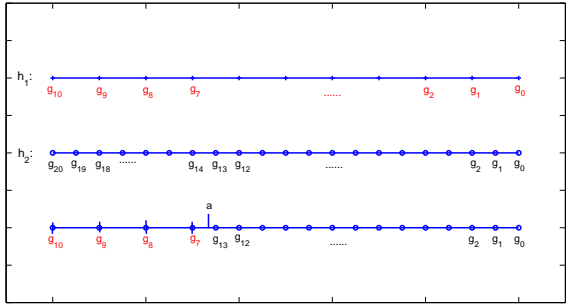}\\
\caption{The non-uniform grids illustrating the intuition of considering the formula of Eq. (\ref{equation2.0.3}) where $h_2=h_1/2$}\label{fig:2.1.1}
\end{figure}

\begin{table}
\caption{The discrete $L^2$ errors of (\ref{equation2.0.3}) w.r.t. the stepsize $h_1$ for $u(x)=x^{1+\alpha}$ with
different $\alpha$ and $h_1$, where $a=1/3$, $b=1$, and $h_2=h_1/2$.}\label{table2.0.1}
\begin{tabular}{ccccccccc}
\hline\noalign{\smallskip}
$\alpha$   &\multicolumn{2}{c}{$1.2$} &\multicolumn{2}{c}{$1.4$} &\multicolumn{2}{c}{$1.6$} &\multicolumn{2}{c}{$1.8$}\\
\noalign{\smallskip}\hline\noalign{\smallskip}
$h_1$& $e$& rate & $e$ & rate & $e$ & rate & $e$ & rate \\
\noalign{\smallskip}\hline\noalign{\smallskip}
1/10 &5.02 1e-1 &-     &9.16 1e-1 &-     &1.63 1e+0 &-     &2.82 1e+0 & -   \\
1/20 &8.42 1e-1 &-0.75 &1.74 1e+0 &-0.92 &3.49 1e+0 &-1.10 &6.87 1e+0 &-1.28 \\
1/40 &1.64 1e+0 &-0.96 &3.91 1e+0 &-1.17 &9.08 1e+0 &-1.38 &2.06 1e+1 &-1.59 \\
1/80 &2.66 1e+0 &-0.70 &7.25 1e+0 &-0.89 &1.93 1e+1 &-1.09 &5.01 1e+1 &-1.28 \\
1/160&4.50 1e+0 &-0.76 &1.41 1e+1 &-0.96 &4.31 1e+1 &-1.16 &1.29 1e+2 &-1.37 \\
\noalign{\smallskip}\hline
\end{tabular}
\end{table}

\begin{table}
\caption{The discrete $L^2$ errors of (\ref{equation2.0.3_1}) w.r.t. the stepsize $h_1$ for $u(x)=x^{1+\alpha}$ with
different $\alpha$ and $h_1$, where $a=1/3$, $b=1$, and $h_2=h_1/2$.}\label{table2.0.1_1}
\begin{tabular}{ccccccccc}
\hline\noalign{\smallskip}
$\alpha$   &\multicolumn{2}{c}{$1.2$} &\multicolumn{2}{c}{$1.4$} &\multicolumn{2}{c}{$1.6$} &\multicolumn{2}{c}{$1.8$}\\
\noalign{\smallskip}\hline\noalign{\smallskip}
$h_1$& $e$& rate & $e$ & rate & $e$ & rate & $e$ & rate \\
\noalign{\smallskip}\hline\noalign{\smallskip}
1/10 &5.59 1e-1 &-     &9.95 1e-1 &-     &1.74 1e+0 &-     &2.95 1e+0 & -   \\
1/20 &1.03 1e+0 &-0.89 &1.93 1e+0 &-0.96 &3.56 1e+0 &-1.03 &6.45 1e+0 &-1.11 \\
1/40 &1.65 1e+0 &-0.67 &3.92 1e+0 &-1.02 &9.09 1e+0 &-1.36 &2.07 1e+1 &-1.68 \\
1/80 &2.55 1e+0 &-0.63 &6.24 1e+0 &-0.67 &1.50 1e+1 &-0.72 &3.59 1e+1 &-0.80 \\
1/160&4.50 1e+0 &-0.82 &1.41 1e+1 &-1.18 &4.31 1e+1 &-1.52 &1.29 1e+2 &-1.85 \\
\noalign{\smallskip}\hline
\end{tabular}
\end{table}

Now we dig out the reason of non-convergence of (\ref{equation2.0.3}). In fact, if we rewrite $u(x)$ as the sum of $\tilde{u}_1(x)$ and $\tilde{u}_2(x)$, where
\begin{equation}\label{equation2.0.4}
\tilde{u}_1(x)=\left\{ \begin{array}{ll}
u(x),& 0<x\leq a,\\
0,& a<x\leq b,\\
\end{array} \right.
~~~~~
\tilde{u}_2(x)=\left\{ \begin{array}{ll}
0,& 0<x\leq a,\\
u(x),& a<x\leq b,\\
\end{array} \right.
\end{equation}
and approximate $\tilde{u}_1(x)$ and $\tilde{u}_2(x)$ by the Gr\"{u}nwald formulae with stepsize $h_1$ and $h_2$, respectively, then the discretization scheme (\ref{equation2.0.3}) is obtained. However, it can be noticed that both $\tilde{u}_1(x)$ and $\tilde{u}_2(x)$ are not even continuous, which violates the regularity requirements, being given in (\ref{equation2.0.1}), for the function to be approximated.


\begin{remark}
It should be clarified that for the errors $e=\left|\,_{0}D_{x}^{\alpha}u-\Phi\right|$ at one point
$x=b$, we can observe their convergence behaviors. See Table \ref{table2.0.2}. The following analysis illustrates the reason.

Since
\begin{equation}
g_k^{(\alpha)}=(-1)^k\left( \begin{array}{c}\alpha \\ k\end{array} \right )=\frac{\Gamma(k-\alpha)}{\Gamma(k+1)},
\end{equation}
by using Stiring's formula $\Gamma(x+1)\sim (2\pi)^{\frac{1}{2}}x^{x+\frac{1}{2}}e^{-x}$ as $x\rightarrow \infty$, there is
$g_k^{(\alpha)}\sim k^{-\alpha-1}$
as $k\rightarrow \infty$. Then
\begin{eqnarray}
&&\,_{0}D_{x}^{\alpha}u(x_{mn})-\Phi_{mn}
\nonumber\\
&=& h_2^{-\alpha}\sum_{k=0}^{mn}g_k^{(\alpha)}u\left((mn-k)h_2\right)+O(h_2)
\nonumber\\
&&-h_2^{-\alpha}\sum_{k=0}^{mn-[\frac{a}{h_2}]-1}g_k^{(\alpha)}u\left((mn-k)h_2\right)
-h_1^{-\alpha}\sum_{k=n-[\frac{a}{h_1}]}^{n}g_k^{(\alpha)}u\left((n-k)h_1\right)
\nonumber\\
&=&h_2^{-\alpha}\sum_{k=mn-[\frac{a}{h_2}]}^{mn}g_k^{(\alpha)}u\left((mn-k)h_2\right)
-h_1^{-\alpha}\sum_{k=n-[\frac{a}{h_1}]}^{n}g_k^{(\alpha)}u\left((n-k)h_1\right)
\nonumber\\
&& +O(h_2)
\nonumber\\
&=&h_2^{-\alpha}\sum_{k=n-[\frac{a}{h_1}]}^{n}
\left[ \sum_{q=0}^{m-1}g_{km+q}^{(\alpha)}u\left(((n-k)h_1-q h_2)\right)- m^{-\alpha} g_k^{(\alpha)}u\left((n-k)h_1\right)\right]
\nonumber\\
&& +O(h_2)
\nonumber\\
&\sim&h_2^{-\alpha}\sum_{k=n-[\frac{a}{h_1}]}^{n}
\Bigg[ \sum_{q=0}^{m-1}(km+q)^{-\alpha-1}u\left(((n-k)h_1-q h_2)\right)
\nonumber\\
&&
~~~~~~~~~~~~~~~~~~~~-(mk)^{-\alpha} k^{-1}u\left((n-k)h_1\right)\Bigg]+O(h_2) \nonumber
\end{eqnarray}
\begin{eqnarray}
&\sim&\sum_{k=n-[\frac{a}{h_1}]}^{n}(k h_1)^{-\alpha} k^{-1}
\left[m^{-1} \sum_{q=0}^{m-1}u\left(((n-k)h_1-q h_2)\right)-u\left((n-k)h_1\right)\right]
\nonumber\\
&& +O(h_2)
\nonumber\\
&\sim&\sum_{k=n-[\frac{a}{h_1}]}^{n}k^{-1}O(h_2)+O(h_2)
\sim O(h_2),
\end{eqnarray}
for large $k$. This means that if the dividing point $a$ is far away from the node $x_{mn}$ at which we approximate $\,_{0}D_{x}^{\alpha}u(x_{mn})$ by (\ref{equation2.0.3}), then the error $e=\left|\,_{0}D_{x}^{\alpha}u(x_{mn})-\Phi_{mn}\right|$
might have the first convergence order. However, this explanation does not hold when $k$ is small. That is the reason why the discrete $L^2$ error does not converge with the decreasing of $h_1$ and $h_2$ from another point of view.

\begin{table}
\caption{The errors $e=\left|\,_{0}D_{x}^{\alpha}u-\Phi\right|$ at $x=1$
by using the approximation (\ref{equation2.0.3}) for $u(x)=x^{1+\alpha}$ with different $\alpha$
and $h_1$, where $a=1/3$, and $h_2=h_1/2$.}\label{table2.0.2}
\begin{tabular}{ccccccccc}
\hline\noalign{\smallskip}
$\alpha$   &\multicolumn{2}{c}{$1.2$} &\multicolumn{2}{c}{$1.4$} &\multicolumn{2}{c}{$1.6$} &\multicolumn{2}{c}{$1.8$}\\
\noalign{\smallskip}\hline\noalign{\smallskip}
$h_1$& $e$& rate & $e$ & rate & $e$ & rate & $e$ & rate\\
\noalign{\smallskip}\hline\noalign{\smallskip}
1/10 &7.11 1e-2 &-    &1.02 1e-1 &-    &1.46 1e-1 &-    &2.09 1e-1 & -   \\
1/20 &3.68 1e-2 &0.95 &5.28 1e-2 &0.94 &7.50 1e-2 &0.96 &1.06 1e-1 & 0.98\\
1/40 &1.78 1e-2 &1.05 &2.55 1e-2 &1.05 &3.65 1e-2 &1.04 &5.24 1e-2 & 1.02\\
1/80 &9.19 1e-3 &0.95 &1.32 1e-2 &0.95 &1.87 1e-2 &0.96 &2.65 1e-2 & 0.98\\
1/160&4.45 1e-3 &1.05 &6.37 1e-3 &1.05 &9.13 1e-3 &1.04 &1.31 1e-2 & 1.02 \\
\noalign{\smallskip}\hline
\end{tabular}
\end{table}

\end{remark}

For overcoming the above challenge, a natural idea is to smooth the functions $\tilde{u}_1(x)$ and $\tilde{u}_2(x)$, such that they possess the regularities as well as their predecessor $u(x)$. Firstly, we introduce some properties of the mollificator, which plays a key role in our further discussion.


\subsection{Mollification}\label{subsec:2.1}

In this subsection, we introduce definitions on some special one-dimensional functions, such as smooth function, characteristic function and smooth truncated function, and some properties of these functions.

\begin{definition}(\cite{Evans:10}) \label{defi:2.1.1}
Define $\rho(x)\in \mathbf{C}^{\infty}(\mathbb{R})$ by
\begin{equation}\label{equation2.1.1}
\rho(x):=\left\{ \begin{array}{ll}
C^{-1}e^{\frac{1}{x^2-1}} &~~~{\rm if}~~ |x|<1\\
0 &~~~{\rm if}~~ |x|\geq 1,
\end{array} \right.
\end{equation}
where the constant $C$ is selected as $\int_{-1}^1 e^{\frac{1}{x^2-1}}$, so that
\begin{equation}\label{equation2.1.2}
\int_{\mathbb{R}}\rho(x) dx=\int_{-1}^{1}\rho(x)dx=1.
\end{equation}
\end{definition}

By (\ref{equation2.1.2}) and because of $\rho(x)$ being an even function, i.e., $\rho(x)=\rho(-x)$ for all $x\in \mathbb{R}$, there is
\begin{eqnarray}\label{equation2.1.add}
\int_{-1}^{x}\rho(z)dz&=&\int_{-x}^1\rho(z)dz
\nonumber\\
&=&\int_{-1}^{1}\rho(z)dz-\int_{-1}^{-x}\rho(z)dz
\nonumber\\
&=&1-\int_{-1}^{-x}\rho(z)dz.
\end{eqnarray}

\begin{definition}(\cite{Evans:10}) \label{defi:2.1.2}
If $f:U \rightarrow \mathbb{R}$ is locally integrable, where $U \subset \mathbb{R}$ is open, define its mollification
\begin{equation}\label{equation2.1.3}
(J_\epsilon f)(x):=\int_{-1}^1 \rho(z)f(x-\epsilon z)dz
=\int_{-1}^1 \rho(z)f(x+\epsilon z)dz ~~~{\rm in~~} U_{\epsilon},
\end{equation}
where $U_{\epsilon}=\{x\in U|{\rm dist}(x,\partial U)>\epsilon\}$. The second equality of (\ref{equation2.1.3}) holds because of $\rho(x)$ being an even function. It is well known (see such as Appendix C.4 in \cite{Evans:10}) that $(J_\epsilon f)(x)\in C^{\infty}(U_\epsilon)$.
\end{definition}

If $\Omega \subseteq \mathbb{R}$ is open, $\epsilon>0$, write $\Omega_1:=\{x|{\rm dist}(x, \Omega)<\epsilon\}$, $\Omega_2:=\{x|{\rm dist}(x, \Omega)<2\epsilon\}$.

\begin{definition}(\cite{Evans:10})\label{defi:2.1.3}
Define the characteristic function of $\Omega_1$ as
\begin{equation}\label{equation2.1.4}
\chi_{\substack{\\\Omega_1}}(x):=\left\{ \begin{array}{ll}
1 &~~ x \in \Omega_1\\
0 &~~ x \in \mathbb{R}\backslash \Omega_1;
\end{array} \right.
\end{equation}
and define its mollification $\eta(x):=J_{\epsilon}\chi_{\substack{\\\Omega_1}}(x)$, naming as the smooth truncated function of $\Omega$.
\end{definition}

By the above definitions, we can see that the truncated function has the following properties.

\begin{lemma}\label{lemma:2.1.1}
The smooth truncated function of $\Omega$ belongs to $C_{0}^{\infty}(\mathbb{R})$, and
\begin{equation}\label{equation2.1.5}
\eta(x)=\left\{ \begin{array}{ll}
\int_{-1}^{1-d(x)/\epsilon} \rho(z)dz &~~~x\in \Omega_2 \\
0 &~~~x\in \mathbb{R}\backslash \Omega_2,
\end{array} \right.
\end{equation}
where $d(x)={\rm dist}(x, \Omega)$.
\end{lemma}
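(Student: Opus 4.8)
The plan is to verify the two alternative expressions for $\eta(x)$ by unfolding the definition $\eta(x) = J_\epsilon \chi_{\Omega_1}(x)$ and tracking the geometry of the sets $\Omega_1$, $\Omega_2$. First I would recall from Definition \ref{defi:2.1.2} and its following remark that mollification of a locally integrable function always lies in $C^\infty$ on the interior where it is defined; since $\chi_{\Omega_1}$ is locally integrable on all of $\mathbb{R}$ and $\mathbb{R}$ is open, $\eta \in C^\infty(\mathbb{R})$. To get $\eta \in C_0^\infty(\mathbb{R})$ it remains to check that $\eta$ has compact support, which follows from the second (vanishing) case of \eqref{equation2.1.5} together with boundedness of $\Omega_2$ when $\Omega$ is bounded; more to the point, $\mathrm{supp}\,\eta \subseteq \Omega_2$ because the integrand $\rho(z)\chi_{\Omega_1}(x+\epsilon z)$ is identically zero in $z\in[-1,1]$ whenever $\mathrm{dist}(x,\Omega_1) \ge \epsilon$, i.e. whenever $\mathrm{dist}(x,\Omega) \ge 2\epsilon$, i.e. $x \notin \Omega_2$.

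Next I would establish the explicit formula on $\Omega_2$. Fix $x \in \Omega_2$ and write $d(x) = \mathrm{dist}(x,\Omega)$. Using the form $\eta(x) = \int_{-1}^1 \rho(z)\,\chi_{\Omega_1}(x + \epsilon z)\,dz$, the key point is to identify the set of $z \in [-1,1]$ for which $x + \epsilon z \in \Omega_1$, i.e. $\mathrm{dist}(x + \epsilon z, \Omega) < \epsilon$. The main case to handle carefully is when $\Omega$ is (locally) a half-line or interval so that near $x$ the nearest point of $\Omega$ is attained and $z \mapsto \mathrm{dist}(x+\epsilon z,\Omega)$ behaves like an affine function of $z$; then $\mathrm{dist}(x+\epsilon z,\Omega) < \epsilon$ reduces to $z > (d(x) - \epsilon)/\epsilon = -1 + d(x)/\epsilon$ on the relevant side, while on the other side $x + \epsilon z$ is only driven deeper into (or further from) $\Omega$. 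Restricting to $[-1,1]$, the admissible $z$-interval is $(-1 + d(x)/\epsilon, \, 1]$ (when $0 \le d(x) < 2\epsilon$), so that $\eta(x) = \int_{-1 + d(x)/\epsilon}^{1}\rho(z)\,dz$, which by the evenness identity \eqref{equation2.1.add} equals $\int_{-1}^{1 - d(x)/\epsilon}\rho(z)\,dz$ — exactly the stated first branch. For $x \in \Omega$ itself $d(x) = 0$ and we recover $\eta(x) = \int_{-1}^1\rho(z)\,dz = 1$, consistent with $\Omega_1 \supseteq \Omega$ and $\chi_{\Omega_1}\equiv 1$ on a neighbourhood of $x$.

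The main obstacle I expect is the geometric bookkeeping in the step above: in full generality $\Omega$ is merely an arbitrary open subset of $\mathbb{R}$, so $z \mapsto \mathrm{dist}(x+\epsilon z,\Omega)$ need not be monotone or piecewise affine, and the set $\{z : x+\epsilon z\in\Omega_1\}$ need not be an interval. The honest fix is to observe that for the application in this paper $\Omega$ is an interval (or half-line) $[a,b]$-type region, for which the distance function is $1$-Lipschitz and piecewise affine with slope $\pm 1$, making the level-set computation exact; alternatively one restricts attention to $x$ with a unique nearest point in $\overline\Omega$, which is all that is needed for \eqref{equation2.1.5} as used later. I would state the lemma (as the authors implicitly do) under the interval hypothesis, so that the remaining verification is the short computation sketched above plus the symmetry rewrite via \eqref{equation2.1.add}; the $C_0^\infty$ claim then drops out from the support inclusion $\mathrm{supp}\,\eta \subseteq \Omega_2$ and the $C^\infty$ property of mollifications.
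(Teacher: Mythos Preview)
Your proposal is correct and follows essentially the same approach as the paper: both reduce to the interval case $\Omega=(a,b)$, identify the sub-interval of $z\in[-1,1]$ on which $\chi_{\Omega_1}(x\pm\epsilon z)=1$, and exploit the evenness of $\rho$. The only cosmetic difference is that the paper handles the two boundary sides $x>b$ and $x<a$ by switching between the $x+\epsilon z$ and $x-\epsilon z$ representations of the mollification, whereas you compute one side explicitly (obtaining $\int_{-1+d(x)/\epsilon}^{1}\rho(z)\,dz$) and then invoke the symmetry identity \eqref{equation2.1.add} to rewrite it as $\int_{-1}^{1-d(x)/\epsilon}\rho(z)\,dz$; these are equivalent uses of the same even-function symmetry.
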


\begin{proof}
Without loss of generality, we take $\Omega=(a,b)$.
By the above definitions,
\begin{equation}\label{equation2.1.6}
\eta(x)=\int_{-1}^1 \rho(z) \chi_{\substack{\\\Omega_1}}(x-\epsilon z)dz
=\int_{-1}^1 \rho(z) \chi_{\substack{\\\Omega_1}}(x+\epsilon z)dz.
\end{equation}
\begin{enumerate}[a)]
\item
If $x\in \Omega$, then $x+\epsilon z \in \Omega_1$ for $|z|\leq 1$, and $\chi_{\substack{\\\Omega_1}}(x+\epsilon z)=1$. Thus, by (\ref{equation2.1.6}) and (\ref{equation2.1.2}),
\begin{equation}
\eta(x)=\int_{-1}^1 \rho(z)dz=\int_{-1}^{1-d(x)/\epsilon} \rho(z)dz=1.
\end{equation}

\item
If $x\in \Omega_2 \backslash \Omega$ and  $x=b+r\epsilon$, $0\leq r<2$,
then $x+\epsilon z=b+(r+z)\epsilon$. When $r+z<1$, i.e., $z<1-r$, we have $x+\epsilon z \in \Omega_1$, and $\chi_{\substack{\\\Omega_1}}(x+\epsilon z)=1$; else, $\chi_{\substack{\\\Omega_1}}(x+\epsilon z)=0$.
Thus, by (\ref{equation2.1.6}),
\begin{equation}
\eta(x)=\int_{-1}^{1-r} \rho(z)dz=\int_{-1}^{1-\frac{x-b}{\epsilon}} \rho(z)dz
=\int_{-1}^{1-d(x)/\epsilon} \rho(z)dz.
\end{equation}

\item
If $x\in \Omega_2 \backslash \Omega$ and $x=a-r\epsilon$, $0\leq r<2$,
then $x-\epsilon z=a-(r+z)\epsilon$. When $r+z<1$, i.e., $z<1-r$, we have $x-\epsilon z \in \Omega_1$, and $\chi_{\substack{\\\Omega_1}}(x-\epsilon z)=1$; else, $\chi_{\substack{\\\Omega_1}}(x-\epsilon z)=0$.
Thus, by (\ref{equation2.1.6}),
\begin{equation}
\eta(x)=\int_{-1}^{1-r} \rho(z)dz=\int_{-1}^{1-\frac{a-x}{\epsilon}} \rho(z)dz
=\int_{-1}^{1-d(x)/\epsilon} \rho(z)dz.
\end{equation}

\item
If $x\in \mathbb{R}\backslash \Omega_2$, then $x+\epsilon z \notin \Omega_1$ for $|z|\leq 1$, and $\chi_{\substack{\\\Omega_1}}(x+\epsilon z)=0$. Thus, by (\ref{equation2.1.6}), $\eta(x)=0$.
\end{enumerate}
\end{proof}


Given a function $v(x):[0,b]\rightarrow \mathbb{R}$ for some $b>0$, and without ambiguity, after zero extending its definition to $\mathbb{R}$, we still denote the function as $v(x)$.

For fixed values $a>0$, $\epsilon>0$, with $a+2\epsilon<b$. Let $\eta_1(x)$ be the smooth truncated function of open set $(0,a)$, and $\eta_2(x)$ be the one of $(a+2\epsilon,b)$.
By Lemma \ref{lemma:2.1.1}, it is easy to see that
\begin{equation}\label{equation2.2.1}
v_1(x):=\eta_1(x)v(x)=\left\{ \begin{array}{ll}
v(x)&~~x\in(0,a)\\
v(x)\int_{-1}^{1-\frac{x-a}{\epsilon}}\rho(z)dz &~~ x \in [a,a+2\epsilon)\\
0 &~~ {\rm else};
\end{array} \right.
\end{equation}

\begin{equation}\label{equation2.2.2}
v_2(x):=\eta_2(x)v(x)=\left\{ \begin{array}{ll}
v(x)\int_{-1}^{\frac{x-a}{\epsilon}-1}\rho(z)dz&~~x\in(a,a+2\epsilon]\\
v(x) &~~ x \in (a+2\epsilon,b)\\
0 &~~ {\rm else}.
\end{array} \right.
\end{equation}

For the convenience of the following presentation, we define
the smooth operators $\mathcal{M}_j^{a,\epsilon}:
\mathbf{C}(\Omega)\rightarrow \mathbf{C}(\Omega)$, $j=1,2$, as
\begin{eqnarray}\label{equation2.2.3}
\mathcal{M}_j^{a,\epsilon}v(x):=v_j(x) ~~~\forall v\in \mathbf{C}(\Omega),~j=1,2,
\end{eqnarray}
where $v_j(x)$ are defined in (\ref{equation2.2.1}) and (\ref{equation2.2.2}), $\mathbf{C}(\Omega)$ denotes the space of  continuous functions defined on $\Omega$.
Obviously, there is
\begin{eqnarray}
\mathcal{M}_j^{a,\epsilon}\left(c v\right)(x)
&=&c\mathcal{M}_j^{a,\epsilon}v(x),
~~~\forall c\in \mathbb{R}, v\in \mathbf{C}(\Omega),~j=1,2;\label{equation2.2.5}
\\
\mathcal{M}_j^{a,\epsilon}\left(u+v\right)(x)&=&
\mathcal{M}_j^{a,\epsilon}u(x)+\mathcal{M}_j^{a,\epsilon}v(x), ~~~\forall u,v\in \mathbf{C}(\Omega),~j=1,2.\label{equation2.2.4}
\end{eqnarray}

\begin{lemma}\label{lemma:2.2.1}
If $v(x)\in \mathbf{C}(\Omega)$, $\Omega \subset \mathbb{R}$, then $\mathcal{M}_1^{a,\epsilon}v(x)+\mathcal{M}_2^{a,\epsilon}v(x)=v(x)$ for all $x\in \Omega$, and both regularities of $\mathcal{M}_1^{a,\epsilon}v(x)$ and $\mathcal{M}_2^{a,\epsilon}v(x)$ are as high as the one of $v(x)$.
\end{lemma}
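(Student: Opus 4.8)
The plan is to verify the two assertions separately, both of which reduce to elementary manipulations with the smooth truncated functions $\eta_1$ and $\eta_2$ introduced above. For the additivity claim, I would start from the definitions $\mathcal{M}_1^{a,\epsilon}v(x)=\eta_1(x)v(x)$ and $\mathcal{M}_2^{a,\epsilon}v(x)=\eta_2(x)v(x)$, so that it suffices to show $\eta_1(x)+\eta_2(x)=1$ for every $x\in\Omega$. Here $\eta_1$ is the smooth truncated function of the open set $(0,a)$ and $\eta_2$ is the one of $(a+2\epsilon,b)$; by Lemma \ref{lemma:2.1.1}, the nontrivial (non-$0$, non-$1$) behaviour of $\eta_1$ occurs only on $[a,a+2\epsilon)$ and that of $\eta_2$ only on $(a,a+2\epsilon]$. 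Thus the verification splits into three regions: on $(0,a]$ we have $\eta_1\equiv 1$ and $\eta_2\equiv 0$; on $[a+2\epsilon,b)$ we have $\eta_1\equiv 0$ and $\eta_2\equiv 1$; and on the overlap $[a,a+2\epsilon]$ the explicit formulas give $\eta_1(x)=\int_{-1}^{1-(x-a)/\epsilon}\rho(z)\,dz$ and $\eta_2(x)=\int_{-1}^{(x-a)/\epsilon-1}\rho(z)\,dz$.

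On the overlap region the key identity is exactly the symmetry relation \eqref{equation2.1.add}: writing $t=(x-a)/\epsilon-1\in[-1,1]$, we have $\eta_2(x)=\int_{-1}^{t}\rho(z)\,dz$ and $\eta_1(x)=\int_{-1}^{-t}\rho(z)\,dz$, so $\eta_1(x)+\eta_2(x)=\int_{-1}^{-t}\rho(z)\,dz+\int_{-1}^{t}\rho(z)\,dz=1$ by \eqref{equation2.1.add} (equivalently, by the evenness of $\rho$ and the normalization \eqref{equation2.1.2}). Combining the three regions gives $\eta_1+\eta_2\equiv 1$ on $\Omega$, hence $\mathcal{M}_1^{a,\epsilon}v+\mathcal{M}_2^{a,\epsilon}v=(\eta_1+\eta_2)v=v$ pointwise on $\Omega$.

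For the regularity claim, I would invoke Lemma \ref{lemma:2.1.1}, which already states $\eta_1,\eta_2\in C_0^\infty(\mathbb{R})$; multiplication of $v$ by a $C^\infty$ function cannot lower its regularity, so for any of the usual smoothness scales (continuity, $C^k$, $C^\infty$, or the mixed conditions $u\in C^{l+\lfloor\alpha\rfloor-1}$ with $D^{l+\lfloor\alpha\rfloor}u\in L^1$ used later) the Leibniz rule shows that $\eta_j v$ inherits the property of $v$ on $\Omega$. One should note the only place regularity could be "created or destroyed" is at the boundary points $x=0$ and near $a,a+2\epsilon$: near $a$ and $a+2\epsilon$ the cutoff is $C^\infty$ so nothing changes, and at $x=0$ both $v$ and $\eta_1 v$ are defined by zero-extension in the same way, so the statement is about $v$ on $\Omega=(0,b)$ (or its closure as needed) and no artificial singularity is introduced — this is precisely the defect that the naive splitting \eqref{equation2.0.4} suffered from and that the mollification repairs.

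The main obstacle is essentially bookkeeping rather than mathematical depth: one must be careful that the overlap intervals for $\eta_1$ and $\eta_2$ are stated with the correct half-open conventions so that the three-region decomposition of $\Omega$ is a genuine partition, and that the distance functions $d(x)=\mathrm{dist}(x,(0,a))$ and $\mathrm{dist}(x,(a+2\epsilon,b))$ are evaluated correctly (giving $x-a$ and $a+2\epsilon-x$ respectively on the overlap), so that the two integrands in \eqref{equation2.1.5} really are reflections of one another through the change of variable $z\mapsto -z$. Once the arguments of $\rho$ are matched up, \eqref{equation2.1.add} closes the argument immediately. The regularity half of the statement is then a one-line consequence of $\eta_j\in C_0^\infty$ and the product rule.
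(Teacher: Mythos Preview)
Your proposal is correct and follows essentially the same approach as the paper: both arguments reduce the additivity to the symmetry identity \eqref{equation2.1.add} (you make the three-region split and the substitution $t=(x-a)/\epsilon-1$ explicit, whereas the paper simply cites \eqref{equation2.1.add} together with the definitions \eqref{equation2.2.1}--\eqref{equation2.2.2}), and both obtain the regularity claim from $\eta_1,\eta_2\in C_0^\infty(\mathbb{R})$ via Lemma~\ref{lemma:2.1.1} and the product rule. Your write-up is more detailed but not different in substance.
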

\begin{proof}
By Eq. (\ref{equation2.1.add}) and the definitions of $v_1(x)$ and $v_2(x)$ in (\ref{equation2.2.1}) and (\ref{equation2.2.2}), it is clear that $v_1(x)+v_2(x)=v(x)$.
From Lemma \ref{lemma:2.1.1}, the smooth truncated functions $\eta_1(x)$ and $\eta_2(x)$ belong to $C_0^{\infty}(\mathbb{R})$, thus the regularities of $v_1(x)$ as well as $v_2(x)$ are no lower than the one of $v(x)$ for all $x\in \mathbb{R}$.

After restricting the domain onto $\Omega$, it is clear by (\ref{equation2.2.3}) that $\mathcal{M}_1^{a,\epsilon}v(x)+\mathcal{M}_2^{a,\epsilon}v(x)=v(x)$ for all $x\in \Omega$, also
the regularities of $\mathcal{M}_j^{a,\epsilon}v(x)$, $j=1,2$ are as high as the one of $v(x)$.
\end{proof}

\begin{remark}\label{remark:2.2.1}
In real computation, the integration $\int_{-1}^{1-\frac{x-a}{\epsilon}}\rho(z)dz$ for any
given $x$ is numerically calculated by the Gauss quadrature with 32 Gauss nodes.
Experiments show that $\max_{0\leq x \leq b}|\hat{v}_1(x)+\hat{v}_2(x)-v(x)|=C\cdot 10^{-8}$, where $\hat{v}_1(x)$ and
$\hat{v}_2(x)$ are respectively the approximations of $v_1(x)$ and $v_2(x)$ numerically calculated by this Gauss quadrature.
\end{remark}

Figures \ref{fig:2.2.1} and \ref{fig:2.2.2} are illustrations of these two smooth truncations $\mathcal{M}_1^{a,\epsilon}u(x)$ and $\mathcal{M}_2^{a,\epsilon}u(x)$ for function $u(x)=x^{1.5}$, comparing to functions $\tilde{u}_1(x)$ and $\tilde{u}_2(x)$ defined in (\ref{equation2.0.4}), with $a=0.4$, $b=1$, and $\epsilon=0.1$.
\begin{figure}[!htbp]
\includegraphics[scale=0.4]{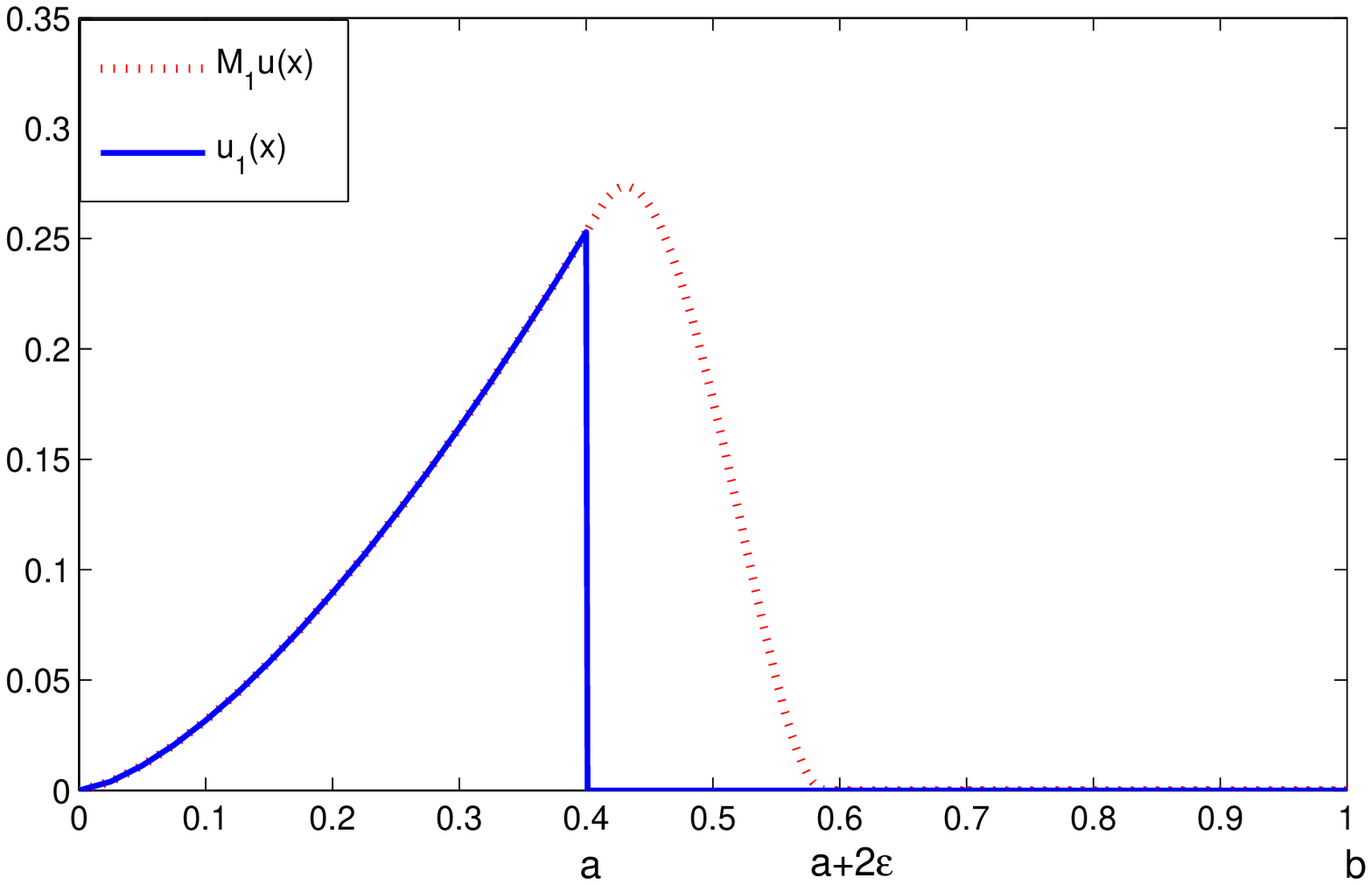}\\
\caption{The comparison between $\mathcal{M}_1^{a,\epsilon}u(x)$ (appeared as $M_1u(x)$ in the legend) and $\tilde{u}_1(x)$ (appeared as ${u}_1(x)$ in the legend) for $u(x)=x^{1.5}$, where $a=0.4$, $b=1$, and $\epsilon=0.1$.}\label{fig:2.2.1}
\includegraphics[scale=0.4]{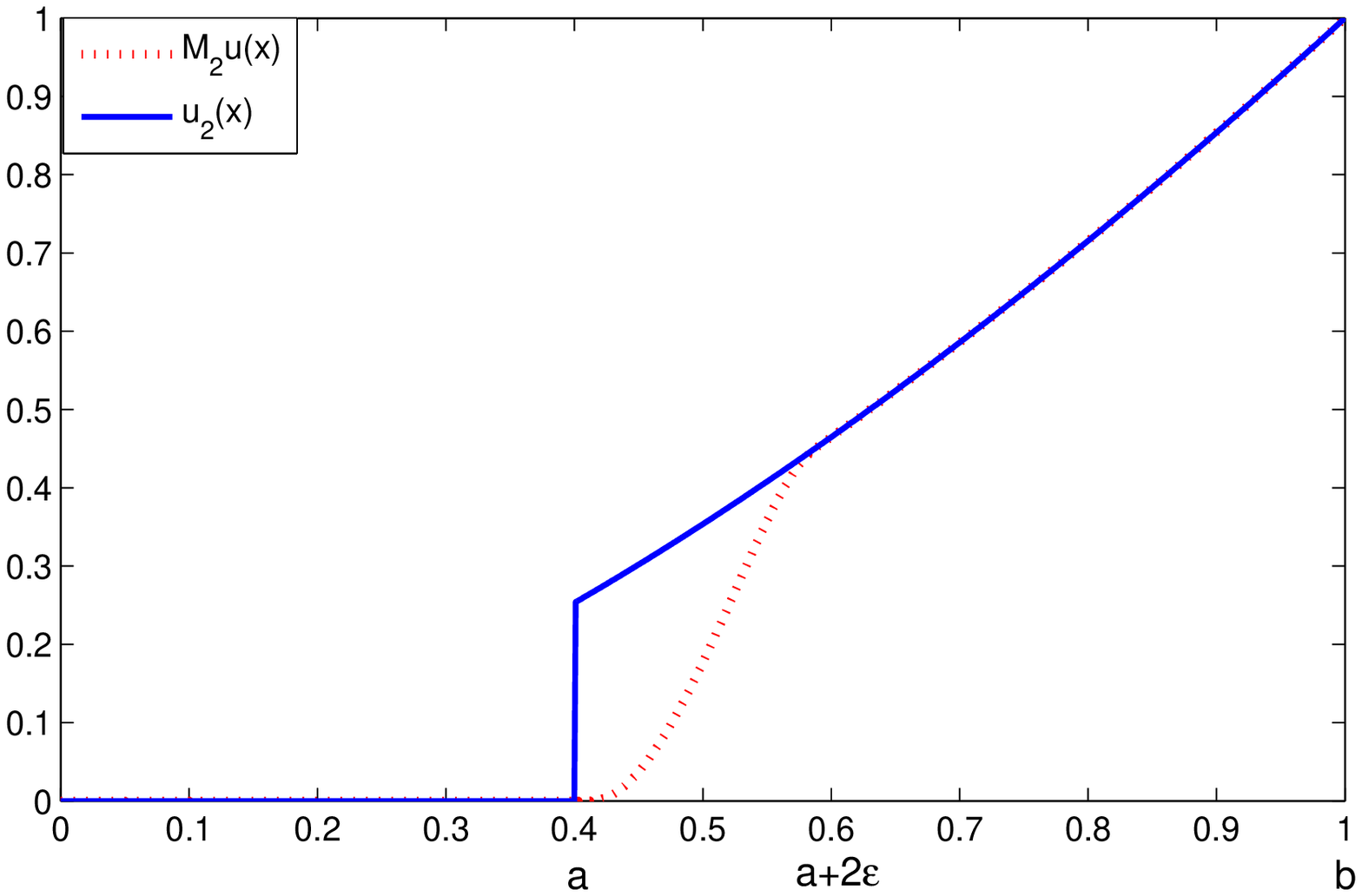}\\
\caption{The comparison between $\mathcal{M}_2^{a,\epsilon}u(x)$ (appeared as $M_2u(x)$ in the legend) and $\tilde{u}_2(x)$ (appeared as ${u}_2(x)$ in the legend) for $u(x)=x^{1.5}$, where $a=0.4$, $b=1$, and $\epsilon=0.1$.}\label{fig:2.2.2}
\end{figure}

Compared with (\ref{equation2.0.3}) or (\ref{equation2.0.3_1}), currently, we can easily get the following convergent discretization
\begin{eqnarray}\label{equation2.2.6}
&&{}_{0}D_{x}^{\alpha}u(x_{mn})
\nonumber\\
&=&{}_{0}D_{x}^{\alpha}u_1(x_{mn})+{}_{0}D_{x}^{\alpha}u_2(x_{mn})
\nonumber\\
&=&h_1^{-\alpha}\sum_{k=0}^{n}g_k^{(\alpha)} u_1((n-k)h_1)+O(h_1)
\nonumber\\
&&+h_2^{-\alpha}\sum_{k=0}^{mn}g_k^{(\alpha)} u_2((mn-k)h_2)+O(h_2),
\end{eqnarray}
or
\begin{eqnarray}\label{equation2.2.6_1}
&&{}_{0}D_{x}^{\alpha}u(x_{mn})
\nonumber\\
&=&{}_{0}D_{x}^{\alpha}u_1(x_{mn})+{}_{0}D_{x}^{\alpha}u_2(x_{mn})
\nonumber\\
&=&h_2^{-\alpha}\sum_{k=0}^{mn}g_k^{(\alpha)} u_1((mn-k)h_2)+O(h_2)
\nonumber\\
&&+h_1^{-\alpha}\sum_{k=0}^{n}g_k^{(\alpha)} u_2((n-k)h_1)+O(h_1).
\end{eqnarray}

More specifically, by Proposition 3.1 of \cite{Tadjeran:06} or Theorem 1 in \cite{Zhao:14},
there is
\begin{eqnarray}
&&{}_{0}D_{x}^{\alpha}u(x_{mn})
\nonumber\\
&=&{}_{0}D_{x}^{\alpha}u_1(x_{mn})+{}_{0}D_{x}^{\alpha}u_2(x_{mn})
\nonumber\\
&=&h_1^{-\alpha}\sum_{k=0}^{n}g_k^{(\alpha)} u_1((n-k)h_1)+
\frac{\alpha}{2}{}_{0}D_{x}^{\alpha+1}u_1(x_{mn})\cdot h_1
\nonumber\\
&&+h_2^{-\alpha}\sum_{k=0}^{mn}g_k^{(\alpha)} u_2((mn-k)h_2)
+\frac{\alpha}{2} {}_{0}D_{x}^{\alpha+1}u_2(x_{mn})\cdot h_2
\nonumber\\
&&+O(h_1^2+h_2^2),
\end{eqnarray}
or
\begin{eqnarray}
&&{}_{0}D_{x}^{\alpha}u(x_{mn})
\nonumber\\
&=&{}_{0}D_{x}^{\alpha}u_1(x_{mn})+{}_{0}D_{x}^{\alpha}u_2(x_{mn})
\nonumber\\
&=&h_2^{-\alpha}\sum_{k=0}^{mn}g_k^{(\alpha)} u_1((mn-k)h_2)+
\frac{\alpha}{2}{}_{0}D_{x}^{\alpha+1}u_1(x_{mn})\cdot h_2
\nonumber\\
&&+h_1^{-\alpha}\sum_{k=0}^{mn}g_k^{(\alpha)} u_2((n-k)h_1)
+\frac{\alpha}{2} {}_{0}D_{x}^{\alpha+1}u_2(x_{mn})\cdot h_1
\nonumber\\
&&+O(h_1^2+h_2^2).
\end{eqnarray}

Tables \ref{table2.2.1} and \ref{table2.2.1_1} shows the numerical discrete $L^2$ errors of (\ref{equation2.2.6}) and (\ref{equation2.2.6_1}) w.r.t. the stepsize $h_1$
for $u=x^{1+\alpha}$ and different $\alpha$, $h_1$, by taking $a=\epsilon=1$, $b=4$, and $h_2=h_1/2$.
It is easy to see that the mollification process does contribute to the approximation.

\begin{table}
\caption{The discrete $L^2$ errors, denoted as $e$, of (\ref{equation2.2.6}) for different $\alpha$ and $h_1$, with $a=\epsilon=1$, $b=4$, $h_2=h_1/2$, and $u(x)=x^{1+\alpha}$.}\label{table2.2.1}
\begin{tabular}{ccccccccc}
\hline\noalign{\smallskip}
$\alpha$   &\multicolumn{2}{c}{$1.2$} &\multicolumn{2}{c}{$1.4$} &\multicolumn{2}{c}{$1.6$} &\multicolumn{2}{c}{$1.8$}\\
\noalign{\smallskip}\hline\noalign{\smallskip}
$h_1$& $e$& rate & $e$ & rate & $e$ & rate & $e$ & rate\\
\noalign{\smallskip}\hline\noalign{\smallskip}
1/10 &2.52 1e-1 &-    &3.03 1e-1 &-    &3.57 1e-1 &-    &3.98 1e-1 & -   \\
1/20 &1.26 1e-1 &1.00 &1.51 1e-1 &1.00 &1.76 1e-1 &1.01 &1.95 1e-1 & 1.03\\
1/40 &6.33 1e-2 &1.00 &7.56 1e-2 &1.00 &8.63 1e-2 &1.03 &8.38 1e-2 & 1.22\\
1/80 &3.17 1e-2 &1.00 &3.78 1e-2 &1.00 &4.30 1e-2 &1.01 &3.98 1e-2 & 1.08\\
1/160&1.59 1e-2 &1.00 &1.89 1e-2 &1.00 &2.15 1e-2 &1.00 &1.96 1e-2 & 1.02 \\
\noalign{\smallskip}\hline
\end{tabular}
\end{table}

\begin{table}
\caption{The discrete $L^2$ errors, denoted as $e$, of (\ref{equation2.2.6_1}) for different $\alpha$ and $h_1$, with $a=\epsilon=1$, $b=4$, $h_2=h_1/2$, and $u(x)=x^{1+\alpha}$.}\label{table2.2.1_1}
\begin{tabular}{ccccccccc}
\hline\noalign{\smallskip}
$\alpha$   &\multicolumn{2}{c}{$1.2$} &\multicolumn{2}{c}{$1.4$} &\multicolumn{2}{c}{$1.6$} &\multicolumn{2}{c}{$1.8$}\\
\noalign{\smallskip}\hline\noalign{\smallskip}
$h_1$& $e$& rate & $e$ & rate & $e$ & rate & $e$ & rate\\
\noalign{\smallskip}\hline\noalign{\smallskip}
1/10 &3.04 1e-1 &-    &3.42 1e-1 &-    &3.81 1e-1 &-    &4.08 1e-1 & -   \\
1/20 &1.52 1e-1 &1.00 &1.70 1e-1 &1.01 &1.88 1e-1 &1.02 &2.00 1e-1 & 1.03\\
1/40 &7.58 1e-2 &1.00 &8.47 1e-2 &1.00 &9.20 1e-2 &1.03 &8.62 1e-2 & 1.21\\
1/80 &3.79 1e-2 &1.00 &4.23 1e-2 &1.00 &4.58 1e-2 &1.01 &4.10 1e-2 & 1.07\\
1/160&1.89 1e-2 &1.00 &2.12 1e-2 &1.00 &2.29 1e-2 &1.00 &2.02 1e-2 & 1.02 \\
\noalign{\smallskip}\hline
\end{tabular}
\end{table}

As a matter of fact, since $u_1(x)$ and $u_2(x)$ are mutually independent, it is certainly possible to approximate them using different schemes, respectively. So, similar to the h-p finite element methods, in our provided finite difference schemes, we can flexibly choose small/big stepsize and low/high order scheme with the change of the regularities of the solution in different parts of the domain.


In the next subsection, we give two kinds of non-uniform meshes and some notations.

\subsection{Interval partitions and some notations}\label{subsec:2.2}

Now we list two kinds of non-uniform meshes; and the solution domain is $[0,b]$, and $0<a<a+2\epsilon<b$.
\begin{enumerate}
\item {Case 1: $h_1 \leq h_2$}

We partition the interval $[0,a]$ into a uniform mesh with the stepsize $h_1=a/(N_1+1)$:
\begin{equation}\label{equation2.3.1}
0=x_0<x_1<\cdots<x_{N_1}<x_{N_1+1}=a,
\end{equation}
where $x_i=ih_1$, $0\leq i\leq [\frac{b}{h_1}]$, and $x_i\in [a,b]$ when $i>N_1+1$; and supposing that $a+2\epsilon$ can be divisible by $h_2:=m h_1$, $m\in \mathbb{N}_{+}$,
we partition the interval $[0,b]$ into a uniform mesh with the stepsize $h_2$:
\begin{equation}\label{equation2.3.2}
0=y_{-\frac{N_1+N_I}{m}}<\cdots<a+2\epsilon=y_{0}<y_1<\cdots<y_{N_2}<y_{N_2+1}=b,
\end{equation}
where $y_j=x_{N_1+N_I+jm}$, $-\frac{N_1+N_I}{m}\leq j\leq N_2+1$; and we partition the interval $[a,a+2\epsilon]$ into a uniform mesh with the stepsize $h_1$:
\begin{equation}\label{equation2.3.3}
a=z_{1}<z_2<\cdots<z_{N_I}=a+2\epsilon,
\end{equation}
where $z_k=x_{N_1+k}$, $1\leq k \leq N_I$.

\item {Case 2: $h_2 \leq h_1$}

We partition the interval $[a+2\epsilon,b]$ into a uniform mesh with the stepsize $h_2=(b-a-2\epsilon)/(N_2+1)$:
\begin{equation}\label{equation2.3.4}
a+2\epsilon=y_0<y_1<\cdots<y_{N_2}<y_{N_2+1}=b,
\end{equation}
where $y_i=ih_2$, $-(N_1+1)m-N_I+1\leq i\leq N_2+1$, and $y_i \in [0,a+2\epsilon]$ when $i<0$; and supposing that $b-a$ can be divisible by $h_1:=m h_2$, $m\in \mathbb{N}_{+}$,
we partition the interval $[0,b]$ into a uniform mesh with the stepsize $h_1$:
\begin{equation}\label{equation2.3.5}
0=x_{0}<x_1<\cdots<x_{N_1}<x_{N_1+1}=a<\cdots<x_{\frac{N_2+N_I}{m}+N_1+1}=b,
\end{equation}
where $x_j=y_{(j-N_1-1)m-N_I+1}$, $0\leq j\leq N_1+1+\frac{N_I+N_2}{m}$; and we partition the interval $[a,a+2\epsilon]$ into a uniform mesh with the stepsize $h_2$:
\begin{equation}\label{equation2.3.6}
a=z_{1}<z_2<\cdots<z_{N_I}=a+2\epsilon,
\end{equation}
where $z_k=y_{k-N_I}$, $1\leq k \leq N_I$.
\end{enumerate}

Denoting $N:=N_1+N_I+N_2+1$, the above grids on the interval $[0,b]$ can be re-labeled on non-uniform meshes as
\begin{eqnarray}\label{equation2.3.7}
&0=\tilde{x}_{0}<\tilde{x}_{1}<\cdots<\tilde{x}_{N_1}<\tilde{x}_{N_1+1}=
a<\cdots<\tilde{x}_{N_1+N_I}
\nonumber\\
&=a+2\epsilon<\tilde{x}_{N_1+N_I+1}<\cdots<\tilde{x}_{N_1+N_I+N_2}<\tilde{x}_{N}=b,
\end{eqnarray}
where
\begin{equation}\label{equation2.3.8}
\tilde{x}_n=\left\{ \begin{array}{lll}
x_n,&~~0\leq n \leq N_1+N_I,\\
y_{n-N_1-N_I},&~~ N_1+N_I+1\leq n \leq N,
\end{array} \right.
\end{equation}
if $h_1\leq h_2$; and
\begin{equation}\label{equation2.3.9}
\tilde{x}_n=\left\{ \begin{array}{lll}
x_n,&~~0\leq n \leq N_1,\\
y_{n-N_1-N_I},&~~ N_1+1\leq n \leq N,
\end{array} \right.
\end{equation}
if $h_2 \leq h_1$.

From Figures \ref{fig:2.3.1} and \ref{fig:2.3.2}, we can have a better understanding of the relations among these partitions with the truncated functions.

\begin{figure}[!htbp]
\includegraphics[scale=0.28]{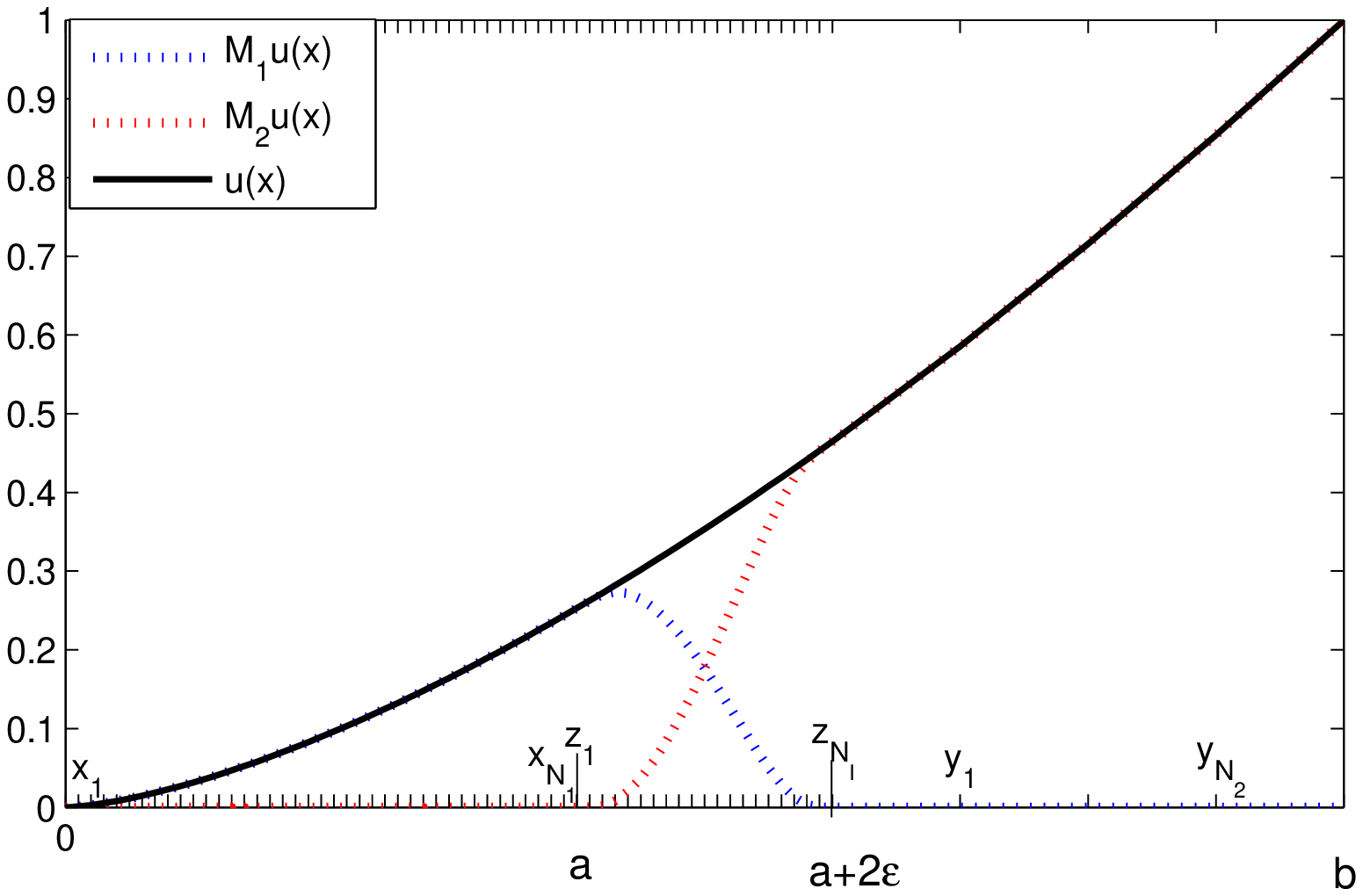}\\
\caption{An illustration of the partitions and the smooth truncations for $u(x)=x^{1.5}$, where $a=0.4$, $b=1$, $\epsilon=0.1$, and $h_1=1/100$, $h_2=1/10$.}\label{fig:2.3.1}
\includegraphics[scale=0.3]{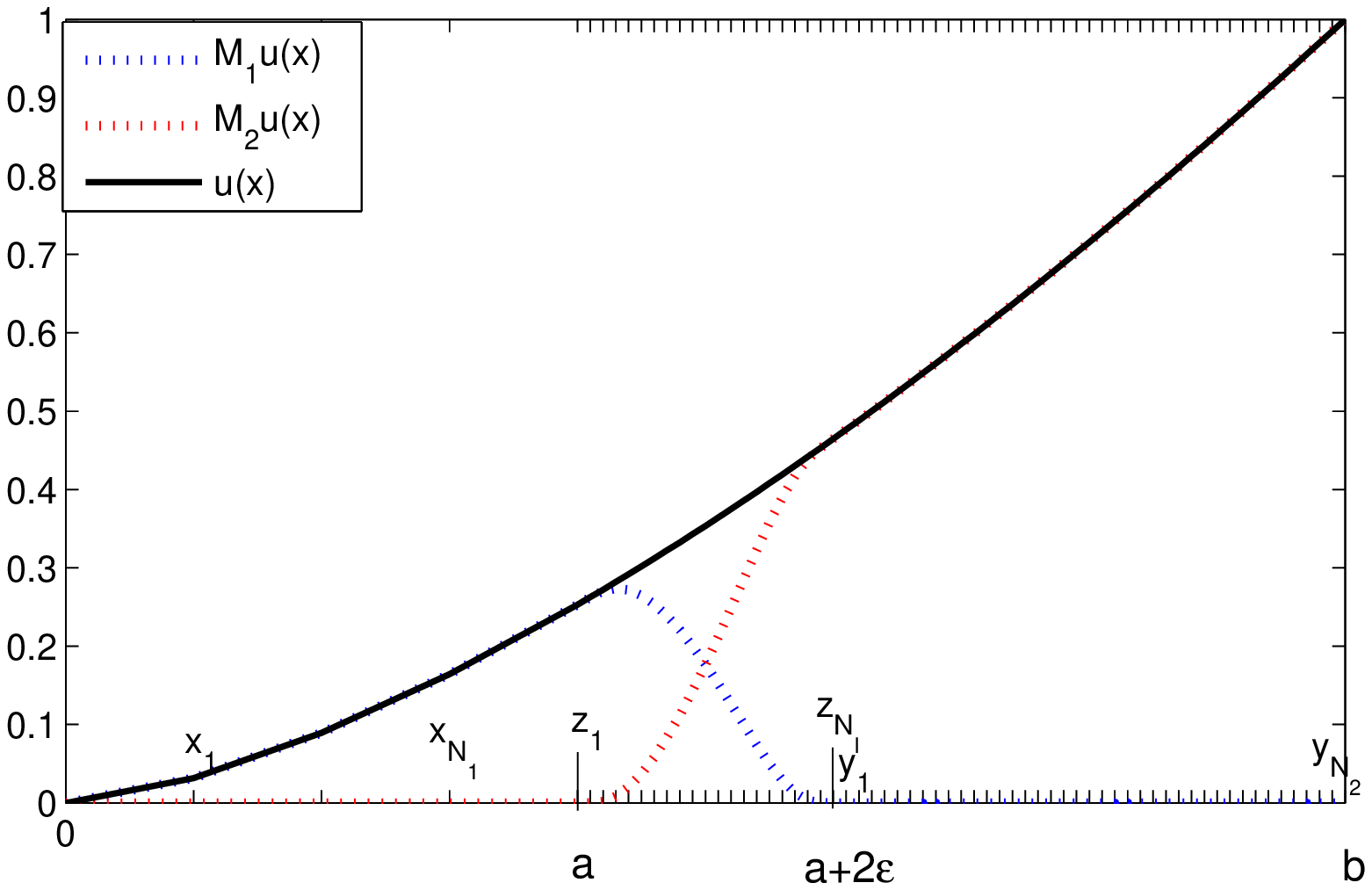}\\
\caption{An illustration of the partitions and the smooth truncations for $u(x)=x^{1.5}$, where $a=0.4$, $b=1$, $\epsilon=0.1$, and $h_2=1/100$, $h_1=1/10$.}\label{fig:2.3.2}
\end{figure}

\subsection{Approximation to the fractional derivative on non-uniform meshes}\label{subsec:2.3}

In this subsection, we derive several approximations to the fractional derivative on non-uniform meshes.

By Theorem $2$ in \cite{Zhao:14}, if $f(x)\in C^{l}[a,b]$, $D^{l+1}u(x)$ $ \in L^1(a,b)$, and $D^{k}u(a)=0,~k=0,1,\cdots,l-1$, then there are two sets of data $\{c_{-1}, c_0, c_1\}$ and $\{d_{-1},d_0,d_1\}$, such that
\begin{eqnarray}\label{equation2.4.1}
&&c_{-1}\,_{0}D_{x}^{\alpha}f(x-h)+c_0\,_{0}D_{x}^{\alpha}f(x)+c_1\,_{0}D_{x}^{\alpha}f(x+h)
\nonumber\\
&=&
d_{-1}\delta_{h,-1}^{\alpha}f(x)+d_{0}\delta_{h,0}^{\alpha}f(x)
+d_{1}\delta_{h,1}^{\alpha}f(x)+O(h^{l})
\end{eqnarray}
holds for any given $l\in \mathbb{N}_+$, where
\begin{equation}\label{equation2.4.2}
\delta^{\alpha}_{h,p}f(x):= \frac{1}{h^\alpha}\sum_{k=0}^{[\frac{x}{h}]+p}g_k^{(\alpha)}f(x-(k-p)h),~~~p\in \mathbb{Z}.
\end{equation}

For the convenience of presentation, we just discuss some selected schemes of (\ref{equation2.4.1}) on non-uniform meshes, where the parameters
$\left\{c_{-1},c_0,c_1\right\}$ are given as $\left\{0,1,0\right\}$ and $l=1,2$,
such as the first order shifted Gr\"{u}nwald  formula, and a family of the second order approximations discussed in \cite{Li:13}.
The process is similar for the other methods.

Firstly, for Case 1 in Subsection \ref{subsec:2.2} where $h_1\leq h_2$, we suppose that function $u(x):[0,b]\rightarrow \mathbb{R}$ for some $b>0$ satisfies: $u(x)\in \mathbf{C}^1[0,b]$, $D^2 u(x)\in L^1(0,b)$, and $D^l u (x)\in \mathbf{C}[\delta,b]$ for any $\delta>0$ and $l>1$. That is, $u(x)$ is sufficiently smooth except at the left endpoint, and $\,_{0}D_{x}^{\alpha}u(x)$, $1<\alpha\leq 2$, can still be approximated with the first order accuracy by combining the shifted Gr\"{u}nwald difference operator
with the techniques introduced in \cite{Chen:14} and \cite{Zhao:14}.
Specifically,  the approximation is given as
\begin{eqnarray}\label{equation2.4.3}
&&\,_{0}D_{x}^{\alpha}u(x)
\nonumber\\
&=&\,_{0}D_{x}^{\alpha}\big[u(x)-u_0+u_0\big]
\nonumber\\
&:=&\,_{0}D_{x}^{\alpha} r(x)+\,_{0}D_{x}^{\alpha}u_0
\nonumber\\
&=&h^{-\alpha}\sum_{k=0}^{[\frac{x}{h}]+p}g_k^{(\alpha)} r\big(x-(k-p)h\big)
+u_0\frac{x^{-\alpha}}{\Gamma(1-\alpha)}+O(h)
\nonumber\\
&=&h^{-\alpha}\sum_{k=0}^{[\frac{x}{h}]+p}g_k^{(\alpha)} \Big[u\big(x-(k-p)h\big)-u_0\Big]
+u_0\frac{x^{-\alpha}}{\Gamma(1-\alpha)}+O(h)
\nonumber
\end{eqnarray}
\begin{eqnarray}
&=&h^{-\alpha}\bigg[\sum_{k=0}^{[\frac{x}{h}]+p}g_k^{(\alpha)} u\big(x-(k-p)h\big)-\sum_{k=0}^{[\frac{x}{h}]+p}g_k^{(\alpha)} u_0\bigg]+u_0\frac{x^{-\alpha}}{\Gamma(1-\alpha)}
\nonumber\\
&&+O(h)
\nonumber\\
&=&h^{-\alpha}\sum_{k=0}^{[\frac{x}{h}]+p}g_k^{(\alpha)} u\big(x-(k-p)h\big)+u_0\bigg[-h^{-\alpha}\sum_{k=0}^{[\frac{x}{h}]+p}g_k^{(\alpha)}
+\frac{x^{-\alpha}}{\Gamma(1-\alpha)}\bigg]
\nonumber\\
&&+O(h),
\end{eqnarray}
where $u_0:=u(0)$, $r(x)=u(x)-u_0$, $r(x)\in \mathbf{C}^1[0,b]$, $D^2 r(x)\in L^1(0,b)$,
also $r(0)=0$.


According to Lemma \ref{lemma:2.2.1}, there exists  $\,_{0}D_{x}^{\alpha}r(x)=\,_{0}D_{x}^{\alpha}r_1(x)+\,_{0}D_{x}^{\alpha}r_2(x)$,
where $r_j(x)=\mathcal{M}_j^{a,\epsilon}r(x)$, $j=1,2$.
Hence, computing $\,_{0}D_{x}^{\alpha}r(x)$ is equivalent to computing $\,_{0}D_{x}^{\alpha}r_1(x)$ and $\,_{0}D_{x}^{\alpha}r_2(x)$. By the regularity assumption of $u(x)$ and Lemma \ref{lemma:2.2.1}, for ensuring accuracy with small computational cost, $\,_{0}D_{x}^{\alpha}r_1(x)$ can be approximated by the first order method (\ref{equation2.4.3}) with fine scale, while $\,_{0}D_{x}^{\alpha}r_2(x)$ by high order methods with coarse scale.


Combining with the non-uniform partitions and notations in Case 1 of Subsection \ref{subsec:2.2}, there are
\begin{eqnarray}\label{equation2.4.4}
&&\,_{0}D_{x}^{\alpha}r_1(x_n)
\nonumber\\
&=&h_1^{-\alpha}\sum_{k=0}^{n}g_k^{(\alpha)} r_{1}(x_{n-k+1})
+O(h_1)
\nonumber\\
&=&h_1^{-\alpha}\sum_{k=0}^{n}g_k^{(\alpha)} u_{1}(x_{n-k+1})
-h_1^{-\alpha}\sum_{k=0}^{n}g_k^{(\alpha)} u_{0,1}(x_{n-k+1})
+O(h_1),
\end{eqnarray}
for $n=1,2,\cdots,N_1+N_I,N_1+N_I+m,\cdots,N_1+N_I+m N_2$, where $x_n=nh_1$, $u_{1}(x)=\mathcal{M}_1^{a,\epsilon}u(x)$,
$u_{0,1}(x)=\mathcal{M}_1^{a,\epsilon}u_0(x)$;

\begin{eqnarray}\label{equation2.4.5}
&&\,_{0}D_{x}^{\alpha}r_2(x_{N_1+N_I+nm})
\nonumber\\
&=&\,_{0}D_{x}^{\alpha}r_2(y_n)
\nonumber\\
&=&h_2^{-\alpha}\sum_{k=0}^{n}w_k^{(2,\alpha)} r_{2}(y_{n-k+1})
+O(h_2^{l_2})
\nonumber\\
&=&h_2^{-\alpha}\sum_{k=0}^{n}w_k^{(2,\alpha)} u_{2}(y_{n-k+1})
-h_2^{-\alpha}\sum_{k=0}^{n}w_k^{(2,\alpha)} u_{0,2}(y_{n-k+1})
+O(h_2^{l_2})
\nonumber\\
&=&h_2^{-\alpha}\sum_{k=0}^{n}w_k^{(2,\alpha)} u_{2}(x_{N_1+N_I+(n-k+1)m})
\nonumber\\
&&-h_2^{-\alpha}\sum_{k=0}^{n}w_k^{(2,\alpha)} u_{0,2}(x_{N_1+N_I+(n-k+1)m})
+O(h_2^{l_2}),
\end{eqnarray}
for $n=-\frac{N_1+N_I}{m}+1,\cdots,N_2$, where
$u_{2}(x)=\mathcal{M}_2^{a,\epsilon}u(x)$,
$u_{0,2}(x)=\mathcal{M}_2^{a,\epsilon}u_0(x)$, and
\begin{equation}\label{equation2.4.6}
\left\{ \begin{array}{lll}
w_0^{(2,\alpha)}&=&d_1^{(2)} g_0^{(\alpha)},\\
w_1^{(2,\alpha)}&=&d_0^{(2)} g_0^{(\alpha)}+d_1^{(2)} g_1^{(\alpha)},\\
w_k^{(2,\alpha)}&=&d_{-1}^{(2)} g_{k-2}^{(\alpha)}+d_0^{(2)} g_{k-1}^{(\alpha)}+d_1^{(2)} g_k^{(\alpha)},~k\geq 2,
\end{array} \right.
\end{equation}
and the value of $l_2$ depends on the choosing of $d_{-1}^{(2)}$, $d_{0}^{(2)}$, and $d_{1}^{(2)}$ \cite{Zhao:14}, i.e., for sufficiently smooth function $v(x)$,
\begin{eqnarray}
\,_{0}D_{x}^{\alpha}v(x)=
d_{-1}^{(2)}\delta_{h,-1}^{\alpha}v(x)+d_{0}^{(2)}\delta_{h,0}^{\alpha}v(x)
+d_{1}^{(2)}\delta_{h,1}^{\alpha}v(x)+O(h^{l_2}).
\end{eqnarray}

While for $\,_{0}D_{x}^{\alpha}r_2(x_{N_1+N_I+nm+q})$, $n=-\frac{N_1+N_I}{m}+1,\cdots,-1$, and $q=1,2,\cdots,m-1$, since $r_2(x)$ is smooth enough, by the linear interpolation there exists
\begin{eqnarray}\label{equation2.4.7}
&&\,_{0}D_{x}^{\alpha}r_2(x_{N_1+N_I+nm+q})
\nonumber\\
&=&(1-\frac{q}{m})\,_{0}D_{x}^{\alpha}r_2(y_n)
+\frac{q}{m}\,_{0}D_{x}^{\alpha}r_2(y_{n+1})
+O(h_2^{l_2})
\nonumber\\
&=&(1-\frac{q}{m})h_2^{-\alpha}\sum_{k=0}^{n}w_k^{(2,\alpha)} r_{2}(y_{n-k+1})
+\frac{q}{m}h_2^{-\alpha}\sum_{k=0}^{n+1}w_k^{(2,\alpha)} r_{2}(y_{n-k+2})
+O(h_2^{l_2})
\nonumber\\
&=&h_2^{-\alpha}\sum_{k=0}^{n+1}\varpi_{k,q}^{(2,\alpha)} u_{2}(y_{n-k+2})
-h_2^{-\alpha}\sum_{k=0}^{n+1}\varpi_{k,q}^{(2,\alpha)} u_{0,2}(y_{n-k+2})
+O(h_2^{l_2})
\nonumber\\
&=&h_2^{-\alpha}\sum_{k=0}^{n+1}\varpi_{k,q}^{(2,\alpha)} u_{2}(x_{N_1+N_I+(n-k+2)m})
\nonumber\\
&&-h_2^{-\alpha}\sum_{k=0}^{n+1}\varpi_{k,q}^{(2,\alpha)} u_{0,2}(x_{N_1+N_I+(n-k+2)m})
+O(h_2^{l_2}),
\end{eqnarray}
where
\begin{equation}\label{equation2.4.8}
\left\{ \begin{array}{lll}
\varpi_{0,q}^{(2,\alpha)}&=& \frac{q}{m}w_{0}^{(2,\alpha)},\\
\varpi_{k,q}^{(2,\alpha)}&=& (1-\frac{q}{m})w_{k-1}^{(2,\alpha)}+ \frac{q}{m}w_{k}^{(2,\alpha)},~k\geq 1.
\end{array} \right.
\end{equation}
Note that there are
\begin{equation}\label{equation2.4.8.add}
\varpi_{k+1,0}^{(2,\alpha)}=w_{k}^{(2,\alpha)},~~~
\varpi_{k,m}^{(2,\alpha)}=w_{k}^{(2,\alpha)}.
\end{equation}

Combining Eq. (\ref{equation2.4.4})-(\ref{equation2.4.7}) with
\begin{eqnarray}\label{equation2.4.9}
\,_{0}D_{x}^{\alpha}u(\tilde{x}_n)
=\,_{0}D_{x}^{\alpha} r_1(\tilde{x}_n)+\,_{0}D_{x}^{\alpha} r_2(\tilde{x}_n)
+u_0\frac{\tilde{x}_n^{-\alpha}}{\Gamma(1-\alpha)},
\end{eqnarray}
for $n=1,2,\cdots,N-1$, we can get
\begin{eqnarray}\label{equation2.4.10}
&&\,_{0}D_{x}^{\alpha}u(\tilde{x}_{mn})
\nonumber\\
&=&h_1^{-\alpha}\sum_{k=0}^{mn}g_k^{(\alpha)} u_{1}(\tilde{x}_{mn-k+1})
+h_2^{-\alpha}\sum_{k=0}^{n}w_k^{(2,\alpha)} u_{2}(\tilde{x}_{(n-k+1)m})
\nonumber\\
&&-h_1^{-\alpha}\sum_{k=0}^{mn}g_k^{(\alpha)} u_{0,1}(\tilde{x}_{mn-k+1})
-h_2^{-\alpha}\sum_{k=0}^{n}w_k^{(2,\alpha)} u_{0,2}(\tilde{x}_{(n-k+1)m})
\nonumber\\
&&+h_1^{-\alpha}\frac{(mn)^{-\alpha}}{\Gamma(1-\alpha)}u_0+O(h_1)+O(h_2^{l_2}),
\end{eqnarray}
for $n=1,2,\cdots,\frac{N_1+N_I}{m}$;
\begin{eqnarray}\label{equation2.4.11}
&&\,_{0}D_{x}^{\alpha}u(\tilde{x}_{mn+q})
\nonumber\\
&=&h_1^{-\alpha}\sum_{k=0}^{mn+q}g_k^{(\alpha)} u_{1}(\tilde{x}_{mn+q-k+1})
+h_2^{-\alpha}\sum_{k=0}^{n+1}\varpi_{k,q}^{(2,\alpha)} u_{2}(\tilde{x}_{(n-k+2)m})
\nonumber\\
&&-h_1^{-\alpha}\sum_{k=0}^{mn+q}g_k^{(\alpha)} u_{0,1}(\tilde{x}_{mn+q-k+1})
-h_2^{-\alpha}\sum_{k=0}^{n+1}\varpi_{k,q}^{(2,\alpha)} u_{0,2}(\tilde{x}_{(n-k+2)m})
\nonumber\\
&&+h_1^{-\alpha}\frac{(mn+q)^{-\alpha}}{\Gamma(1-\alpha)}u_0+O(h_1)+O(h_2^{l_2}),
\end{eqnarray}
for $q=1,2,\cdots,m-1$ and $n=0,1,\cdots,\frac{N_1+N_I}{m}-1$; and
\begin{eqnarray}\label{equation2.4.12}
&&\,_{0}D_{x}^{\alpha}u(\tilde{x}_{n})
\nonumber\\
&=&h_1^{-\alpha}\bigg[\sum_{k=n+2-N_1-N_I}^{n}g_k^{(\alpha)} u_{1}(\tilde{x}_{n-k+1})
+m^{-\alpha}\sum_{k=0}^{n+1-N_1-N_I} w_k^{(2,\alpha)} u_2(\tilde{x}_{n-k+1})
\nonumber\\
&&~~~~~~~~+m^{-\alpha}\sum_{k=n+2-N_1-N_I}^{n-\frac{m-1}{m}(N_1+N_I)} w_k^{(2,\alpha)} u_2(\tilde{x}_{N_1+N_I+(n-N_1-N_I-k+1)m})\bigg]
\nonumber\\
&&-h_1^{-\alpha}\bigg[\sum_{k=n+2-N_1-N_I}^{n}g_k^{(\alpha)} u_{0,1}(\tilde{x}_{n-k+1})
\nonumber\\
&&~~~~~~~~+m^{-\alpha}\sum_{k=0}^{n+1-N_1-N_I} w_k^{(2,\alpha)} u_{0,2}(\tilde{x}_{n-k+1})
\nonumber\\
&&~~~~~~~~+m^{-\alpha}\sum_{k=n+2-N_1-N_I}^{n-\frac{m-1}{m}(N_1+N_I)} w_k^{(2,\alpha)} u_{0,2}(\tilde{x}_{N_1+N_I+(n-N_1-N_I-k+1)m})\bigg]
\nonumber\\
&&+h_1^{-\alpha}
\frac{\left[(n-N_1-N_I)m+N_1+N_I\right]^{-\alpha}}{\Gamma(1-\alpha)}u_0+O(h_1)+O(h_2^{l_2}),
\end{eqnarray}
for $n=N_1+N_I+1,\cdots,N_1+N_I+N_2$.

\begin{remark}\label{remark:2.4.1}
From the above equalities (\ref{equation2.4.10})-(\ref{equation2.4.12}), it can be noticed that if $u(x)$ is taken just as a constant $u_0$, then $u_1=u_{0,1}$, $u_2=u_{0,2}$, and the approximations are exact. In other words, for a nonhomogeneous function $u(x)$, the approximation accuracy is the same as its corresponding homogeneous one $u(x)-u_0$. That is why in the above methods, we divide the function $r(x)=u(x)-u_0$ into $r_1(x)$ and $r_2(x)$, rather than directly divide $u(x)$ into $u_1(x)$ and $u_2(x)$.
\end{remark}

\begin{remark}\label{remark:2.4.2}
Noting that the number of terms in the summation related to $u_1(x)$ in (\ref{equation2.4.12}) equals to $N_1+N_I-1$ and does not change with $n$, we can see the cost of calculation in this way can be reduced notably than computing $\,_{0}D_{x}^{\alpha}u(x)$ on $(0,b)$ totally by the first order method (\ref{equation2.4.3}) with fine stepsize $h_1$.
\end{remark}

As for Case 2 in Subsection \ref{subsec:2.2}, we simply assume that $u(x)\in \mathbf{C}^2[0,b]$, $D^3 u(x)\in L^1(0,b)$, and $u(0)=u'(0)=0$. At this time, we have
\begin{eqnarray}\label{equation2.4.13}
&&\,_{0}D_{x}^{\alpha}u_2(y_n)
\nonumber\\
&=&h_2^{-\alpha}\sum_{k=0}^{n+N_1m+N_I}w_k^{(2,\alpha)} u_{2}(y_{n-k+1})
+O(h_2^{l_2}),
\end{eqnarray}
for $n=-N_1m-N_I,\cdots,N_2$, where $y_n=nh_2$, $u_{2}(y)=\mathcal{M}_2^{a,\epsilon}u(y)$;

\begin{eqnarray}\label{equation2.4.14}
&&\,_{0}D_{x}^{\alpha}u_1(y_{(n-N_1-1)m-N_I+1})
\nonumber\\
&=&\,_{0}D_{x}^{\alpha}u_1(x_{n})
\nonumber\\
&=&h_1^{-\alpha}\sum_{k=0}^{n}w_k^{(1,\alpha)} u_{1}(x_{n-k+1})
+O(h_1^{l_1})
\nonumber\\
&=&h_1^{-\alpha}\sum_{k=0}^{n}w_k^{(1,\alpha)} u_{1}(y_{(n-N_1-k)m-N_I+1})+O(h_1^{l_1}),
\end{eqnarray}
for $n=1,\cdots,N_1+1+\frac{N_2+N_I}{m}$, where
$u_{1}(y)=\mathcal{M}_1^{a,\epsilon}u(y)$, and $w_k^{(j,\alpha)},~j=1,2$, are defined as in (\ref{equation2.4.6});
while for $\,_{0}D_{x}^{\alpha}u_1(y_{(n-N_1-1)m-N_I+1+q})$, $n=N_1+1,\cdots,N_1+\frac{N_2+N_I}{m}$, and $q=1,2,\cdots,m-1$, since $u_1(x)$ is smooth enough, by the linear interpolation there exists
\begin{eqnarray}\label{equation2.4.15}
&&\,_{0}D_{x}^{\alpha}u_1(y_{(n-N_1-1)m-N_I+1+q})
\nonumber\\
&=&\left(1-\frac{q}{m}\right)\,_{0}D_{x}^{\alpha}u_1(x_n)
+\frac{q}{m}\,_{0}D_{x}^{\alpha}u_1(x_{n+1})
+O(h_1^2)
\nonumber\\
&=&\left(1-\frac{q}{m}\right)h_1^{-\alpha}\sum_{k=0}^{n}w_k^{(1,\alpha)} u_{1}(x_{n-k+1})
+\frac{q}{m}h_1^{-\alpha}\sum_{k=0}^{n+1}w_k^{(1,\alpha)} u_{1}(x_{n-k+2})
+O(h_1^{l_1})
\nonumber\\
&=&h_1^{-\alpha}\sum_{k=0}^{n+1}\varpi_{k,q}^{(1,\alpha)} u_{1}(x_{n-k+2})
+O(h_1^{l_1})
\nonumber\\
&=&h_1^{-\alpha}\sum_{k=0}^{n+1}\varpi_{k,q}^{(1,\alpha)} u_{1}(y_{(n-N_1-k+1)m-N_I+1})
+O(h_1^{l_1}),
\end{eqnarray}
where $\varpi_{k,q}^{(1,\alpha)}$ are similarly defined as the ones in (\ref{equation2.4.8}).

Combining Eq. (\ref{equation2.4.13})-(\ref{equation2.4.15}) with
\begin{eqnarray}\label{equation2.4.16}
\,_{0}D_{x}^{\alpha}u(\tilde{x}_n)
=\,_{0}D_{x}^{\alpha} u_1(\tilde{x}_n)+\,_{0}D_{x}^{\alpha} u_2(\tilde{x}_n),
\end{eqnarray}
for $n=1,2,\cdots,N-1$, we get
\begin{eqnarray}\label{equation2.4.17}
&&\,_{0}D_{x}^{\alpha}u(\tilde{x}_{n})
\nonumber\\
&=&h_1^{-\alpha}\sum_{k=0}^{n}w_k^{(1,\alpha)} u_{1}(\tilde{x}_{n-k+1})
+h_2^{-\alpha}\sum_{k=0}^{mn}w_k^{(2,\alpha)} u_{2}(\tilde{x}_{mn-k+1})
\nonumber\\
&&+O(h_1^{l_1})+O(h_2^{l_2}),
\end{eqnarray}
for $n=1,2,\cdots,N_1+1$;

\begin{eqnarray}\label{equation2.4.18}
&&\,_{0}D_{x}^{\alpha}u(\tilde{x}_{(n-N_1-1)m+N_1+1})
\nonumber\\
&=&h_2^{-\alpha}\bigg[m^{-\alpha}\sum_{k=0}^{n-N_1}w_k^{(1,\alpha)} u_{1}(\tilde{x}_{(n-k-N_1)m+N_1+1})
\nonumber\\
&&~~~~~~+m^{-\alpha}\sum_{k=n-N_1+1}^{n} w_k^{(1,\alpha)} u_{1}(\tilde{x}_{n-k+1})
\nonumber\\
&&~~~~~~
+\sum_{k=0}^{(n-N_1-1)m+1} w_k^{(2,\alpha)} u_2(\tilde{x}_{(n-N_1-1)m+N_1+2-k})\bigg]
\nonumber\\
&&+O(h_1^{l_1})+O(h_2^{l_2}),
\end{eqnarray}
for $n=N_1+1,N_1+1+m,\cdots,N_1+1+\frac{N_I+N_2}{m}$;

\begin{eqnarray}\label{equation2.4.19}
&&\,_{0}D_{x}^{\alpha}u(\tilde{x}_{(n-N_1-1)m+N_1+1+q})
\nonumber\\
&=&h_2^{-\alpha}\bigg[m^{-\alpha}\sum_{k=0}^{n-N_1+1}\varpi_{k,q}^{(1,\alpha)} u_{1}(\tilde{x}_{(n-k-N_1+1)m+N_1+1})
\nonumber\\
&&~~~~~~+m^{-\alpha}\sum_{k=n-N_1+2}^{n+1} \varpi_{k,q}^{(1,\alpha)} u_{1}(\tilde{x}_{n-k+2})
\nonumber\\
&&~~~~~~
+\sum_{k=0}^{(n-N_1-1)m+1+q} w_k^{(2,\alpha)} u_2(\tilde{x}_{(n-N_1-1)m+N_1+2-k+q})\bigg]
\nonumber\\
&&+O(h_1^{l_1})+O(h_2^{l_2}),
\end{eqnarray}
for $q=1,2,\cdots,m-1$, and $n=N_1+1,N_1+1+m,\cdots,N_1+1+\frac{N_I+N_2}{m}$.

\begin{remark}\label{remark:2.4.3}
For the case $h_2\leq h_1$, there is no difficulty to generate the above non-uniform schemes for the function $u(x)$ with $u(0)\neq0$ or $u'(0)\neq0$, by using the techniques introduced in \cite{Chen:14} and \cite{Zhao:14}. We omit the details here.
\end{remark}

\section{Application to Space Fractional Diffusion Equations}\label{sec:3}
In this section, we develop a general non-uniform scheme for space fractional diffusion equation:
\begin{equation}\label{equation3.0.1}
\left\{ \begin{array}{lll}
\frac{\partial u(x,t) }{\partial t}&=&K\,_{0}D_x^{\alpha}u(x,t)+f(x,t) \\
&& ~~~~ ~~~~ ~~~ {\rm for}~~~ (x,t) \in (0,b)\times (0,T),\\
u(x,0) &=&\phi_0(x) ~~~~ {\rm for}~~~ x \in [0,b], \\
u(0,t)&=&u_{0}(t) ~~~~{\rm for}~~~ t\in[0,T],\\
u(b,t)&=&u_{b}(t) ~~~~{\rm for}~~~ t\in[0,T],
\end{array} \right.
\end{equation}
where $_{0}D_x^{\alpha}$ is the left Riemann-Liouville fractional
derivative with $1<\alpha \leq 2$. The diffusion
coefficient $K$ is a nonnegative constant.

\subsection{Numerical schemes on non-uniform meshes}\label{subsec:3.1}

We partition the interval $[0,b]$ as discussed in Subsection \ref{subsec:2.2} and denote the time steplength
$\tau=T/M$, where $M$ is a positive integer. The mesh points in time direction are denoted by $t_i=i\tau$ for $0\leq i \leq M$. Let $t_{i+1/2}=(t_i+t_{i+1})/2$ for $0\leq i \leq M-1$. And the
following notations are used in the sections below
\begin{eqnarray}\label{equation3.0.2}
&&u_n^i=u(\tilde{x}_n,t_i),~~u_{j,n}^i=u_j(\tilde{x}_n,t_i),~j=1,2,
\nonumber\\
&&f_n^{i+1/2}=f(\tilde{x}_n,t_{i+1/2}),
\nonumber\\
&&u_0^{i}=u_0(t_i),~~u_{0,j,n}^{i}=u_{0,j}(\tilde{x}_n,t_i),~j=1,2.
\end{eqnarray}

Let $u(x,\cdot)\in \mathbf{C}^1[0,b]$, $D^2 u(x,\cdot)\in L^1(0,b)$, and $D^2 u (x,\cdot)\in \mathbf{C}[\delta,b]$ for any $\delta>0$, and $h_1\leq h_2$. Using the Crank-Nicolson (CN) technique to discretize the time derivative of (\ref{equation3.0.1}) and non-uniform discretization (\ref{equation2.4.10})-(\ref{equation2.4.12}) in space direction leads to
\begin{eqnarray}\label{equation3.0.3}
&&u_{mn}^{i+1}
-\frac{\tau}{2}K h_1^{-\alpha}\bigg[
\sum_{k=0}^{mn}g_k^{(\alpha)} u_{1,mn-k+1}^{i+1}
+m^{-\alpha}\sum_{k=0}^{n}w_k^{(2,\alpha)} u_{2,(n-k+1)m}^{i+1}\bigg]
\nonumber\\
&=&u_{mn}^{i}
+\frac{\tau}{2}K h_1^{-\alpha}\bigg[
\sum_{k=0}^{mn}g_k^{(\alpha)} u_{1,mn-k+1}^{i}
+m^{-\alpha}\sum_{k=0}^{n}w_k^{(2,\alpha)} u_{2,(n-k+1)m}^{i}\bigg]
\nonumber\\
&&-\frac{\tau}{2}K h_1^{-\alpha}\bigg[
\sum_{k=0}^{mn}g_k^{(\alpha)} \left(u_{0,1,mn-k+1}^{i}+u_{0,1,mn-k+1}^{i+1}\right)
\nonumber\\
&&~~~~~~~~~~~~~~+m^{-\alpha}\sum_{k=0}^{n}w_k^{(2,\alpha)}\left( u_{0,2,(n-k+1)m}^{i}+u_{0,2,(n-k+1)m}^{i+1}\right)
\bigg]
\nonumber\\
&&+\frac{\tau}{2}K h_1^{-\alpha}\frac{(mn)^{-\alpha}}{\Gamma(1-\alpha)}
\left(u_0^{i}+u_0^{i+1}\right)
\nonumber\\
&&+\tau f_{mn}^{i+1/2}
+O\left(\tau (h_1+h_2^{l_2})+\tau^3\right),
\end{eqnarray}
for $n=1,2,\cdots,\frac{N_1+N_I}{m}$;

\begin{eqnarray}\label{equation3.0.4}
&&u_{mn+q}^{i+1}
-\frac{\tau}{2}K h_1^{-\alpha}\bigg[
\sum_{k=0}^{mn+q}g_k^{(\alpha)} u_{1,mn+q-k+1}^{i+1}
+m^{-\alpha}\sum_{k=0}^{n+1}\varpi_{k,q}^{(2,\alpha)} u_{2,(n-k+2)m}^{i+1}\bigg]
\nonumber\\
&=&u_{mn+q}^{i}
+\frac{\tau}{2}K h_1^{-\alpha}\bigg[
\sum_{k=0}^{mn+q}g_k^{(\alpha)} u_{1,mn+q-k+1}^{i}
+m^{-\alpha}\sum_{k=0}^{n+1}\varpi_{k,q}^{(2,\alpha)} u_{2,(n-k+2)m}^{i}\bigg]
\nonumber\\
&&-\frac{\tau}{2}K h_1^{-\alpha}\bigg[
\sum_{k=0}^{mn+q}g_k^{(\alpha)} \left(u_{0,1,mn+q-k+1}^{i}+u_{0,1,mn+q-k+1}^{i+1}\right)
\nonumber\\
&&~~~~~~~~~~~~~~+m^{-\alpha}\sum_{k=0}^{n+1}\varpi_{k,q}^{(2,\alpha)}\left( u_{0,2,(n-k+2)m}^{i}+u_{0,2,(n-k+2)m}^{i+1}\right)
\bigg]
\nonumber\\
&&+\frac{\tau}{2}K h_1^{-\alpha}\frac{(mn+q)^{-\alpha}}{\Gamma(1-\alpha)}
\left(u_0^{i}+u_0^{i+1}\right)
\nonumber\\
&&+\tau f_{mn+q}^{i+1/2}
+O\left(\tau (h_1+h_2^{l_2})+\tau^3\right),
\end{eqnarray}
for $q=1,2,\cdots,m-1$ and $n=0,1,\cdots,\frac{N_1+N_I}{m}-1$; and

\begin{eqnarray}\label{equation3.0.5}
&&u_{n}^{i+1}
\nonumber\\
&&-\frac{\tau}{2}K h_1^{-\alpha}\bigg[
\sum_{k=n+2-N_1-N_I}^{n}g_k^{(\alpha)} u_{1,n-k+1}^{i+1}
+m^{-\alpha}\sum_{k=0}^{n+1-N_1-N_I} w_k^{(2,\alpha)} u_{2,n-k+1}^{i+1}
\nonumber\\
&&~~~~~~~~~~~~~~+m^{-\alpha}\sum_{k=n+2-N_1-N_I}^{n-\frac{m-1}{m}(N_1+N_I)} w_k^{(2,\alpha)} u_{2,N_1+N_I+(n-N_1-N_I-k+1)m}^{i+1}\bigg]
\nonumber\\
&=&u_{n}^{i}
\nonumber\\
&&+\frac{\tau}{2}K h_1^{-\alpha}\bigg[
\sum_{k=n+2-N_1-N_I}^{n}g_k^{(\alpha)} u_{1,n-k+1}^{i}
+m^{-\alpha}\sum_{k=0}^{n+1-N_1-N_I} w_k^{(2,\alpha)} u_{2,n-k+1}^{i}
\nonumber\\
&&~~~~~~~~~~~~~~+m^{-\alpha}\sum_{k=n+2-N_1-N_I}^{n-\frac{m-1}{m}(N_1+N_I)} w_k^{(2,\alpha)} u_{2,N_1+N_I+(n-N_1-N_I-k+1)m}^{i}\bigg]
\nonumber\\
&&-\frac{\tau}{2}K h_1^{-\alpha}\bigg[
\sum_{k=n+2-N_1-N_I}^{n}g_k^{(\alpha)} \left(u_{0,1,n-k+1}^{i}+u_{0,1,n-k+1}^{i+1}\right)
\nonumber\\
&&~~~~~~~~~~~~~~+m^{-\alpha}\sum_{k=0}^{n+1-N_1-N_I} w_k^{(2,\alpha)}\left( u_{0,2,n-k+1}^{i}+u_{0,2,n-k+1}^{i+1}\right)
\nonumber\\
&&~~~~~~~~~~~~~~+m^{-\alpha}\sum_{k=n+2-N_1-N_I}^{n-\frac{m-1}{m}(N_1+N_I)} w_k^{(2,\alpha)} \Big( u_{0,2,N_1+N_I+(n-N_1-N_I-k+1)m}^{i}
\nonumber\\
&&~~~~~~~~~~~~~~~~~~~~~~~~~~~~~~~~~~~~~~~~~~~~~~~~~~~~+u_{0,2,N_1+N_I+(n-N_1-N_I-k+1)m}^{i+1}\Big)
\bigg]
\nonumber
\end{eqnarray}
\begin{eqnarray}
&&+\frac{\tau}{2}K h_1^{-\alpha}\frac{\left[(n-N_1-N_I)m+N_1+N_I\right]^{-\alpha}}{\Gamma(1-\alpha)}
\left(u_0^{i}+u_0^{i+1}\right)
\nonumber\\
&&+\tau f_{n}^{i+1/2}
+O\left(\tau (h_1+h_2^{l_2})+\tau^3\right),
\end{eqnarray}
for $n=N_1+N_I+1,\cdots,N_1+N_I+N_2$.

Let $u(x,\cdot)\in \mathbf{C}^2[0,b]$, $D^3 u(x,\cdot)\in L^1(0,b)$, and $u(0,t)=u'(0,t)=0$, and $h_2\leq h_1$. Using the CN technique to discretize the time derivative of (\ref{equation3.0.1}) and non-uniform discretization (\ref{equation2.4.17})-(\ref{equation2.4.19}) in space direction leads to
\begin{eqnarray}\label{equation3.0.6}
&&u_{n}^{i+1}
-\frac{\tau}{2}K h_2^{-\alpha}\bigg[
m^{-\alpha}\sum_{k=0}^{n}w_k^{(1,\alpha)} u_{1,n-k+1}^{i+1}
+\sum_{k=0}^{mn}w_k^{(2,\alpha)} u_{2,mn-k+1}^{i+1}\bigg]
\nonumber\\
&=&u_{n}^{i}
+\frac{\tau}{2}K h_2^{-\alpha}\bigg[
m^{-\alpha}\sum_{k=0}^{n}w_k^{(1,\alpha)} u_{1,n-k+1}^{i}
+\sum_{k=0}^{mn}w_k^{(2,\alpha)} u_{2,mn-k+1}^{i}\bigg]
\nonumber\\
&&+\tau f_{n}^{i+1/2}+O\left(\tau (h_1^{l_1}+h_2^{l_2})+\tau^3\right),
\end{eqnarray}
for $n=1,2,\cdots,N_1+1$;

\begin{eqnarray}\label{equation3.0.7}
&&u_{(n-N_1-1)m+N_1+1}^{i+1}
\nonumber\\
&&-\frac{\tau}{2}K h_2^{-\alpha}\bigg[
m^{-\alpha}\sum_{k=0}^{n-N_1}w_k^{(1,\alpha)} u_{1,(n-k-N_1)m+N_1+1}^{i+1}
\nonumber\\
&&~~~~~~~~~~~~~~
+m^{-\alpha}\sum_{k=n-N_1+1}^{n} w_k^{(1,\alpha)}u_{1,n-k+1}^{i+1}
\nonumber\\
&&~~~~~~~~~~~~~~
+\sum_{k=0}^{(n-N_1-1)m+N_1+1} w_k^{(2,\alpha)} u_{2,(n-N_1-1)m+N_1+2-k}^{i+1}\bigg]
\nonumber\\
&=&u_{(n-N_1-1)m+N_1+1}^{i}
\nonumber\\
&&+\frac{\tau}{2}K h_2^{-\alpha}\bigg[
m^{-\alpha}\sum_{k=0}^{n-N_1}w_k^{(1,\alpha)} u_{1,(n-k-N_1)m+N_1+1}^{i}
\nonumber\\
&&~~~~~~~~~~~~~~
+m^{-\alpha}\sum_{k=n-N_1+1}^{n} w_k^{(1,\alpha)}u_{1,n-k+1}^{i}
\nonumber\\
&&~~~~~~~~~~~~~~
+\sum_{k=0}^{(n-N_1-1)m+N_1+1} w_k^{(2,\alpha)} u_{2,(n-N_1-1)m+N_1+2-k}^{i}\bigg]
\nonumber\\
&&+\tau f_{(n-N_1-1)m+N_1+1}^{i+1/2}+O\left(\tau (h_1^{l_1}+h_2^{l_2})+\tau^3\right),
\end{eqnarray}
for $n=N_1+1,N_1+1+m,\cdots,N_1+1+\frac{N_I+N_2}{m}$; and

\begin{eqnarray}\label{equation3.0.8}
&&u_{(n-N_1-1)m+N_1+1+q}^{i+1}
\nonumber\\
&&-\frac{\tau}{2}K h_2^{-\alpha}\bigg[
m^{-\alpha}\sum_{k=0}^{n-N_1+1}\varpi_{k,q}^{(1,\alpha)}
u_{1,(n-k-N_1+1)m+N_1+1}^{i+1}
\nonumber\\
&&~~~~~~~~~~~~~~
+m^{-\alpha}\sum_{k=n-N_1+2}^{n+1} \varpi_{k,q}^{(1,\alpha)}
u_{1,n-k+2}^{i+1}
\nonumber\\
&&~~~~~~~~~~~~~~
+\sum_{k=0}^{(n-N_1-1)m+N_1+1+q} w_k^{(2,\alpha)}
u_{2,(n-N_1-1)m+N_1+2-k+q}^{i+1}\bigg]
\nonumber\\
&=&u_{(n-N_1-1)m+N_1+1+q}^{i}
\nonumber\\
&&+\frac{\tau}{2}K h_2^{-\alpha}\bigg[
m^{-\alpha}\sum_{k=0}^{n-N_1+1}\varpi_{k,q}^{(1,\alpha)}
u_{1,(n-k-N_1+1)m+N_1+1}^{i}
\nonumber\\
&&~~~~~~~~~~~~~~
+m^{-\alpha}\sum_{k=n-N_1+2}^{n+1} \varpi_{k,q}^{(1,\alpha)}
u_{1,n-k+2}^{i}
\nonumber\\
&&~~~~~~~~~~~~~~
+\sum_{k=0}^{(n-N_1-1)m+N_1+1+q} w_k^{(2,\alpha)}
u_{2,(n-N_1-1)m+N_1+2-k+q}^{i}\bigg]
\nonumber\\
&&+\tau f_{(n-N_1-1)m+N_1+1+q}^{i+1/2}+O\left(\tau (h_1^{l_1}+h_2^{l_2})+\tau^3\right),
\end{eqnarray}
for $q=1,2,\cdots,m-1$, and $n=N_1+1,N_1+1+m,\cdots,N_1+1+\frac{N_I+N_2}{m}$.

Denote $U_{i}^{n}$ as the numerical approximation to $u_{i}^{n}$, and define the column vectors
\begin{eqnarray}\label{equation3.0.9}
\textbf{U}^i&=&\big[U_{1}^i,U_{2}^i,\cdots,U_{N-1}^{i}\big]^T,\label{equation3.0.10}\\
\textbf{F}^{i+1/2}&=&\big[f_{1}^{i+1/2},f_{2}^{i+1/2},\cdots,f_{N-1}^{i+1/2}\big]^T.
\label{equation3.0.11}
\end{eqnarray}
Eq. (\ref{equation3.0.3})-(\ref{equation3.0.5}) and Eq. (\ref{equation3.0.6})-(\ref{equation3.0.8}) share a general matrix form as
\begin{equation}\label{equation3.0.12}
\left(\textbf{I}-\frac{\tau K}{2}\textbf{D}\right) \textbf{U}^{i+1}
=\left(\textbf{I}+\frac{\tau K}{2}\textbf{D}\right) \textbf{U}^{i}
+\tau \textbf{F}^{i+1/2}+\textbf{H}^{i};
\end{equation}
Similar process can be carried out in Case 1 as well as in Case 2 for right Riemann-Liouville fractional derivative, just by replacing the weights, say, $g_k$, $k\downarrow$, with $g_k$, $k\uparrow$, and $(1-\frac{q}{m})$ with $\frac{q}{m}$, and $N_1$, $N_2$ with $N_2$, $N_1$. Figuratively speaking, $u_1$ or $u_2$ in Case 1 or Case 2 for left Riemann-Liouville fractional derivative can be seen as the ``transposition" of $u_2$ or $u_1$ in Case 2 or Case 1 for right Riemann-Liouville fractional derivative. Therefore, the general approximate scheme for right Riemann-Liouville problem is
\begin{equation}\label{equation3.0.13}
\left(\textbf{I}-\frac{\tau K}{2}\widetilde{\textbf{D}}\right) \textbf{U}^{i+1}
=\left(\textbf{I}+\frac{\tau K}{2}\widetilde{\textbf{D}}\right) \textbf{U}^{i}
+\tau \textbf{F}^{i+1/2}+\widetilde{\textbf{H}}^{i};
\end{equation}
see Appendix A for the concrete expressions of these matrices and vectors.

\begin{remark}\label{remark:3.1.1}
It is easy to see that if $m=1$, and $[d_{-1}^{(1)},d_0^{(1)},d_1^{(1)}]
=[d_{-1}^{(2)},d_0^{(2)},d_1^{(2)}]$, then the scheme (\ref{equation3.0.12}) turns to
the one of uniform meshes, and the differential matrix $\textbf{D}$ reduces to the
uniform finite differential matrix which has been widely discussed such as in
\cite{Li:13,Tian:12,Zhao:14,Zhou:13}, etc.
\end{remark}

\subsection{Convergence and stability analysis}\label{subsec:3.2}


Now we consider the error estimates for the general scheme (\ref{equation3.0.12}) under the following discrete $L^2$ norm defined as
\begin{equation}\label{equation3.2.1}
\|\textbf{U}\|_{h_1,h_2}=\left(h_1\sum_{n=1}^{N_1+N_I} U_n^2+h_2\sum_{n=N_1+N_I+1}^{N-1} U_n^2\right)^{1/2} ~~~\forall~ \textbf{U}\in \mathbb{R}^{N-1},
\end{equation}
if $h_1\leq h_2$; and
\begin{equation}\label{equation3.2.0}
\|\textbf{U}\|_{h_1,h_2}=\left(h_1\sum_{n=1}^{N_1} U_n^2+h_2\sum_{n=N_1+1}^{N-1} U_n^2\right)^{1/2} ~~~\forall~ \textbf{U}\in \mathbb{R}^{N-1},
\end{equation}
if $h_2\leq h_1$.

From Remark \ref{remark:3.1.1} we know that if $m=1$, and $[d_{-1}^{(1)},d_0^{(1)},d_1^{(1)}]
=[d_{-1}^{(2)},d_0^{(2)},d_1^{(2)}]$, then the scheme (\ref{equation3.0.12}) is just one of the
widely discussed scheme on uniform meshes. For different choice of $d_{-1}^{(1)}$, $d_0^{(1)}$, and $d_1^{(1)}$, we can get different scheme (\ref{equation3.0.12}). Here we are only interested in those ones whose corresponding schemes on uniform meshes are stable and convergent; that is, the following
lemma holds.

\begin{lemma}\label{lemma:3.2.1}
Let $u_{n}^{i}$ be the exact solution of problem (\ref{equation3.0.1}) and sufficiently smooth, and
$U_{n}^{i}$ the solution of a difference
scheme (\ref{equation3.0.12}) at the grid point $(x_n,t_i)$.
If $m=1$, and $[d_{-1}^{(1)},d_0^{(1)},d_1^{(1)}]=[d_{-1}^{(2)},d_0^{(2)},d_1^{(2)}]$, then for all
$1\leq i\leq M$, we have
\begin{equation}
\|\textbf{u}^i-\textbf{U}^i\|\leq c(\tau^2+h^{l}),
\end{equation}
where $h:=h_1=h_2$, $l:=l_1=l_2$, $c$ denotes a positive constant
and $\|\cdot\|$ stands for the discrete $L^2$ norm.
\end{lemma}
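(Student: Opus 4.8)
The plan is to reduce the statement to the known theory of uniform-mesh schemes. By Remark~\ref{remark:3.1.1}, when $m=1$ and $[d_{-1}^{(1)},d_0^{(1)},d_1^{(1)}]=[d_{-1}^{(2)},d_0^{(2)},d_1^{(2)}]$, the mollification splitting collapses (because $\mathcal{M}_1^{a,\epsilon}v+\mathcal{M}_2^{a,\epsilon}v=v$ by Lemma~\ref{lemma:2.2.1}), the two stepsizes merge into a single $h:=h_1=h_2$, the two spatial orders merge into $l:=l_1=l_2$, and the matrix $\mathbf{D}$ in (\ref{equation3.0.12}) becomes the Toeplitz matrix generated by the single set of weights $w_k^{(\alpha)}$ of one weighted-and-shifted Gr\"unwald operator. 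The first thing I would do is verify carefully that, under this reduction, the mesh equations (\ref{equation3.0.3})--(\ref{equation3.0.8}) collapse term by term onto the standard Crank--Nicolson discretization of (\ref{equation3.0.1}) on a uniform mesh, so that the convergence results of \cite{Li:13,Tian:12,Zhao:14,Zhou:13} apply verbatim; this is routine bookkeeping, but it is the one place where an index mismatch could creep in.

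Next I would record the consistency estimate. With $c_{-1}=c_1=0$, $c_0=1$ in (\ref{equation2.4.1}), the spatial truncation error of the chosen operator is $O(h^{l})$, and the Crank--Nicolson stepping centered at $t_{i+1/2}$ contributes $O(\tau^2)$; combined with the assumed smoothness of $u$, the local truncation error of (\ref{equation3.0.12}) is $O(\tau^2+h^{l})$, uniformly in the interior grid index $n$ and the time level $i$.

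Then I would establish $L^2$-stability of the reduced scheme in the norm (\ref{equation3.2.1})/(\ref{equation3.2.0}), which for $m=1$ is the usual $\big(h\sum_n U_n^2\big)^{1/2}$. The decisive algebraic fact, which is precisely what singles out the ``stable and convergent'' choices of $d_{-1},d_0,d_1$ referred to in the statement, is that the symmetric part $\tfrac12(\mathbf{D}+\mathbf{D}^{T})$ of the Toeplitz matrix $\mathbf{D}$ is negative semidefinite; this is shown in \cite{Tian:12,Li:13,Zhao:14} by examining the real part of the symbol of the weights on the unit circle. Granting this, one takes the discrete inner product of (\ref{equation3.0.12}) with $\mathbf{U}^{i+1}+\mathbf{U}^{i}$, uses $K\ge 0$ to discard the $\mathbf{D}$-term favorably, and a Cauchy--Schwarz plus discrete Gr\"onwall argument bounds $\|\mathbf{U}^{i}\|$ uniformly for $0\le i\le M$ in terms of the data and the forcing.

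Finally, I would write the error equation for $\mathbf{e}^i:=\mathbf{u}^i-\mathbf{U}^i$: it has the same matrix form as (\ref{equation3.0.12}) but with right-hand side equal to the negated local-truncation vector, whose entries are $O(\tau^2+h^{l})$. Since $\mathbf{e}^0=\mathbf{0}$, applying the stability estimate just obtained to this error equation (a Lax-type argument) yields $\|\mathbf{e}^i\|\le c(\tau^2+h^{l})$ for all $1\le i\le M$, which is the claim. The main obstacle is not analytic but the negative semidefiniteness of $\tfrac12(\mathbf{D}+\mathbf{D}^{T})$ for the admissible weights; given the phrasing of the lemma, this property is imported from the cited uniform-mesh works rather than re-derived here.
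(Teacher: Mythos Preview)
The paper does not actually prove Lemma~\ref{lemma:3.2.1}. The paragraph immediately preceding it makes this explicit: the authors restrict attention to weights $[d_{-1},d_0,d_1]$ for which the corresponding uniform-mesh scheme is already known to be stable and convergent, and then state the lemma as a summary of that known fact, with the references \cite{Li:13,Tian:12,Zhao:14,Zhou:13} supplying the proof. In other words, Lemma~\ref{lemma:3.2.1} is a citation, not a theorem proved in the paper.

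Your proposal is therefore not competing with a proof in the paper; it is supplying the argument the paper outsources. The reduction you describe---Remark~\ref{remark:3.1.1} plus Lemma~\ref{lemma:2.2.1} collapsing the mollification so that $\mathbf{D}$ becomes the single Toeplitz matrix of the weighted-and-shifted Gr\"unwald operator---is exactly how the paper frames the lemma. The consistency bound $O(\tau^2+h^l)$ from (\ref{equation2.4.1}) and Crank--Nicolson, the negative semidefiniteness of $\tfrac12(\mathbf{D}+\mathbf{D}^T)$ for the admissible weights, and the energy/Gr\"onwall argument you sketch are precisely the content of the cited works. Your closing remark that the semidefiniteness is ``imported from the cited uniform-mesh works rather than re-derived here'' matches the paper's stance exactly. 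Nothing is missing or incorrect.
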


Now we prove the convergence of the scheme (\ref{equation3.0.12}) on non-uniform meshes.

\begin{theorem}\label{theorem3.2.1}
Fix $a$ and $\epsilon$. Let $u_{n}^{i}$ be the exact solution of problem (\ref{equation3.0.1}), and $U_{n}^{i}$ the solution of a non-uniform difference
scheme (\ref{equation3.0.12}) at the grid point $(x_n,t_i)$.
Then the following estimates
\begin{equation}\label{equation3.2.4}
\|\textbf{u}^i-\textbf{U}^i\|_{h_1,h_2}\leq c(\tau^2+h_1+h_2^{l_2}),
\end{equation}
if $h_1\leq h_2$, and
\begin{equation}\label{equation3.2.5}
\|\textbf{u}^i-\textbf{U}^i\|_{h_1,h_2}\leq c(\tau^2+h_1^{l_1}+h_2^{l_2}),
\end{equation}
if $h_2\leq h_1$, hold, where $c$ denotes a positive constant which depends
on $\alpha$, $a$, $b$ and $\epsilon$, and $\|\cdot\|_{h_1,h_2}$ stands for
the discrete $L^2$ norm defined in (\ref{equation3.2.1}) or (\ref{equation3.2.0}).
\end{theorem}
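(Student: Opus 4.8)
The plan is to run the classical consistency-plus-stability argument, handling the two regimes $h_1\le h_2$ and $h_2\le h_1$ in parallel since they differ only in which of $u_1,u_2$ carries the fine mesh (one then uses (\ref{equation3.0.3})--(\ref{equation3.0.5}) or (\ref{equation3.0.6})--(\ref{equation3.0.8}) respectively); I will describe the case $h_1\le h_2$, the other being verbatim. \textbf{Consistency.} First I would substitute the exact solution $u_n^i$ into the general scheme (\ref{equation3.0.12}) and collect the residual $\textbf{R}^i$ defined by $(\textbf{I}-\tfrac{\tau K}{2}\textbf{D})\textbf{u}^{i+1}=(\textbf{I}+\tfrac{\tau K}{2}\textbf{D})\textbf{u}^{i}+\tau\textbf{F}^{i+1/2}+\textbf{H}^{i}+\textbf{R}^i$. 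By Lemma \ref{lemma:2.2.1} the mollified pieces $u_1=\mathcal{M}_1^{a,\epsilon}u$ and $u_2=\mathcal{M}_2^{a,\epsilon}u$ inherit the regularity of $u$, so the spatial discretizations (\ref{equation2.4.10})--(\ref{equation2.4.12}) are valid with the asserted orders; combined with the $O(\tau^2)$ Crank--Nicolson truncation this yields $\|\textbf{R}^i\|_{h_1,h_2}\le c\,\tau(\tau^2+h_1+h_2^{l_2})$, with $c$ depending on $\alpha,a,b,\epsilon$ and on $C[\delta,b]$-norms of derivatives of $u$ but not on $h_1,h_2,\tau$. A point to check carefully here is that, although the number of fine interface unknowns grows like $N_I\sim 1/h_1$, it enters this constant only through convergent tail sums of the Gr\"unwald weights (recall $g_k^{(\alpha)}\sim k^{-\alpha-1}$), so $c$ stays bounded under refinement.

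\textbf{Error recursion.} Subtracting (\ref{equation3.0.12}) from the perturbed identity above and setting $\textbf{e}^i=\textbf{u}^i-\textbf{U}^i$ gives $(\textbf{I}-\tfrac{\tau K}{2}\textbf{D})\textbf{e}^{i+1}=(\textbf{I}+\tfrac{\tau K}{2}\textbf{D})\textbf{e}^{i}+\textbf{R}^i$, $\textbf{e}^0=\textbf{0}$, because the data vector $\textbf{H}^i$ (assembled only from the prescribed boundary values) cancels. It then suffices to prove the one-step stability bound $\|\textbf{e}^{i+1}\|_{h_1,h_2}\le(1+c\tau)\|\textbf{e}^i\|_{h_1,h_2}+\|\textbf{R}^i\|_{h_1,h_2}$; a discrete Gronwall inequality and $i\tau\le T$ then give $\|\textbf{e}^i\|_{h_1,h_2}\le c\,e^{cT}(\tau^2+h_1+h_2^{l_2})$, which is (\ref{equation3.2.4}), and (\ref{equation3.2.5}) follows identically.

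\textbf{Stability --- the main obstacle.} Let $W$ be the diagonal matrix with blocks $h_1\textbf{I}$ and $h_2\textbf{I}$ of the sizes appearing in (\ref{equation3.2.1}), so that $\|\textbf{v}\|_{h_1,h_2}^2=\textbf{v}^TW\textbf{v}$; pairing the error recursion with $W(\textbf{e}^{i+1}+\textbf{e}^i)$ and using $K\ge0$ reduces the one-step bound to the spectral estimate $\textbf{v}^TW\textbf{D}\textbf{v}\le c\,\|\textbf{v}\|_{h_1,h_2}^2$ with $c$ independent of $h_1,h_2$ (and $c\le0$ would give unconditional stability). This is where the non-uniformity bites: $\textbf{D}$ is neither Toeplitz nor block-diagonal, since it assembles a fine-mesh shifted-Gr\"unwald block acting on $u_1=\eta_1u$, a coarse-mesh high-order block acting on $u_2=\eta_2u$, and the linear interpolation that prolongs coarse values onto the fine sub-grid, so the Fourier / generating-function positivity arguments available on uniform meshes (the fact underlying Lemma \ref{lemma:3.2.1}, cf.\ \cite{Tian:12,Li:13,Zhao:14}) do not transfer directly. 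My plan is to write $\textbf{D}=G_1E_1+G_2E_2$ schematically, where $E_j$ is the diagonal matrix of the discretized truncation weights $\eta_j$ (so $E_1+E_2=\textbf{I}$ in the sense of Lemma \ref{lemma:2.2.1}) and $G_j$ is the associated uniform-mesh fractional-difference matrix composed, where the mesh changes, with the interpolation operator, and then to bound $\textbf{v}^TW(G_1E_1+G_2E_2)\textbf{v}$ by: (i) the known negative semidefiniteness, in the relevant weighted inner product, of the symmetric part of each uniform-mesh matrix $G_j$ acting on $E_j\textbf{v}$; plus (ii) commutator-type terms measuring the failure of $G_j$ to commute with multiplication by $\eta_j$, together with the defect of the linear interpolation. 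Terms of type (ii) should be $O(\|\textbf{v}\|_{h_1,h_2}^2)$ because $\eta_j\in C_0^{\infty}$ and linear interpolation is $O(1)$-stable, the fixed overlap width $2\epsilon$ keeping these contributions lower order. Making this decomposition precise and, above all, bounding the cross terms by a constant uniform in the mesh --- in effect a discrete Kreiss / logarithmic-norm estimate for the assembled non-uniform fractional operator --- is the step I expect to be genuinely hard; the consistency estimate, the error recursion, the discrete Gronwall step, and the transcription to the $h_2\le h_1$ and right Riemann--Liouville cases are routine by comparison.
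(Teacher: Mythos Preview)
Your overall framework (consistency, error recursion, stability, Gronwall) is sound, and your identification of the stability bound $\textbf{v}^TW\textbf{D}\textbf{v}\le c\|\textbf{v}\|_{h_1,h_2}^2$ as the crux is exactly right. But you have not actually proved this bound; you outline a plan (decompose $\textbf{D}=G_1E_1+G_2E_2$, control commutators of the nonlocal operators $G_j$ with multiplication by $\eta_j$, bound interpolation defects) and then concede that making it precise is ``genuinely hard''. It is: the fractional difference operators are fully nonlocal, so the commutator $[G_j,\eta_j]$ involves every grid value and there is no obvious reason its $W$-symmetric part is bounded uniformly in $h_1,h_2$. As written, the proposal is a strategy with an unresolved core step rather than a proof.

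The paper takes a different route that bypasses this obstacle entirely. Instead of proving stability of the assembled non-uniform operator $\textbf{D}$, it splits the \emph{numerical solution} as $\textbf{U}^i=\textbf{M}_1\textbf{U}^i+\textbf{M}_2\textbf{U}^i=:\textbf{U}_1^i+\textbf{U}_2^i$ and observes that (\ref{equation3.0.12}) decouples into two schemes, one for $\textbf{U}_1^i$ with matrix $\textbf{A}_1$ and one for $\textbf{U}_2^i$ with matrix $\textbf{A}_2$, each of which is a \emph{uniform}-mesh CN scheme (on mesh $h_1$, respectively $h_2$) approximating the PDE satisfied by $u_1=\mathcal{M}_1^{a,\epsilon}u$, respectively $u_2=\mathcal{M}_2^{a,\epsilon}u$. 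Lemma~\ref{lemma:3.2.1} then gives $\|\textbf{u}_j^i-\textbf{U}_j^i\|\le c(\tau^2+h_j^{l_j})$ directly, with no new stability analysis needed. The only extra work is to control the error at the fine-mesh nodes that lie strictly between coarse-mesh nodes: there the scheme uses linear interpolation (\ref{equation2.4.7}) or (\ref{equation2.4.15}), and the paper shows that the error at such a node is the same convex combination of the two neighbouring coarse-mesh errors plus an $O(\tau^2+h^{l})$ term, so after squaring and summing it is dominated by the coarse-mesh discrete $L^2$ error. The three pieces are then added to give (\ref{equation3.2.5}).

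In short: the paper's key idea, which you missed, is that the mollification not only decomposes the \emph{function} but decomposes the \emph{scheme} into two uniform-mesh schemes for which convergence is already assumed (Lemma~\ref{lemma:3.2.1}); this reduces the whole theorem to bookkeeping plus an interpolation-error estimate, and never requires any spectral or energy analysis of the non-uniform matrix $\textbf{D}$.
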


\begin{proof}
We only prove the case $h_2\leq h_1$ where $u_0(t)=0$. Similar analysis can be done
for the situation $h_1\leq h_2$.

Firstly, let $\textbf{U}^i=\textbf{U}_1^i+\textbf{U}_2^i$, where
$\textbf{U}_j^i:=\textbf{M}_j\textbf{U}^i$, and $\textbf{M}_j$, $j=1,2$,
are defined in (\ref{equation:app.0}) and (\ref{equation:app.15}). Then (\ref{equation3.0.12}) can be rewritten as
\begin{equation}\label{equation3.2._proof_1}
\begin{array}{ll}
&\big(\textbf{I}-\frac{\tau K}{2}\textbf{A}_1\big) \textbf{U}_1^{i+1}+\big(\textbf{I}-\frac{\tau K}{2}\textbf{A}_2\big) \textbf{U}_2^{i+1}\\
=&\big(\textbf{I}+\frac{\tau K}{2}\textbf{A}_1\big) \textbf{U}_1^{i}+\big(\textbf{I}+\frac{\tau K}{2}\textbf{A}_2\big) \textbf{U}_2^{i}
+\tau\textbf{F}^{i+1/2}+\textbf{H}^{i}.
\end{array}
\end{equation}
Let
\begin{equation}\label{equation3.2._proof_2}
\textbf{F}_1^{i+1/2}:=\frac{1}{\tau}\left[\big(\textbf{I}-\frac{\tau K}{2}\textbf{A}_1\big) \textbf{U}_1^{i+1}-\big(\textbf{I}+\frac{\tau K}{2}\textbf{A}_1\big) \textbf{U}_1^{i}\right],
\end{equation}
and
\begin{equation}\label{equation3.2._proof_3}
\textbf{F}_2^{i+1/2}:=\frac{1}{\tau}\left[\big(\textbf{I}-\frac{\tau K}{2}\textbf{A}_2\big) \textbf{U}_2^{i+1}-\big(\textbf{I}+\frac{\tau K}{2}\textbf{A}_2\big) \textbf{U}_2^{i}-\textbf{H}^{i}\right].
\end{equation}
Then
\begin{equation}\label{equation3.2.6}
\left\{ \begin{array}{l}
\big(\textbf{I}-\frac{\tau K}{2}\textbf{A}_1\big) \textbf{U}_1^{i+1}
=\big(\textbf{I}+\frac{\tau K}{2}\textbf{A}_1\big) \textbf{U}_1^{i}
+\tau\textbf{F}_1^{i+1/2},\\
U_{1,n}^{i}=0,~~n=N_1+N_I,\cdots,N-1,\\
U_{1,n}^0=\mathcal{M}_1^{a,\epsilon}\phi_0(\tilde{x}_n),~~n=1,\cdots,N-1,
\end{array} \right.
\end{equation}
and
\begin{equation}\label{equation3.2.7}
\left\{ \begin{array}{l}
\big(\textbf{I}-\frac{\tau K}{2}\textbf{A}_2\big) \textbf{U}_2^{i+1}
=\big(\textbf{I}+\frac{\tau K}{2}\textbf{A}_2\big) \textbf{U}_2^{i}
+\tau\textbf{F}_2^{i+1/2}+\textbf{H}^{i},\\
U_{2,n}^{i}=0,~~n=1,\cdots,N_1+1,\\
U_{2,n}^0=\mathcal{M}_2^{a,\epsilon}\phi_0(\tilde{x}_n),~~n=1,\cdots,N-1,
\end{array} \right.
\end{equation}
are numerical schemes of
\begin{equation}\label{equation3.2.proof_4}
\left\{ \begin{array}{lll}
\frac{\partial u_1(x,t) }{\partial t}&=&K\,_{0}D_x^{\alpha}u_1(x,t)+f_1(x,t) \\
&& ~~~~ ~~~~ ~~~ {\rm for}~~~ (x,t) \in (0,b)\times (0,T),\\
u_1(x,0) &=&\mathcal{M}_1^{a,\epsilon}\phi_0(x) ~~~~ {\rm for}~~~ x \in [0,b], \\
u_1(0,t)&=&0 ~~~~~~~~~~~~~~~~{\rm for}~~~ t\in[0,T],\\
u_1(b,t)&=&\mathcal{M}_1^{a,\epsilon}u_{b}(t) ~~~~~{\rm for}~~~ t\in[0,T],
\end{array} \right.
\end{equation}
and
\begin{equation}\label{equation3.2.proof_5}
\left\{ \begin{array}{lll}
\frac{\partial u_2(x,t) }{\partial t}&=&K\,_{0}D_x^{\alpha}u_2(x,t)+f_2(x,t) \\
&& ~~~~ ~~~~ ~~~ {\rm for}~~~ (x,t) \in (0,b)\times (0,T),\\
u_2(x,0) &=&\mathcal{M}_2^{a,\epsilon}\phi_0(x) ~~~~ {\rm for}~~~ x \in [0,b], \\
u_2(0,t)&=&0 ~~~~~~~~~~~~~~~~{\rm for}~~~ t\in[0,T],\\
u_2(b,t)&=&\mathcal{M}_2^{a,\epsilon}u_{b}(t) ~~~~~{\rm for}~~~ t\in[0,T],
\end{array} \right.
\end{equation}
respectively, where $f_j(x,t)$, $j=1,2$, be a function, which satisfies
\begin{eqnarray}\label{equation3.2.8}
\big[f_j(\tilde{x}_1,t_{i+1/2}),f_j(\tilde{x}_2,t_{i+1/2}),\cdots,
f_j(\tilde{x}_{N-1},t_{i+1/2})]^T&=&\textbf{F}_j^{i+1/2}.
\end{eqnarray}
Since $f_1(\tilde{x}_n,t)+f_2(\tilde{x_n},t)=f(\tilde{x}_n,t)$, $1\leq n\leq N-1$, it is clear that if $u_1(\tilde{x}_n,t)$ and $u_2(\tilde{x}_n,t)$ are the solutions of (\ref{equation3.2.proof_4}) and (\ref{equation3.2.proof_5}) at $(\tilde{x}_n,t)$, respectively, then $u(\tilde{x}_n,t)=u_1(\tilde{x}_n,t)+u_2(\tilde{x}_n,t)$, $1\leq n\leq N-1$, where $u(x,t)$ is the solution of (\ref{equation3.0.1}).

%

Denoting $\textbf{e}_2^i:=\textbf{u}_2^i-\textbf{U}_2^i$, since $e_{2,n}^{i}=0$ for $n=1,2,\cdots,N_1+1$,
by Lemma \ref{lemma:3.2.1}, there is
\begin{equation}\label{equation3.2.11}
h_2\sum_{n=N_1+2}^{N-1}|e_{2,n}^i|^2=\|e_{2}^i\|_{h_1,h_2}^2\leq c(\tau^2+h_2^{l_2})^2.
\end{equation}

For the convenience of evaluating $\textbf{e}_1^i:=\textbf{u}_1^i-\textbf{U}_1^i$, we introduce a linear interpolation
function $v(x):[0,b]\rightarrow \mathbb{R}$,
which goes through the elements of $\textbf{v}=[v_0,v_1,\cdots,v_{N}]^{T}$.
Denote
\begin{equation}\label{equation3.2.13}
\textbf{e}_{1,h_1}^i
=[u_1^i(x_1)-U_1^i(x_1),u_1^i(x_2)-U_1^i(x_2),\cdots,
u_1^i(x_{N_1+\frac{N_I+N_2}{m}})-U_1^i(x_{N_1+\frac{N_I+N_2}{m}})]^T.
\end{equation}
Because these nodes $\left\{U_1^i(x_n)\right\}_{n=1}^{N_1+\frac{N_I+N_2}{m}}$ are computed by use of a $l_1$-th order
approximation, by Lemma \ref{lemma:3.2.1}, we have
\begin{equation}\label{equation3.2.14}
h_1\sum_{n=1}^{N_1+\frac{N_I+N_2}{m}}|e_{1,h_1,n}^i|^2\leq c(\tau^2+h_1^{l_1})^2.
\end{equation}
For the errors
\begin{eqnarray}\label{equation3.2.15}
&&e_{1,h_2}^i(y_{(n-N_1-1)m-N_I+1+q})
\nonumber\\
&:=&u_1^i(y_{(n-N_1-1)m-N_I+1+q})-U_1^i(y_{(n-N_1-1)m-N_I+1+q}),
\end{eqnarray}
where $0\leq q \leq m$, $n=N_1+1,\cdots,N_1+\frac{N_I+N_2}{m}$, from Eq. (\ref{equation2.4.15}), it yields that
\begin{eqnarray}\label{equation3.2.16}
&&e_{1,h_2}^{i+1}(y_{(n-N_1-1)m-N_I+1+q})-e_{1,h_2}^i(y_{(n-N_1-1)m-N_I+1+q})
\nonumber\\
&=&\frac{\tau K}{2h_1^\alpha}\bigg[\left(1-\frac{q}{m}\right)\sum_{n=0}^{n}w_k^{(1,\alpha)}
\left(e_{1,h_1}^{i}(x_{n-k+1})+e_{1,h_1}^{i+1}(x_{n-k+1})\right)
\nonumber\\
&&~~~~~~~~~+\frac{q}{m}\sum_{n=0}^{n+1}w_k^{(1,\alpha)}
\left(e_{1,h_1}^{i}(x_{n-k+2})+e_{1,h_1}^{i+1}(x_{n-k+2})\right)\bigg]
\nonumber\\
&&+c(\tau^3+\tau h_1^{l_1}).
\end{eqnarray}
Particularly, there are
\begin{eqnarray}\label{equation3.2.17}
&&e_{1,h_2}^{i+1}(y_{(n-N_1-1)m-N_I})-e_{1,h_2}^i(y_{(n-N_1-1)m-N_I})
\nonumber\\
&=&e_{1,h_1}^{i+1}(x_{n})-e_{1,h_1}^i(x_{n})
\nonumber\\
&=&\frac{\tau K}{2h_1^\alpha}\sum_{n=0}^{n}w_k^{(1,\alpha)}
\left(e_{1,h_1}^{i}(x_{n-k+1})+e_{1,h_1}^{i+1}(x_{n-k+1})\right)
+c(\tau^3+\tau h_1^{l_1}),
\end{eqnarray}
and
\begin{eqnarray}\label{equation3.2.18}
&&e_{1,h_2}^{i+1}(y_{(n-N_1-1)m-N_I+m})-e_{1,h_2}^i(y_{(n-N_1-1)m-N_I+m})
\nonumber\\
&=&e_{1,h_1}^{i+1}(x_{n+1})-e_{1,h_1}^i(x_{n+1})
\nonumber\\
&=&\frac{\tau K}{2h_1^\alpha}\sum_{n=0}^{n+1}w_k^{(1,\alpha)}
\left(e_{1,h_1}^{i}(x_{n-k+2})+e_{1,h_1}^{i+1}(x_{n-k+2})\right)
+c(\tau^3+\tau h_1^{l_1}).
\end{eqnarray}
Thus,
\begin{eqnarray}\label{equation3.2.19}
&&e_{1,h_2}^{i+1}(y_{(n-N_1-1)m-N_I+1+q})-e_{1,h_2}^i(y_{(n-N_1-1)m-N_I+1+q})
\nonumber\\
&=&\left(1-\frac{q}{m}\right)\left[e_{1,h_1}^{i+1}(x_{n})-e_{1,h_1}^i(x_{n})\right]
+\frac{q}{m}\left[e_{1,h_1}^{i+1}(x_{n+1})-e_{1,h_1}^i(x_{n+1})\right]
\nonumber\\
&&+c(\tau^3+\tau h_1^{l_1}).
\end{eqnarray}
Summing up for all $0\leq k\leq i-1$, we have
\begin{eqnarray}\label{equation3.2.20}
&&e_{1,h_2}^i(y_{(n-N_1-1)m-N_I+1+q})
\nonumber\\
&=&\left(1-\frac{q}{m}\right)e_{1,h_1}^i(x_{n})
+\frac{q}{m}e_{1,h_1}^i(x_{n+1})+c(\tau^2+h_1^{l_1});
\end{eqnarray}
so
\begin{eqnarray}\label{equation3.2.21}
&&\left|e_{1,h_2}^i(y_{(n-N_1-1)m-N_I+1+q})\right|^2
\nonumber\\
&\leq&c\left[|e_{1,h_1}^i(x_{n})|^2
+|e_{1,h_1}^i(x_{n+1})|^2+(\tau^2+h_1^{l_1})^2\right].
\end{eqnarray}
Summing up $q$ and $n$, and multiplying by $h_2$, by the evaluation (\ref{equation3.2.14}),
Eq. (\ref{equation3.2.21}) turns to be
\begin{eqnarray}\label{equation3.2.22}
&&h_2\sum_{n=N_1+1}^{N_1+\frac{N_I+N_2}{m}}\sum_{q=1}^{m-1}
\left|e_{1,h_2}^i(y_{(n-N_1-1)m-N_I+1+q})\right|^2
\nonumber\\
&\leq&c\left[h_2\sum_{q=1}^{m-1}\sum_{n=N_1+1}^{N_1+\frac{N_I+N_2}{m}}|e_{1,h_1}^i(x_{n})|^2
+h_2\sum_{q=1}^{m-1}\sum_{n=N_1+1}^{N_1+\frac{N_I+N_2}{m}}(\tau^2+h_1^{l_1})^2\right]
\nonumber\\
&\leq&c(\tau^2+h_1^{l_1})^2.
\end{eqnarray}

Combining (\ref{equation3.2.11}), (\ref{equation3.2.14}), and (\ref{equation3.2.22}), by the definition
of the discrete norm (\ref{equation3.2.0}), we finally obtain that
\begin{eqnarray}\label{equation3.2.23}
&&\|\textbf{u}^i-\textbf{U}^i\|^2_{h_1,h_2}
\nonumber\\
&\leq&h_2\sum_{n=N_1+2}^{N-1}|e_{2,n}^i|^2
+h_1\sum_{n=1}^{N_1+\frac{N_I+N_2}{m}}|e_{1,h_1,n}^i|^2
\nonumber\\
&&+h_2\sum_{n=N_1+1}^{N_1+\frac{N_I+N_2}{m}}\sum_{q=1}^{m-1}
\left|e_{1,h_2}^i(y_{(n-N_1-1)m-N_I+1+q})\right|^2
\nonumber\\
&\leq& c(\tau^2+h_1^{l_1}+h_2^{l_2})^2,
\end{eqnarray}
which completes the proof.
\end{proof}

For the stability of the general scheme (\ref{equation3.0.12}), it can be discussed almost the same as
the convergence analysis. The details are omitted here, and the stability results are given as follows.

\begin{theorem}\label{theorem3.2.2}
Fix $a$ and $\epsilon$. The scheme (\ref{equation3.0.12}) is unconditionally stable.
\end{theorem}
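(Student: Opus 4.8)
The plan is to follow the argument of Theorem~\ref{theorem3.2.1} almost verbatim, with the numerical solution $\textbf{U}^i$ playing the role there played by the error $\textbf{u}^i-\textbf{U}^i$, and the data $\textbf{U}^0,\textbf{F}^{i+1/2},\textbf{H}^i$ playing the role of the consistency terms. First I would split $\textbf{U}^i=\textbf{U}_1^i+\textbf{U}_2^i$ with $\textbf{U}_j^i:=\textbf{M}_j\textbf{U}^i$, so that, exactly as in (\ref{equation3.2._proof_1})--(\ref{equation3.2.7}), the two pieces satisfy the decoupled Crank--Nicolson systems
\begin{equation*}
\bigl(\textbf{I}-\tfrac{\tau K}{2}\textbf{A}_1\bigr)\textbf{U}_1^{i+1}=\bigl(\textbf{I}+\tfrac{\tau K}{2}\textbf{A}_1\bigr)\textbf{U}_1^{i}+\tau\textbf{F}_1^{i+1/2},\qquad
\bigl(\textbf{I}-\tfrac{\tau K}{2}\textbf{A}_2\bigr)\textbf{U}_2^{i+1}=\bigl(\textbf{I}+\tfrac{\tau K}{2}\textbf{A}_2\bigr)\textbf{U}_2^{i}+\tau\textbf{F}_2^{i+1/2}+\textbf{H}^{i},
\end{equation*}
with $\textbf{F}_1^{i+1/2},\textbf{F}_2^{i+1/2}$ given by (\ref{equation3.2._proof_2})--(\ref{equation3.2._proof_3}) and $\textbf{H}^i$ carrying only the known boundary data. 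After restricting $\textbf{A}_1$ and $\textbf{A}_2$ to the supports of $\textbf{U}_1^i$ and $\textbf{U}_2^i$ and re-indexing onto the genuinely uniform grids of Subsection~\ref{subsec:2.2}, these become precisely the uniform-mesh schemes to which Lemma~\ref{lemma:3.2.1} applies; since we consider only those whose uniform counterparts are unconditionally stable, each piece obeys an a~priori bound of the type $\|\textbf{U}_j^{i}\|\le \|\textbf{U}_j^{0}\|+c\,\tau\sum_{k=0}^{i-1}\bigl(\|\textbf{F}_j^{k+1/2}\|+\|\textbf{H}^k\|\bigr)$ in the relevant uniform discrete $L^2$ norm, valid for all $i$ with no step-size restriction.

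The remaining steps reproduce (\ref{equation3.2.11})--(\ref{equation3.2.23}) with $\textbf{U}_j$ in place of $\textbf{e}_j$. From the linear-interpolation identity (\ref{equation2.4.15}) (or (\ref{equation2.4.7}) in the other case) the value of $\textbf{U}_1^i$ at each intermediate coarse node equals the convex combination $(1-\frac{q}{m})U_{1}^i(x_{n})+\frac{q}{m}U_{1}^i(x_{n+1})$ up to terms already controlled by the fine-node bound; being an average, its $h_2$-weighted square sum is dominated by $c\,\|\textbf{U}_1^i\|^2$ over the fine grid, again with no step-size restriction. Using $\textbf{U}^i=\textbf{U}_1^i+\textbf{U}_2^i$, the triangle inequality for the norm (\ref{equation3.2.1})/(\ref{equation3.2.0}), and the bounds above, one arrives at $\|\textbf{U}^i\|_{h_1,h_2}\le c\bigl(\|\textbf{U}^0\|_{h_1,h_2}+\tau\sum_{k<i}\|\textbf{F}^{k+1/2}\|_{h_1,h_2}+\sum_{k<i}\|\textbf{H}^k\|\bigr)$ for every $1\le i\le M$, where $c$ depends on $\alpha,a,b,\epsilon$ but not on $\tau,h_1,h_2,M$; this is exactly unconditional stability. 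The case $h_1\le h_2$ is symmetric, with $\textbf{U}_1$/$\textbf{U}_2$ and $h_1$/$h_2$ swapped and the $u_0\ne0$ adjustment of Remark~\ref{remark:2.4.3} inserted.

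The main obstacle is the structural bookkeeping rather than any new inequality: one must check carefully that, once restricted and re-labelled, $\textbf{A}_1$ and $\textbf{A}_2$ really coincide with the stable uniform-mesh differentiation matrices --- in particular that the interpolation rows (\ref{equation2.4.15}), (\ref{equation2.4.7}) and the boundary-correction vector $\textbf{H}^i$ enter only as bounded inhomogeneities and never through the amplification operator $\bigl(\textbf{I}-\frac{\tau K}{2}\textbf{A}_j\bigr)^{-1}\bigl(\textbf{I}+\frac{\tau K}{2}\textbf{A}_j\bigr)$. Once these facts --- already implicit in the proof of Theorem~\ref{theorem3.2.1} --- are in place, the conclusion follows by the same discrete Gronwall-type summation, which is why the details were omitted in the text.
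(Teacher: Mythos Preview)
Your proposal is correct and follows exactly the approach the paper indicates: the authors state only that the stability ``can be discussed almost the same as the convergence analysis'' and omit the details, and your sketch faithfully mirrors the proof of Theorem~\ref{theorem3.2.1} with $\textbf{U}^i$ in place of $\textbf{e}^i$ and the data in place of the consistency residuals. Your identification of the main technical point---that after restriction and re-indexing the operators $\textbf{A}_1,\textbf{A}_2$ reduce to the uniform-mesh matrices whose unconditional stability is assumed in Lemma~\ref{lemma:3.2.1}---is precisely the content the paper leaves implicit.
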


Denoting the iteration matrix of (\ref{equation3.0.12}) as
$\textbf{G}=\big(\textbf{I}-\frac{\tau K}{2}\textbf{D}\big)^{-1}\big(\textbf{I}+\frac{\tau K}{2}\textbf{D}\big)$,
Figs. \ref{fig:3.2.1}-\ref{fig:3.2.4} numerically show that the spectral radius of $\textbf{G}$ is indeed no bigger than $1$ for all
$\alpha\in(1,2)$, and different partitions, with $a=\epsilon=1$, $m=2$, $b=4$, and $\tau=1/400$.

\begin{figure}[!htbp]
\includegraphics[scale=0.4]{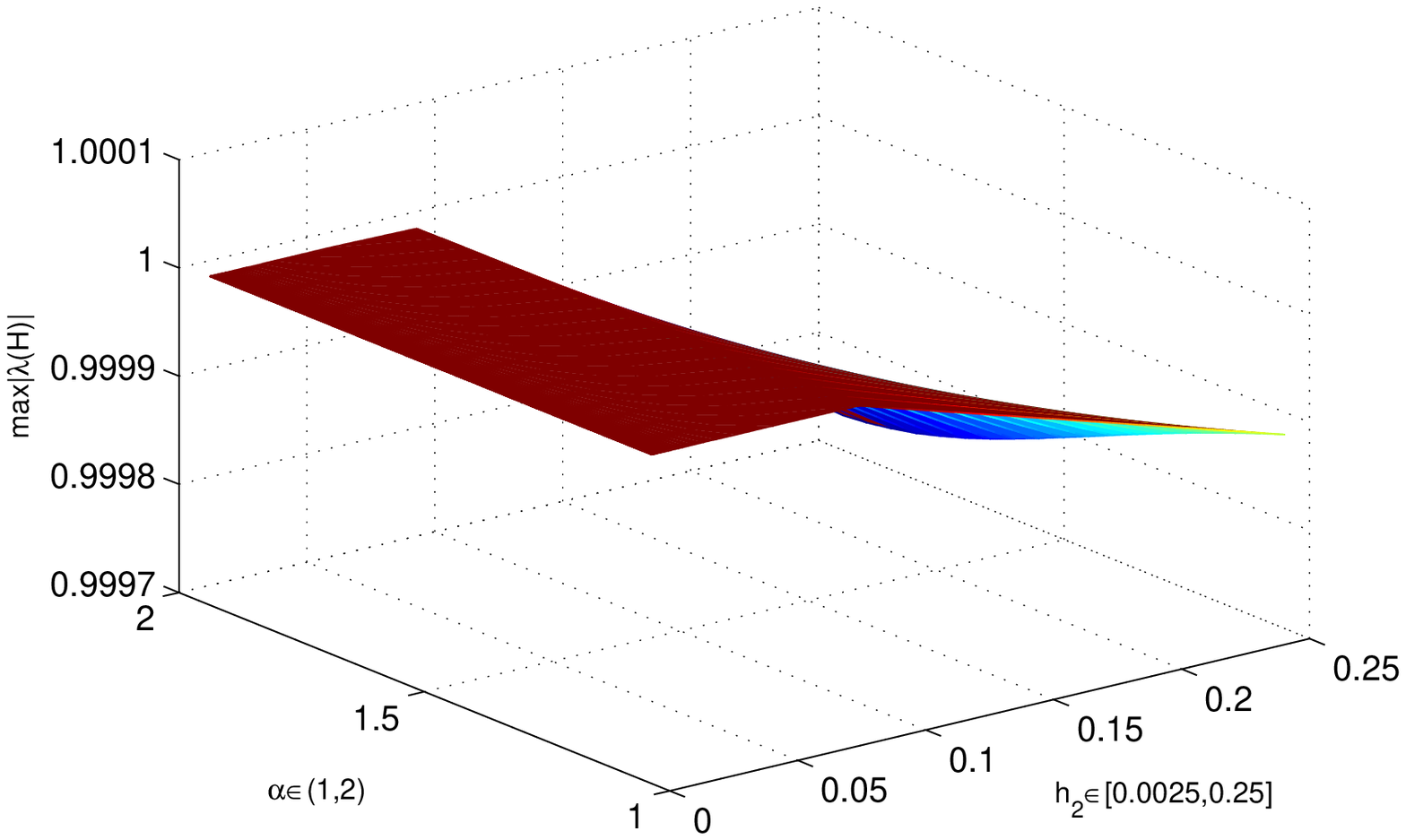}\\
\caption{The relationship among $\max|\lambda(\textbf{G})|$,
$\alpha$ and $h_2$ for the scheme (\ref{equation3.0.12})  in which $l_1=l_2=1$ and
$[d_{-1}^{(1)},d_0^{(1)},d_1^{(1)}]=[d_{-1}^{(2)},d_0^{(2)},d_1^{(2)}]=[0,0,1]$ on the partition Case 1
of Subsection \ref{subsec:2.2} with $a=\epsilon=1$, $m=2$, $b=4$, and $\tau=1/400$.}\label{fig:3.2.1}
\end{figure}

\begin{figure}[!htbp]
\includegraphics[scale=0.4]{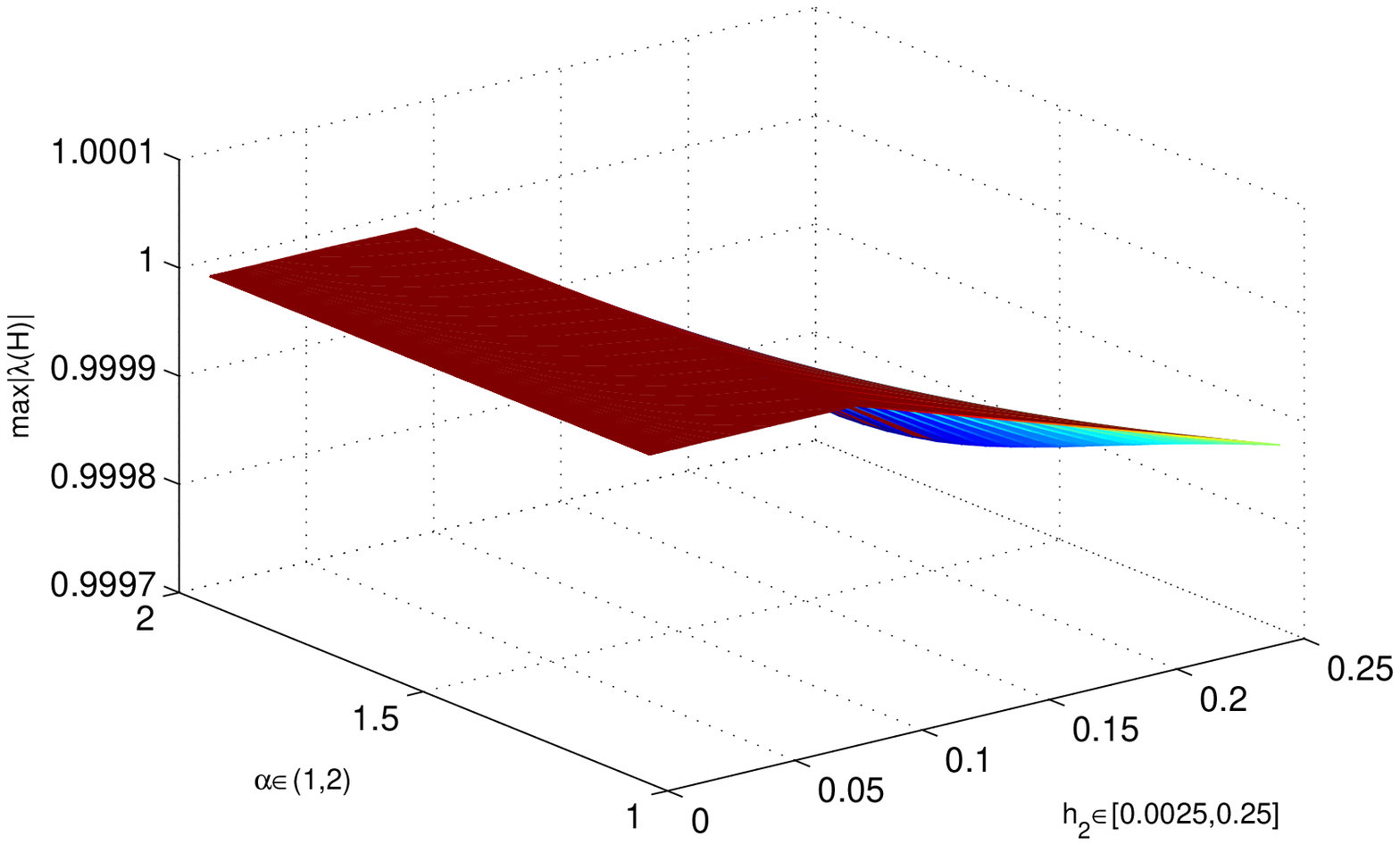}\\
\caption{The relationship among $\max|\lambda(\textbf{G})|$,
$\alpha$ and $h_1$ for the scheme (\ref{equation3.0.12}) in which $l_1=l_2=1$ and
$[d_{-1}^{(1)},d_0^{(1)},d_1^{(1)}]=[d_{-1}^{(2)},d_0^{(2)},d_1^{(2)}]=[0,0,1]$ on the partitions Case 2
of Subsection \ref{subsec:2.2} with $a=\epsilon=1$, $m=2$, $b=4$, and $\tau=1/400$.}\label{fig:3.2.2}
\end{figure}


\begin{figure}[!htbp]
\includegraphics[scale=0.4]{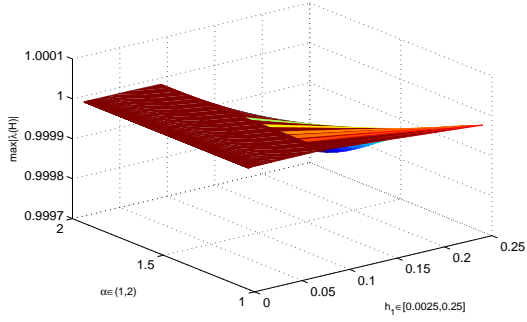}\\
\caption{The relationship among $\max|\lambda(\textbf{G})|$,
$\alpha$ and $h_1$ for the scheme (\ref{equation3.0.12}) in which $l_1=l_2=2$ and
$[d_{-1}^{(1)},d_0^{(1)},d_1^{(1)}]=[d_{-1}^{(2)},d_0^{(2)},d_1^{(2)}]=[0,1-\frac{\alpha}{2},\frac{\alpha}{2}]$
on the partitions Case 2
of Subsection \ref{subsec:2.2} with $a=\epsilon=1$, $m=2$, $b=4$, and $\tau=1/400$.}\label{fig:3.2.4}
\end{figure}

\section{Numerical Experiments}\label{sec:4}

We perform numerical experiments in this section to confirm the theoretical analysis and convergence orders.
For the convenience of the following presentation, we denote $(11+11)$ as the scheme (\ref{equation3.0.12}) which combines
two first order shifted Gr\"{u}nwald  formula \cite{Meerschaert:04}, i.e.,
$[d_{-1}^{(1)},d_0^{(1)},d_1^{(1)}]=[d_{-1}^{(2)},d_0^{(2)},d_1^{(2)}]=[0,0,1]$;
denote $(21+21)$ as the scheme (\ref{equation3.0.12}) which combines two second order finite difference schemes
with $[d_{-1}^{(1)},d_0^{(1)},d_1^{(1)}]=[d_{-1}^{(2)},d_0^{(2)},d_1^{(2)}]=[0,1-\frac{\alpha}{2},\frac{\alpha}{2}]$;
denote $(22+22)$ as the scheme (\ref{equation3.0.12}) which combines two second order finite difference schemes
with $[d_{-1}^{(1)},d_0^{(1)},d_1^{(1)}]=[d_{-1}^{(2)},d_0^{(2)},d_1^{(2)}]
=[\frac{2-\alpha}{4},0,\frac{2+\alpha}{4}]$.
\begin{example}\label{example4.0.2}
Consider the following problem
\begin{equation}\label{equation4.0.2}
\frac{\partial u(x,t)}{\partial t}=
\,_{a}D_{x}^{\alpha}u(x,t)+f(x,t),
~~~(x,t)\in(0,4)\times(0,1],
\end{equation}
with the source term
\begin{eqnarray*}
&&f(x,t)
\nonumber\\
&=&-e^{-t}\big(x^{2+\alpha}+x^2+\frac{\Gamma(3+\alpha)}{2}x^2+\frac{2}{\Gamma(3-\alpha)}x^{2-\alpha}\big),
\end{eqnarray*}
and the boundary conditions
\begin{equation}
u(0,t)=0,~~~u(4,t)=e^{-t}(16+4^{2+\alpha}),~~~t\in[0,1],
\end{equation}
and the initial value
\begin{equation}
u(x,0)=x^{2+\alpha}+x^2,~~~x\in[0,4].
\end{equation}
Then the exact solutions of (\ref{equation4.0.2}) is $e^{-t}\left(x^{2+\alpha}+x^2\right)$.
\end{example}

By applying non-uniform schemes with $\tau=1/400$, and $h_2=h_1/2$ for scheme $(11+11)$, $(21+21)$ as well as $(22+22)$, we can see from Table \ref{table4.0.5} that the optimal convergence orders can be obtained.

\begin{table}
\caption{The discrete $L^2$ errors ($e_{h_1,h_2}=\|u-U\|_{h_1,h_2}$) and their convergence rates to Example \ref{example4.0.2} at $t=1$
by using several specific schemes on non-uniform meshes
for different $\alpha$ with $a=\epsilon=1$, $h_2=h_1/2$, and $\tau=1/400$.}\label{table4.0.5}
\begin{tabular}{cccccccccc}
\hline\noalign{\smallskip}
\multicolumn{2}{c}{$\alpha$} &\multicolumn{2}{c}{$1.2$} &\multicolumn{2}{c}{$1.4$}
&\multicolumn{2}{c}{$1.6$}&\multicolumn{2}{c}{$1.8$}\\
\noalign{\smallskip}\hline\noalign{\smallskip}
scheme &$h_1$ & $e_{h_1,h_2}$ & rate & $e_{h_1,h_2}$ & rate
& $e_{h_1,h_2}$ & rate &  $e_{h_1,h_2}$ & rate\\
\noalign{\smallskip}\hline\noalign{\smallskip}
             & 1/8 & 5.13 1e-1 & -      & 4.35 1e-1 & -   & 3.47 1e-1 & -   & 2.38 1e-1 & -   \\
             & 1/16& 2.58 1e-1 & 0.99   & 2.07 1e-1 & 1.00& 1.71 1e-1 & 1.02& 1.07 1e-1 & 1.15 \\
(11+11)      & 1/32& 1.29 1e-1 & 1.00   & 1.08 1e-1 & 1.00& 8.49 1e-2 & 1.01& 5.21 1e-2 & 1.04 \\
             & 1/64& 6.47 1e-2 & 1.00   & 5.42 1e-2 & 1.00& 4.24 1e-2 & 1.00& 2.59 1e-2 & 1.01 \\
             &1/128& 3.24 1e-2 & 1.00   & 2.71 1e-2 & 1.00& 2.12 1e-2 & 1.00& 1.29 1e-2 & 1.00 \\
\noalign{\smallskip}\hline\noalign{\smallskip}
             & 1/8 & 4.90 1e-2 & -      & 5.26 1e-2 & -   & 6.79 1e-2 & -   & 1.07 1e-1 & -   \\
             & 1/16& 1.27 1e-2 & 1.95   & 1.41 1e-2 & 1.90& 1.85 1e-2 & 1.88& 2.70 1e-2 & 1.99 \\
(21+21)      & 1/32& 3.22 1e-3 & 1.98   & 3.51 1e-3 & 2.01& 4.50 1e-3 & 2.04& 6.49 1e-3 & 2.05 \\
             & 1/64& 8.09 1e-4 & 1.99   & 8.53 1e-4 & 2.04& 1.05 1e-3 & 2.10& 1.44 1e-3 & 2.17 \\
             &1/128& 2.06 1e-4 & 1.98   & 2.09 1e-4 & 2.03& 2.42 1e-4 & 2.12& 3.03 1e-4 & 2.25 \\
\noalign{\smallskip}\hline\noalign{\smallskip}
             & 1/8 & 8.71 1e-2 & -      & 7.48 1e-2 & -   & 8.20 1e-2 & -   & 1.14 1e-1 & -   \\
             & 1/16& 2.18 1e-2 & 2.00   & 1.92 1e-2 & 1.96& 2.10 1e-2 & 1.97& 2.73 1e-2 & 2.06 \\
(22+22)      & 1/32& 5.45 1e-3 & 2.00   & 4.81 1e-3 & 2.00& 5.20 1e-3 & 2.01& 6.70 1e-3 & 2.03 \\
             & 1/64& 1.36 1e-3 & 2.00   & 1.19 1e-2 & 2.02& 1.24 1e-3 & 2.07& 1.50 1e-3 & 2.16 \\
             &1/128& 3.44 1e-4 & 1.99   & 2.95 1e-4 & 2.01& 2.94 1e-4 & 2.07& 3.23 1e-4 & 2.21 \\
\noalign{\smallskip}\hline
\end{tabular}
\end{table}


\begin{example}\label{example4.0.3}
Consider the following problem
\begin{equation}\label{equation4.0.3}
\frac{\partial u(x,t)}{\partial t}=
\,_{a}D_{x}^{\alpha}u(x,t)+f,~~~(x,t)\in(0,4)\times(0,1],
\end{equation}
with the source term
\begin{eqnarray*}
&&f(x,t)
\nonumber\\
&=&-e^{-t}\bigg(\frac{\Gamma(1+\frac{\alpha}{4})}{\Gamma(1-\frac{3\alpha}{4})}x^{-3\alpha/4}
+\frac{1}{\Gamma(1-\alpha)}x^{-\alpha}
+x^{\alpha/4}+1\bigg),
\end{eqnarray*}
and the boundary conditions
\begin{equation}
u(0,t)=e^{-t},~~~u(4,t)=\left(4^{\alpha/4}+1 \right)e^{-t},~~~t\in[0,1],
\end{equation}
and the initial value
\begin{equation}
u(x,0)=x^{\alpha/4}+1,~~~x\in[0,4].
\end{equation}
Then the exact solution of (\ref{equation4.0.3}) is $e^{-t}\left(x^{\alpha/4}+1\right)$.
\end{example}

Table \ref{table4.0.6} shows that by using the usual shifted Gr\"{u}nwald  formula on uniform meshes, the convergence orders of
the approximations to the solution of Example \ref{example4.0.3} are less than one.
While by applying the non-uniform scheme $(11+11)$, with $a=\epsilon=1$, $h_1=h_2/5$, and $\tau=1/400$, we can see
that errors caused by $h_2$ are dominant, and thus the desired first order convergence is reached.

\begin{table}
\caption{The discrete $L^2$ errors  ($e_{h_1,h_2}=\|u-U\|_{h_1,h_2}$) and their convergence rates to Example \ref{example4.0.3} at $t=1$
by using uniform as well as non-uniform first order methods
for different $\alpha$, where $a=\epsilon=1$ and $\tau=1/400$.}\label{table4.0.6}
\begin{tabular}{cccccccccc}
\hline\noalign{\smallskip}
\multicolumn{2}{c}{$\alpha$}   &\multicolumn{2}{c}{1.2} &\multicolumn{2}{c}{1.4}
       &\multicolumn{2}{c}{1.6} &\multicolumn{2}{c}{1.8}\\
\noalign{\smallskip}\hline\noalign{\smallskip}
$scheme$&$h_2$& $e_{h_1,h_2}$& rate & $e_{h_1,h_2}$ & rate & $e_{h_1,h_2}$ & rate & $e_{h_1,h_2}$ & rate\\
\noalign{\smallskip}\hline\noalign{\smallskip}
           & 1/8 & 1.13 1e-2 & -      & 1.09 1e-2 & -   & 1.46 1e-2 & -   & 2.56 1e-2 & -   \\
S-G-L      & 1/16& 6.98 1e-3 & 0.70   & 6.51 1e-3 & 0.74& 8.84 1e-3 & 0.73& 1.66 1e-2 & 0.63 \\
$(h_1=h_2)$& 1/32& 4.19 1e-3 & 0.74   & 3.80 1e-3 & 0.78& 5.23 1e-3 & 0.76& 1.07 1e-2 & 0.64 \\
           &1/64 & 2.47 1e-3 & 0.77   & 2.18 1e-3 & 0.80& 3.05 1e-3 & 0.78& 6.79 1e-3 & 0.65 \\
           &1/128& 1.45 1e-3 & 0.77   & 1.24 1e-3 & 0.81& 1.76 1e-3 & 0.79& 4.30 1e-3 & 0.66 \\
\noalign{\smallskip}\hline\noalign{\smallskip}
             & 1/8 & 2.22 1e-2 & -      & 1.62 1e-2 & -   & 2.19 1e-2 & -   & 5.09 1e-2 & -   \\
(11+11)      & 1/16& 1.12 1e-2 & 0.99   & 7.33 1e-3 & 1.15& 6.00 1e-3 & 1.87& 9.39 1e-3 & 2.44 \\
$(h_1=h_2/5)$& 1/32& 5.55 1e-3 & 1.01   & 3.43 1e-3 & 1.10& 2.60 1e-3 & 1.21& 4.62 1e-3 & 1.02 \\
             & 1/64& 2.79 1e-3 & 0.99   & 1.71 1e-3 & 1.00& 1.26 1e-3 & 1.04& 2.47 1e-3 & 0.91 \\
             &1/128& 1.41 1e-3 & 0.98   & 8.68 1e-4 & 0.98& 6.63 1e-4 & 0.93& 1.51 1e-3 & 0.71 \\
\noalign{\smallskip}\hline
\end{tabular}
\end{table}


\begin{example}\label{example4.0.5}
Consider the following problem
\begin{equation}\label{equation4.0.5}
\frac{\partial u(x,t)}{\partial t}=
\,_{a}D_{x}^{\alpha}u(x,t)+f(x,t),
~~~(x,t)\in(0,4)\times(0,1],
\end{equation}
with the source term
\begin{eqnarray*}
&&f(x,t)
\nonumber\\
&=&-e^{-t}\big(x^{1+\alpha}+x+\Gamma(2+\alpha)x+\frac{1}{\Gamma(2-\alpha)}x^{1-\alpha}\big),
\end{eqnarray*}
and the boundary conditions
\begin{equation}
u(0,t)=0,~~~u(4,t)=e^{-t}(4+4^{1+\alpha}),~~~t\in[0,1],
\end{equation}
and the initial value
\begin{equation}
u(x,0)=x^{1+\alpha}+x,~~~x\in[0,4].
\end{equation}
Then the exact solutions of (\ref{equation4.0.5}) is $e^{-t}\left(x^{1+\alpha}+x\right)$.
\end{example}


From Table \ref{table4.0.8} we can see that by using second order schemes on uniform meshes,
the convergence order of the approximations to the solution of Example \ref{example4.0.5} can not reach to 2. While by applying non-uniform schemes with $\tau=1/400$, and $h_1=h_2/5$ for schemes $(21+21)$ as well as $(22+22)$, we can see from Table \ref{table4.0.9} that errors caused by $h_2$ are dominant, and thus the optimal convergence orders can be obtained.

\begin{table}
\caption{The discrete $L^2$ errors and their convergence rates to Example \ref{example4.0.5} at $t=1$
by using specific schemes on uniform meshes
for different $\alpha$ with $\tau=1/400$.}\label{table4.0.8}
\begin{tabular}{cccccccccc}
\hline\noalign{\smallskip}
\multicolumn{2}{c}{$\alpha$} &\multicolumn{2}{c}{$1.2$} &\multicolumn{2}{c}{$1.4$}
&\multicolumn{2}{c}{$1.6$}&\multicolumn{2}{c}{$1.8$}\\
\noalign{\smallskip}\hline\noalign{\smallskip}
scheme&$h$ & $\|u-U\|$ & rate & $\|u-U\|$ & rate& $\|u-U\|$ & rate & $\|u-U\|$ & rate\\
\noalign{\smallskip}\hline\noalign{\smallskip}
       & 1/8  & 1.12 1e-3 & -    & 1.83 1e-3 & -    & 2.49 1e-3 & -    & 2.50 1e-3 & - \\
       & 1/16 & 3.72 1e-4 & 1.59 & 6.62 1e-4 & 1.47 & 9.87 1e-4 & 1.34 & 1.11 1e-3 & 1.18 \\
21     & 1/32 & 1.28 1e-4 & 1.53 & 2.42 1e-4 & 1.45 & 3.91 1e-4 & 1.34 & 4.90 1e-4 & 1.17\\
       & 1/64 & 4.57 1e-5 & 1.49 & 8.89 1e-5 & 1.44 & 1.55 1e-4 & 1.33 & 2.18 1e-4 & 1.17\\
       & 1/128& 1.79 1e-5 & 1.35 & 3.38 1e-5 & 1.39 & 6.25 1e-5 & 1.31 & 9.79 1e-5 & 1.15\\
\noalign{\smallskip}\hline\noalign{\smallskip}
       & 1/8  & 3.42 1e-3 & -    & 2.03 1e-3 & -    & 2.17 1e-3 & -    & 2.38 1e-3 & - \\
       & 1/16 & 1.25 1e-3 & 1.45 & 7.67 1e-4 & 1.40 & 8.72 1e-4 & 1.32 & 1.05 1e-3 & 1.17 \\
22     & 1/32 & 4.49 1e-4 & 1.48 & 2.83 1e-4 & 1.44 & 3.53 1e-4 & 1.30 & 4.73 1e-4 & 1.16\\
       & 1/64 & 1.60 1e-5 & 1.49 & 1.04 1e-4 & 1.45 & 1.43 1e-4 & 1.30 & 2.13 1e-4 & 1.15\\
       & 1/128& 5.74 1e-5 & 1.48 & 3.90 1e-5 & 1.42 & 5.88 1e-5 & 1.28 & 9.65 1e-5 & 1.14\\
\noalign{\smallskip}\hline
\end{tabular}
\end{table}

\begin{table}
\caption{The discrete $L^2$ errors ($e_{h_1,h_2}=\|u-U\|_{h_1,h_2}$) and their convergence rates to Example \ref{example4.0.5} at $t=1$
by using specific schemes on non-uniform meshes
for different $\alpha$ with $a=\epsilon=1$, $h_1=h_2/5$ and $\tau=1/400$.}\label{table4.0.9}
\begin{tabular}{cccccccccc}
\hline\noalign{\smallskip}
\multicolumn{2}{c}{$\alpha$} &\multicolumn{2}{c}{$1.2$} &\multicolumn{2}{c}{$1.4$}
&\multicolumn{2}{c}{$1.6$}&\multicolumn{2}{c}{$1.8$}\\
\noalign{\smallskip}\hline\noalign{\smallskip}
scheme &$h_2$ & $e_{h_1,h_2}$ & rate & $e_{h_1,h_2}$ & rate
& $e_{h_1,h_2}$ & rate &  $e_{h_1,h_2}$ & rate\\
\noalign{\smallskip}\hline\noalign{\smallskip}
\noalign{\smallskip}\hline\noalign{\smallskip}
             & 1/8 & 6.18 1e-2 & -      & 9.13 1e-2 & -   & 1.90 1e-1 & -   & 5.78 1e-1 & -   \\
(21+21)      & 1/16& 2.28 1e-2 & 1.44   & 3.76 1e-2 & 1.28& 5.70 1e-2 & 1.73& 9.40 1e-2 & 2.62 \\
             & 1/32& 5.50 1e-3 & 2.05   & 8.43 1e-3 & 2.15& 1.46 1e-2 & 1.96& 3.51 1e-2 & 1.42 \\
             & 1/64& 1.27 1e-3 & 2.11   & 1.72 1e-3 & 2.29& 3.34 1e-3 & 2.13& 8.50 1e-3 & 2.05 \\
             &1/128& 2.77 1e-4 & 2.20   & 3.42 1e-4 & 2.33& 7.95 1e-4 & 2.07& 1.88 1e-3 & 2.18 \\
\noalign{\smallskip}\hline\noalign{\smallskip}
             & 1/8 & 8.01 1e-2 & -      & 1.02 1e-1 & -   & 1.90 1e-1 & -   & 5.70 1e-1 & -   \\
(22+22)      & 1/16& 2.57 1e-2 & 1.64   & 3.84 1e-2 & 1.41& 5.66 1e-2 & 1.75& 9.32 1e-2 & 2.61 \\
             & 1/32& 6.16 1e-3 & 2.06   & 8.69 1e-3 & 2.14& 1.47 1e-2 & 1.94& 3.50 1e-2 & 1.41 \\
             & 1/64& 1.43 1e-3 & 2.11   & 1.79 1e-3 & 2.28& 3.36 1e-3 & 2.13& 8.48 1e-3 & 2.04 \\
             &1/128& 3.20 1e-4 & 2.16   & 3.64 1e-4 & 2.30& 7.98 1e-4 & 2.08& 1.88 1e-3 & 2.18 \\
\noalign{\smallskip}\hline
\end{tabular}
\end{table}
\begin{example}\label{example4.0.4}
Consider the following problem
\begin{equation}\label{equation4.0.4}
\frac{\partial u(x,t)}{\partial t}=
\,_{a}D_{x}^{\alpha}u(x,t)+f(x,t),
~~~(x,t)\in(0,4)\times(0,1],
\end{equation}
with the source term
\begin{eqnarray*}
&&f(x,t)
\nonumber\\
&=&-e^{-t}\big(x^{1+|\alpha-1.5|/2}
+\frac{\Gamma(2+|\alpha-1.5|/2)}{\Gamma(2-\alpha+|\alpha-1.5|/2)}x^{1-\alpha+|\alpha-1.5|/2}\big),
\end{eqnarray*}
and the boundary conditions
\begin{equation}
u(0,t)=0,~~~u(4,t)=e^{-t}(4^{1+|\alpha-1.5|/2}),~~~t\in[0,1],
\end{equation}
and the initial value
\begin{equation}
u(x,0)=x^{1+|\alpha-1.5|/2},~~~x\in[0,4].
\end{equation}
Then the exact solutions of (\ref{equation4.0.4}) is $e^{-t}(x^{1+|\alpha-1.5|/2})$.
\end{example}

From Table \ref{table4.0.7} we can see that by using second order schemes on uniform meshes,
the convergence order of the approximations to the solution of Example \ref{example4.0.4} can not reach to 2. While by applying non-uniform schemes with $\tau=1/400$, and $h_1=h_2/5$ for schemes $(21+21)$ as well as $(22+22)$, we can see from Table \ref{table4.0.8} that errors caused by $h_2$
are dominant, and thus the optimal convergence orders can be obtained.

\begin{table}
\caption{The discrete $L^2$ errors and their convergence rates to Example \ref{example4.0.4} at $t=1$
by using specific schemes on uniform meshes
for different $\alpha$ with $\tau=1/400$.}\label{table4.0.7}
\begin{tabular}{cccccccccc}
\hline\noalign{\smallskip}
\multicolumn{2}{c}{$\alpha$} &\multicolumn{2}{c}{$1.2$} &\multicolumn{2}{c}{$1.4$}
&\multicolumn{2}{c}{$1.6$}&\multicolumn{2}{c}{$1.8$}\\
\noalign{\smallskip}\hline\noalign{\smallskip}
scheme&$h$ & $\|u-U\|$ & rate & $\|u-U\|$ & rate& $\|u-U\|$ & rate & $\|u-U\|$ & rate\\
\noalign{\smallskip}\hline\noalign{\smallskip}
       & 1/8  & 8.19 1e-4 & -    & 1.25 1e-3 & -    & 2.09 1e-3 & -    & 2.30 1e-3 & - \\
       & 1/16 & 2.41 1e-4 & 1.76 & 4.57 1e-4 & 1.45 & 8.27 1e-4 & 1.34 & 9.70 1e-4 & 1.25\\
21     & 1/32 & 7.41 1e-5 & 1.70 & 1.65 1e-4 & 1.47 & 3.22 1e-4 & 1.36 & 3.99 1e-4 & 1.28\\
       & 1/64 & 2.32 1e-5 & 1.68 & 5.89 1e-5 & 1.48 & 1.24 1e-4 & 1.38 & 1.62 1e-4 & 1.30\\
       & 1/128& 7.44 1e-6 & 1.64 & 2.10 1e-5 & 1.49 & 4.73 1e-5 & 1.39 & 6.53 1e-5 & 1.31\\
\noalign{\smallskip}\hline\noalign{\smallskip}
       & 1/8  & 3.92 1e-3 & -    & 2.10 1e-3 & -    & 1.68 1e-3 & -    & 2.00 1e-3 & - \\
       & 1/16 & 1.19 1e-3 & 1.72 & 7.12 1e-4 & 1.56 & 7.09 1e-4 & 1.25 & 8.90 1e-4 & 1.17 \\
22     & 1/32 & 3.70 1e-4 & 1.69 & 2.47 1e-4 & 1.53 & 2.87 1e-4 & 1.30 & 3.78 1e-4 & 1.23\\
       & 1/64 & 1.16 1e-4 & 1.67 & 8.59 1e-5 & 1.52 & 1.13 1e-4 & 1.34 & 1.56 1e-4 & 1.27\\
       & 1/128& 3.69 1e-5 & 1.66 & 3.00 1e-5 & 1.52 & 4.41 1e-5 & 1.36 & 6.37 1e-5 & 1.30\\
\noalign{\smallskip}\hline
\end{tabular}
\end{table}

\begin{table}
\caption{The discrete $L^2$ errors ($e_{h_1,h_2}=\|u-U\|_{h_1,h_2}$) and their convergence rates to Example \ref{example4.0.4} at $t=1$
by using specific schemes on non-uniform meshes
for different $\alpha$ with $a=\epsilon=1$, $h_1=h_2/5$, and $\tau=1/400$.}\label{table4.0.8}
\begin{tabular}{cccccccccc}
\hline\noalign{\smallskip}
\multicolumn{2}{c}{$\alpha$} &\multicolumn{2}{c}{$1.2$} &\multicolumn{2}{c}{$1.4$}
&\multicolumn{2}{c}{$1.6$}&\multicolumn{2}{c}{$1.8$}\\
\noalign{\smallskip}\hline\noalign{\smallskip}
scheme &$h_1$ & $e_{h_1,h_2}$ & rate & $e_{h_1,h_2}$ & rate
& $e_{h_1,h_2}$ & rate &  $e_{h_1,h_2}$ & rate\\
\noalign{\smallskip}\hline\noalign{\smallskip}
             & 1/8 & 1.73 1e-2 & -      & 1.97 1e-2 & -   & 3.49 1e-2 & -   & 9.54 1e-2 & -   \\
(21+21)      & 1/16& 6.10 1e-3 & 1.51   & 7.37 1e-3 & 1.42& 9.25 1e-3 & 1.92& 1.50 1e-2 & 2.67 \\
             & 1/32& 1.47 1e-3 & 2.05   & 1.66 1e-3 & 2.15& 2.47 1e-3 & 1.91& 5.53 1e-3 & 1.44 \\
             & 1/64& 3.38 1e-4 & 2.12   & 3.42 1e-4 & 2.28& 5.76 1e-4 & 2.10& 1.33 1e-3 & 2.06 \\
             &1/128& 7.39 1e-5 & 2.19   & 6.96 1e-5 & 2.30& 1.38 1e-4 & 2.06& 2.93 1e-4 & 2.18 \\
\noalign{\smallskip}\hline\noalign{\smallskip}
             & 1/8 & 2.31 1e-2 & -      & 2.24 1e-2 & -   & 3.52 1e-2 & -   & 9.43 1e-2 & -   \\
(22+22)      & 1/16& 7.07 1e-3 & 1.70   & 7.67 1e-3 & 1.55& 9.26 1e-3 & 1.93& 1.50 1e-2 & 2.65 \\
             & 1/32& 1.70 1e-3 & 2.06   & 1.74 1e-3 & 2.14& 2.49 1e-3 & 1.89& 5.52 1e-3 & 1.44 \\
             & 1/64& 3.95 1e-4 & 2.10   & 3.65 1e-4 & 2.26& 5.83 1e-4 & 2.10& 1.33 1e-3 & 2.05 \\
             &1/128& 8.93 1e-5 & 2.14   & 7.65 1e-5 & 2.25& 1.39 1e-4 & 2.06& 2.92 1e-4 & 2.19 \\
\noalign{\smallskip}\hline
\end{tabular}
\end{table}
\section{Conclusions} \label{sec:5}
It is well known that most of the time putting more nodes around singularities and less nodes in smooth region is required in numerically solving PDEs. However, for this kind of meshes, it is not easy to get a convergent finite difference scheme of space fractional operators. This paper provides a basic strategy to overcome this challenge: based on the technique of mollification, to be approximated solution is firstly decomposed into the exact sum of some functions which are no lower regular than the original function and have relatively small common support; then the functions are discretized with different stepsizes; adding all the discretizations of the functions leads to the approximation of the solution on non-uniform meshes; when solving the equation, may the techniques of interpolation be needed in the approximation. In fact, by this way, it is very flexible to design numerical schemes because of the independence of the mollified functions; similar to the h-p finite element methods, both the h approximation (different stepsizes are used) and the p approximation (low and high order schemes are simultaneously used) work well. The proposed schemes, including the h and p approximations, are theoretically and numerically discussed in detail. We rigorously prove their optimal convergence and unconditional stability. The extensive numerical experiments are performed to show the powerfulness of the schemes and confirm the theoretical results. The ideas given in this paper are expected to stimulate more research in the numerical solutions of fractional PDEs.

\appendix
\renewcommand{\appendixname}{Appendix~\Alph{section}}
\section*{Appendix A}

\begin{enumerate}

\item{Elements in the scheme of left Riemann-Liouville problem}\\

Here we show the specific expressions of the matrices and vectors in (\ref{equation3.0.12}), where
\begin{eqnarray}
\textbf{D}=\textbf{D}_1+\textbf{D}_2,\label{equation:app.1}\\
\textbf{D}_1=\textbf{A}_1\textbf{M}_1,\label{equation:app.2}\\
\textbf{D}_2=\textbf{A}_2\textbf{M}_2,\label{equation:app.3}\\
\textbf{M}_1+\textbf{M}_2=\textbf{I}\label{equation:app.0}.
\end{eqnarray}

If $h_1\leq h_2$, then

\begin{equation}\label{equation:app.7}
\begin{array}{l}\textbf{A}_1=h_1^{-\alpha}\cdot
\left[ \begin{array}{cccccc;{2pt/2pt}ccc}
g_1^{(\alpha)} & g_0^{(\alpha)} & &&&&&&           \\
g_2^{(\alpha)}  & g_1^{(\alpha)} & g_0^{(\alpha)}&&&&&        \\
 \ddots &\ddots  &  \ddots    &   \ddots  &&&&\\
g_{N_1+N_I-1}^{(\alpha)}&g_{N_1+N_I-2}^{(\alpha)}&\cdots &
g_{2}^{(\alpha)}&g_{1}^{(\alpha)}&g_{0}^{(\alpha)}&0&\cdots&0 \\
g_{N_1+N_I}^{(\alpha)}&g_{N_1+N_I-1}^{(\alpha)}&\cdots &
g_{3}^{(\alpha)}&g_{2}^{(\alpha)}&g_{1}^{(\alpha)}&0&\cdots&0 \\
\hdashline[2pt/2pt]
g_{N_1+N_I+m}^{(\alpha)}&g_{N_1+N_I+m-1}^{(\alpha)}&\cdots &g_{m+3}^{(\alpha)}&g_{m+2}^{(\alpha)}
&g_{m+1}^{(\alpha)}&0&\cdots&0\\
\ddots &\ddots  &     &   \ddots  &\ddots&\ddots&\vdots& &\vdots\\
g_{N_1+N_I+N_2 m}^{(\alpha)}&
g_{N_1+N_I+N_2 m-1}^{(\alpha)}&\cdots
&g_{N_2 m+3}^{(\alpha)}&g_{N_2 m+2}^{(\alpha)}
&g_{N_2m+1}^{(\alpha)}&0&\cdots&0\\
\end{array}
\right]\\
\begin{array}{cc}
\rule{100mm}{0mm}&\underbrace{\rule{10mm}{0mm}}_{N_2}
\end{array}
\end{array};
\end{equation}

\begin{equation}\label{equation:app.8}
\begin{array}{c}
\textbf{A}_2=h_2^{-\alpha}\cdot
\left[ \begin{array}{c}
\textbf{A}_{2,1}\\
\textbf{A}_{2,2}\\
\vdots\\
\textbf{A}_{2,\frac{N_1+N_I}{m}-1}\\
\textbf{A}_{2,\frac{N_1+N_I}{m}}\\
\textbf{A}_{2,\frac{N_1+N_I}{m}+1}\\
\end{array}
\right],
\end{array}
\end{equation}
where
\begin{equation}
\begin{array}{l}
\textbf{A}_{2,k}=
\left[ \begin{array}{cccccccccccccccccccccccc}
0&\cdots&0&\varpi_{k,1}^{(2,\alpha)}& 0&\cdots&0&\varpi_{k-1,1}^{(2,\alpha)}&0&\cdots&0&\cdots&
0&\cdots&0&\varpi_{1,1}^{(2,\alpha)}&0&\cdots&0 &\varpi_{0,1}^{(2,\alpha)}&0&\cdots&0\\
0&\cdots&0&\varpi_{k,2}^{(2,\alpha)}& 0&\cdots&0&\varpi_{k-1,2}^{(2,\alpha)}&0&\cdots&0&\cdots&
0&\cdots&0&\varpi_{1,2}^{(2,\alpha)}&0&\cdots&0 &\varpi_{0,2}^{(2,\alpha)}&0&\cdots&0\\
 &\vdots& & \vdots         &  &\vdots& & \vdots         &  &\vdots& &\vdots&  &\vdots& &\vdots&  &\vdots&& \vdots&& \vdots\\
0&\cdots&0&\varpi_{k,m}^{(2,\alpha)}& 0&\cdots&0&\varpi_{k-1,m}^{(2,\alpha)}&0&\cdots&0&\cdots&
0&\cdots&0&\varpi_{1,m}^{(2,\alpha)}&0&\cdots&0 &\varpi_{0,m}^{(2,\alpha)}&0&\cdots&0\\
\end{array}
\right]_{m\times (N-1)}\\
\begin{array}{cccccccccccc}
\rule{11mm}{0mm}&
\underbrace{\rule{9mm}{0mm}}_{m-1}&\rule{9mm}{0mm}&
\underbrace{\rule{9mm}{0mm}}_{m-1}&\rule{10mm}{0mm}&
\underbrace{\rule{9mm}{0mm}}_{m-1}&\rule{4mm}{0mm}&
\underbrace{\rule{9mm}{0mm}}_{m-1}&\rule{9mm}{0mm}&
\underbrace{\rule{9mm}{0mm}}_{m-1}&\rule{5mm}{0mm}&
\underbrace{\rule{9mm}{0mm}}_{N-1-(k+1)m}
\end{array}
\end{array}
\end{equation}
for $k=1,2,\cdots,\frac{N_1+N_I}{m}-1$, and
\begin{equation}
\begin{array}{l}
\textbf{A}_{2,\frac{N_1+N_I}{m}}=
\\
\left[ \begin{array}{cccccccccccccccccccccccc}
0&\cdots&0& \varpi_{\frac{N_1+N_I}{m},1}^{(2,\alpha)}& 0&\cdots&0&\cdots&
0&\cdots&0&\varpi_{2,1}^{(2,\alpha)}&0&\cdots&0 &\varpi_{1,1}^{(2,\alpha)}&\varpi_{0,1}^{(2,\alpha)}& 0&\cdots&0\\
0&\cdots&0& \varpi_{\frac{N_1+N_I}{m},2}^{(2,\alpha)}& 0&\cdots&0&\cdots&
0&\cdots&0&\varpi_{2,2}^{(2,\alpha)}&0&\cdots&0 &\varpi_{1,2}^{(2,\alpha)}&\varpi_{0,2}^{(2,\alpha)}& 0&\cdots&0\\
 &\vdots& & \vdots         &  &\vdots& & \vdots         &  &\vdots& &\vdots&  &\vdots& &\vdots& &\vdots \\
0&\cdots&0& \varpi_{\frac{N_1+N_I}{m},m}^{(2,\alpha)}& 0&\cdots&0&\cdots&
0&\cdots&0&\varpi_{2,m}^{(2,\alpha)}&0&\cdots&0 &\varpi_{1,m}^{(2,\alpha)}&\varpi_{0,m}^{(2,\alpha)}& 0&\cdots&0\\
\end{array}
\right]_{m\times (N-1)}\\
\begin{array}{cccccccccccc}
\rule{1mm}{0mm}&
\underbrace{\rule{9mm}{0mm}}_{m-1}&\rule{15mm}{0mm}&
\underbrace{\rule{9mm}{0mm}}_{m-1}&\rule{4mm}{0mm}&
\underbrace{\rule{9mm}{0mm}}_{m-1}&\rule{9mm}{0mm}&
\underbrace{\rule{9mm}{0mm}}_{m-1}&\rule{18mm}{0mm}&
\underbrace{\rule{9mm}{0mm}}_{N_2-1}
\end{array}
\end{array},
\end{equation}
and
\begin{equation}
\begin{array}{l}
\textbf{A}_{2,\frac{N_1+N_I}{m}+1}=
\\
\left[ \begin{array}{cccccccccccccccccccccccc}
0&\cdots&0& w_{\frac{N_1+N_I}{m}+1}^{(2,\alpha)}& 0&\cdots&0&\cdots&
0&\cdots&0&w_{2}^{(2,\alpha)}&w_{1}^{(2,\alpha)}&w_{0}^{(2,\alpha)}\\
 &\vdots& &\vdots                           &
 &\vdots& &\vdots                           &&\vdots& &\ddots&\ddots&\ddots\\
0&\cdots&0 &w_{\frac{N_1+N_I}{m}+N_2-1}^{(2,\alpha)}&
0&\cdots&0&\cdots&0&\cdots&0 &w_{N_2}^{(2,\alpha)}&w_{N_2-1}^{(2,\alpha)}&
  \cdots&w_1^{(2,\alpha)}&w_0^{(2,\alpha)}\\
0&\cdots&0 &w_{\frac{N_1+N_I}{m}+N_2}^{(2,\alpha)}&0&\cdots&0&\cdots&
0&\cdots&0 &w_{N_2+1}^{(2,\alpha)}&w_{N_2}^{(2,\alpha)}&
  \cdots&w_2^{(2,\alpha)}&w_1^{(2,\alpha)}\\
\end{array}
\right]_{N_2 \times (N-1)}\\
\begin{array}{cccccccccc}
\rule{1mm}{0mm}&
\underbrace{\rule{9mm}{0mm}}_{m-1}&\rule{21mm}{0mm}&
\underbrace{\rule{9mm}{0mm}}_{m-1}&\rule{4mm}{0mm}&
\underbrace{\rule{9mm}{0mm}}_{m-1}&\rule{37mm}{0mm}
\end{array}
\end{array};
\end{equation}

\begin{equation}\label{equation:app.4}
\textbf{M}_1=
\left[ \begin{array}{ccccccccccccc}
1\\
&1\\
&&\ddots\\
&&&1\\
\hdashline[2pt/2pt]
&&&&s_0^{h_1,\epsilon}\\
&&&&&s_1^{h_1,\epsilon}\\
&&&&&&\ddots\\
&&&&&&&&s_{N_I-1}^{h_1,\epsilon}\\
\hdashline[2pt/2pt]
&&&&&&&&&&0\\
&&&&&&&&&&&\ddots\\
&&&&&&&&&&&&0
\end{array}
\right]\begin{array}{l}
\left.\rule{0mm}{9mm}\right\}N_1\\
\left.\rule{0mm}{10mm}\right\} N_I\\
\left.\rule{0mm}{8mm}\right\} N_2
\end{array},
\end{equation}
where
\begin{equation}\label{equation:app.6}
s_k^{h_1,\epsilon}=\int_{-1}^{1-k h_1/\epsilon}\rho(z)dz, ~~k=0,1,\cdots,N_I-1;
\end{equation}

\begin{equation}\label{equation:app.9}
\textbf{H}^{i}=\textbf{H}_0^{i}+\textbf{H}_N^{i},
\end{equation}
where
\begin{eqnarray}
\textbf{H}_0^{i}=\frac{\tau K}{2}
\left(h_1^{-\alpha}\textbf{h}_{0}^D-\textbf{h}_{0}^S\right)\left(u_0^i+u_0^{i+1}\right),
\label{equation:app.10}\\
\textbf{H}_N^{i}=\frac{\tau K}{2h_2^{\alpha}}
\textbf{h}_{N}^D\left(u_b^i+u_b^{i+1}\right),\label{equation:app.11}
\end{eqnarray}

\begin{equation}\label{equation:app.12}
\textbf{h}_{0}^D(n)=\left\{ \begin{array}{ll}
\frac{n^{-\alpha}}{\Gamma(1-\alpha)},
&1\leq n\leq N_1+N_I-1,\\
\\
\frac{\big(N_1+N_I+(n-N_1-N_I)m\big)^{-\alpha}}{\Gamma(1-\alpha)},
&N_1+N_I \leq n\leq N-1;
\end{array}
\right.
\end{equation}

\begin{equation}\label{equation:app.13}
\textbf{h}_{0}^S=\textbf{D}\left[1,1,\cdots,1\right]^{T}+h_2^{-\alpha}\left[0,0,\cdots,0,w_0^{(2,\alpha)}\right]^T;
\end{equation}

and
\begin{eqnarray}\label{equation:app.14}
\textbf{h}_{N}^D=\left[0,0,\cdots,0,w_0^{(2,\alpha)}\right]^T.
\end{eqnarray}


If $h_2\leq h_1$, then
\begin{equation}\label{equation:app.18}
\begin{array}{l}
\textbf{A}_1=h_1^{-\alpha}\cdot
\left[ \begin{array}{c}
\textbf{A}_{1,1}\\
\textbf{A}_{1,2}\\
\vdots\\
\textbf{A}_{1,\frac{N_1+N_I}{m}-1}\\
\textbf{A}_{1,\frac{N_1+N_I}{m}}\\
\textbf{A}_{1,\frac{N_1+N_I}{m}+1}\\
\end{array}
\right]
\end{array},
\end{equation}
where
\begin{equation}
\begin{array}{l}
\textbf{A}_{1,1}=
\left[ \begin{array}{cccccccccccccccccccccccc}
w_{1}^{(1,\alpha)}   & w_{0}^{(1,\alpha)}&0&0&0&\cdots&0    \\
w_{2}^{(1,\alpha)}   & w_{1}^{(1,\alpha)}    & w_{0}^{(1,\alpha)}&0&0&\cdots&0  \\
\ddots  &\ddots    &  \ddots              \\
w_{N_1}^{(1,\alpha)} & w_{N_1-1}^{(1,\alpha)}& \cdots  &
w_{0}^{(1,\alpha)}     &0&\cdots&0 \\
\end{array}
\right]_{N_1 \times (N-1)}\\
\begin{array}{cc}
\rule{48mm}{0mm}&
\underbrace{\rule{9mm}{0mm}}_{N-2-N_1}
\end{array}
\end{array},
\end{equation}
and
\begin{equation}
\begin{array}{l}
\textbf{A}_{1,k}=\\
\left[ \begin{array}{cccccccccccccccccccccccc}
\varpi_{N_1+k,0}^{(1,\alpha)}&
\cdots&\varpi_{k,0}^{(1,\alpha)}& 0&\cdots&0&\cdots& 0&\cdots&0& \varpi_{1,0}^{(1,\alpha)}& 0&\cdots&0&\varpi_{0,0}^{(1,\alpha)}& 0&\cdots&0\\
\varpi_{N_1+k,1}^{(1,\alpha)}&
\cdots&\varpi_{k,1}^{(1,\alpha)}& 0&\cdots&0&\cdots& 0&\cdots&0& \varpi_{1,1}^{(1,\alpha)}& 0&\cdots&0&\varpi_{0,1}^{(1,\alpha)}& 0&\cdots&0\\
\ddots&\ddots&\vdots&&\vdots&&\vdots&&\vdots&&\vdots&&\vdots&&\vdots&&\vdots\\
\varpi_{N_1+k,m-1}^{(1,\alpha)}&
\cdots&\varpi_{k,m-1}^{(1,\alpha)}& 0&\cdots&0&\cdots& 0&\cdots&0& \varpi_{1,m-1}^{(1,\alpha)}& 0&\cdots&0&\varpi_{0,m-1}^{(1,\alpha)}& 0&\cdots&0\\
\end{array}
\right]_{m\times (N-1)}\\
\begin{array}{cccccccccc}
\rule{34mm}{0mm}&
\underbrace{\rule{9mm}{0mm}}_{m-1}&\rule{5mm}{0mm}&
\underbrace{\rule{9mm}{0mm}}_{m-1}&\rule{10mm}{0mm}&
\underbrace{\rule{9mm}{0mm}}_{m-1}&\rule{7mm}{0mm}&
\underbrace{\rule{9mm}{0mm}}_{N-2-N_1-km}
\end{array}
\end{array}
\end{equation}
for $k=2,\cdots,\frac{N_1+N_I}{m}-1$,
and
\begin{equation}
\begin{array}{l}
\textbf{A}_{1,\frac{N_1+N_I}{m}}=\\
\left[ \begin{array}{cccccccccccccccccccccccc}
\varpi_{N_1+\frac{N_I+N_2}{m},0}^{(1,\alpha)}&
\cdots&\varpi_{\frac{N_I+N_2}{m},0}^{(1,\alpha)}& 0&\cdots&0&\cdots& 0&\cdots&0& \varpi_{2,0}^{(1,\alpha)}& 0&\cdots&0&\varpi_{1,0}^{(1,\alpha)}&0&\cdots&0\\
\varpi_{N_1+\frac{N_I+N_2}{m},1}^{(1,\alpha)}&
\cdots&\varpi_{\frac{N_I+N_2}{m},1}^{(1,\alpha)}& 0&\cdots&0&\cdots& 0&\cdots&0& \varpi_{2,1}^{(1,\alpha)}& 0&\cdots&0&\varpi_{1,1}^{(1,\alpha)}&0&\cdots&0\\
\ddots&\ddots&\vdots&&\vdots&&\vdots&&\vdots&&\vdots&&\vdots&&\vdots&&\vdots\\
\varpi_{N_1+\frac{N_I+N_2}{m},m-1}^{(1,\alpha)}&
\cdots&\varpi_{\frac{N_I+N_2}{m},m-1}^{(1,\alpha)}& 0&\cdots&0&\cdots& 0&\cdots&0& \varpi_{2,m-1}^{(1,\alpha)}& 0&\cdots&0&\varpi_{1,m-1}^{(1,\alpha)}&0&\cdots&0\\
\end{array}
\right]_{m\times (N-1)}\\
\begin{array}{cccccccccc}
\rule{50mm}{0mm}&
\underbrace{\rule{9mm}{0mm}}_{m-1}&\rule{4mm}{0mm}&
\underbrace{\rule{9mm}{0mm}}_{m-1}&\rule{11mm}{0mm}&
\underbrace{\rule{9mm}{0mm}}_{m-1}&\rule{10mm}{0mm}&
\underbrace{\rule{9mm}{0mm}}_{m-1}
\end{array}
\end{array},
\end{equation}
\begin{equation}
\begin{array}{l}
\textbf{A}_{1,\frac{N_1+N_I}{m}+1}=\\
\left[ \begin{array}{cccccccccccccccccccccccc}
\varpi_{N_1+\frac{N_I+N_2}{m}+1,0}^{(1,\alpha)}&
\cdots&\varpi_{\frac{N_I+N_2}{m}+1,0}^{(1,\alpha)}& 0&\cdots&0&\cdots& 0&\cdots&0& \varpi_{3,0}^{(1,\alpha)}& 0&\cdots&0&\varpi_{2,0}^{(1,\alpha)}& 0&\cdots&0\\
\varpi_{N_1+\frac{N_I+N_2}{m}+1,1}^{(1,\alpha)}&
\cdots&\varpi_{\frac{N_I+N_2}{m}+1,1}^{(1,\alpha)}& 0&\cdots&0&\cdots& 0&\cdots&0& \varpi_{3,1}^{(1,\alpha)}& 0&\cdots&0&\varpi_{2,1}^{(1,\alpha)}& 0&\cdots&0\\
\ddots&\ddots&\vdots&&\vdots&&\vdots&&\vdots&&\vdots&&\vdots&&\vdots&&\vdots\\
\varpi_{N_1+\frac{N_I+N_2}{m}+1,m-1}^{(1,\alpha)}&
\cdots&\varpi_{\frac{N_I+N_2}{m}+1,m-1}^{(1,\alpha)}& 0&\cdots&0&\cdots& 0&\cdots&0& \varpi_{3,m-1}^{(1,\alpha)}& 0&\cdots&0&\varpi_{2,m-1}^{(1,\alpha)}& 0&\cdots&0\\
\end{array}
\right]_{m\times (N-1)}\\
\begin{array}{cccccccccc}
\rule{57mm}{0mm}&
\underbrace{\rule{9mm}{0mm}}_{m-1}&\rule{4mm}{0mm}&
\underbrace{\rule{9mm}{0mm}}_{m-1}&\rule{11mm}{0mm}&
\underbrace{\rule{9mm}{0mm}}_{m-1}&\rule{10mm}{0mm}&
\underbrace{\rule{9mm}{0mm}}_{m-1}
\end{array}
\end{array};
\end{equation}

\begin{equation}\label{equation:app.20}
\begin{array}{l}
\textbf{A}_2=h_2^{-\alpha}\cdot
\left[ \begin{array}{ccc;{2pt/2pt}cccccc}
0&\cdots&0&0 &\cdots&\cdots&\cdots&\cdots&0\\
&\vdots&&\vdots&&&&&\vdots\\
0&\cdots&0&0 &\cdots&\cdots&\cdots&\cdots&0\\
\hdashline[2pt/2pt]
0&\cdots&0& w_1^{(2,\alpha)} & w_0^{(2,\alpha)}          \\
0&\cdots&0& w_2^{(2,\alpha)}  & w_1^{(2,\alpha)} & w_0^{(2,\alpha)} \\
&\vdots & & \ddots &\ddots  &  \ddots    &   \ddots \\
0&\cdots&0& w_{N_2+N_I-1}^{(2,\alpha)}&w_{N_2+N_I-2}^{(2,\alpha)}&\cdots &
w_{2}^{(2,\alpha)}&w_{1}^{(2,\alpha)}&w_{0}^{(2,\alpha)}\\
0&\cdots&0& w_{N_2+N_I}^{(2,\alpha)}&w_{N_2+N_I-1}^{(2,\alpha)}&\cdots &
w_{3}^{(2,\alpha)}&w_{2}^{(2,\alpha)}&w_{1}^{(2,\alpha)}\\
\end{array}
\right]
\begin{array}{l}
\left.\rule{0mm}{7mm}\right\}N_1\\
\rule{0mm}{21mm}
\end{array}
\\
\begin{array}{cc}
\rule{17mm}{0mm}&\underbrace{\rule{9mm}{0mm}}_{N_1}
\end{array}
\end{array};
\end{equation}

\begin{equation}\label{equation:app.15}
\textbf{M}_1=
\left[ \begin{array}{ccccccccccccc}
1\\
&1\\
&&\ddots\\
&&&1\\
\hdashline[2pt/2pt]
&&&&s_0^{h_2,\epsilon}\\
&&&&&s_1^{h_2,\epsilon}\\
&&&&&&\ddots\\
&&&&&&&&s_{N_I-1}^{h_2,\epsilon}\\
\hdashline[2pt/2pt]
&&&&&&&&&&0\\
&&&&&&&&&&&\ddots\\
&&&&&&&&&&&&0
\end{array}
\right]\begin{array}{l}
\left.\rule{0mm}{9mm}\right\}N_1\\
\left.\rule{0mm}{10mm}\right\} N_I\\
\left.\rule{0mm}{8mm}\right\} N_2
\end{array},
\end{equation}
where
\begin{equation}\label{equation:app.16}
s_k^{h_2,\epsilon}=\int_{-1}^{1-k h_2/\epsilon}\rho(z)dz, ~~k=0,1,\cdots,N_I-1;
\end{equation}

\begin{eqnarray}\label{equation:app.25}
\textbf{H}^{i}=\frac{\tau K}{2h_2^{\alpha}}
\textbf{h}_{N}^D\left(u_b^i+u_b^{i+1}\right),
\end{eqnarray}
and
\begin{eqnarray}\label{equation:app.28}
\textbf{h}_{N}^D=\left[0,0,\cdots,0,w_0^{(2,\alpha)}\right]^T.
\end{eqnarray}

\item{Elements in the scheme of right Riemann-Liouville problem}\\

As for the matrices and vectors in (\ref{equation3.0.13}), there is
\begin{eqnarray}
\widetilde{\textbf{D}}=\widetilde{\textbf{D}}_1+\widetilde{\textbf{D}}_2,\label{equation:app.2.2.1}\\
\widetilde{\textbf{D}}_1=\widetilde{\textbf{A}}_1\textbf{M}_1,\label{equation:app.2.2.2}\\
\widetilde{\textbf{D}}_2=\widetilde{\textbf{A}}_2\textbf{M}_2.\label{equation:app.2.2.3}
\end{eqnarray}

If $h_1\leq h_2$, then
\begin{eqnarray}
\widetilde{\textbf{A}}_1=\widehat{\textbf{A}}_2^T,\label{equation:app.2.2.4}\\
\widetilde{\textbf{A}}_2=\widehat{\textbf{A}}_1^T,\label{equation:app.2.2.5}
\end{eqnarray}
where $\widehat{\textbf{A}}_1$ and $\widehat{\textbf{A}}_2$ are same like $\textbf{A}_1$ in (\ref{equation:app.18}) and $\textbf{A}_2$ in (\ref{equation:app.20}), respectively, just by replacing $N_1$ with $N_2$, $N_2$ with $N_1$, $(1-\frac{q}{m})$ with $\frac{q}{m}$, $h_1$ with $h_2$, and $h_2$ with $h_1$.
And
\begin{eqnarray}\label{equation:app.2.2.6}
\widetilde{\textbf{H}}^{i}=\frac{\tau K}{2h_1^{\alpha}}
\widetilde{\textbf{h}}_{0}^D\left(u_0^i+u_0^{i+1}\right),
\end{eqnarray}
where
\begin{eqnarray}\label{equation:app.2.2.7}
\widetilde{\textbf{h}}_{0}^D=\left[w_0^{(2,\alpha)},0,0,\cdots,0\right]^T.
\end{eqnarray}

If $h_2\leq h_1$, then
\begin{eqnarray}
\widetilde{\textbf{A}}_1=\widehat{\textbf{A}}_2^T,\label{equation:app.2.2.8}\\
\widetilde{\textbf{A}}_2=\widehat{\textbf{A}}_1^T,\label{equation:app.2.2.9}
\end{eqnarray}
where $\widehat{\textbf{A}}_1$ and $\widehat{\textbf{A}}_2$ are same like $\textbf{A}_1$ in (\ref{equation:app.7}) and $\textbf{A}_2$ in (\ref{equation:app.8}), respectively, just by replacing $N_1$ with $N_2$, $N_2$ with $N_1$, $(1-\frac{q}{m})$ with $\frac{q}{m}$, $h_1$ with $h_2$, and $h_2$ with $h_1$.
And

\begin{equation}\label{equation:app.2.2.9}
\widetilde{\textbf{H}}^{i}=\widetilde{\textbf{H}}_0^{i}+\widetilde{\textbf{H}}_N^{i},
\end{equation}
where
\begin{eqnarray}
\widetilde{\textbf{H}}_N^{i}=\frac{\tau K}{2}
\left(h_2^{-\alpha}\widetilde{\textbf{h}}_{N}^D-\widetilde{\textbf{h}}_{N}^S\right)\left(u_b^i+u_b^{i+1}\right),
\label{equation:app.2.2.14}\\
\widetilde{\textbf{H}}_0^{i}=\frac{\tau K}{2h_1^{\alpha}}
\widetilde{\textbf{h}}_{0}^D\left(u_0^i+u_0^{i+1}\right),\label{equation:app.2.2.10}
\end{eqnarray}

\begin{equation}\label{equation:app.2.2.11}
\widetilde{\textbf{h}}_{N}^D(n)=\left\{ \begin{array}{ll}
\frac{\big(N_2+N_I+(N_1+1-n)m\big)^{-\alpha}}{\Gamma(1-\alpha)},
&1\leq n\leq N_1+1,\\
\frac{(N-n)^{-\alpha}}{\Gamma(1-\alpha)},
&N_1+2\leq n\leq N-1;\\
\end{array}
\right.
\end{equation}

\begin{equation}\label{equation:app.2.2.12}
\widetilde{\textbf{h}}_{N}^S=\widetilde{\textbf{D}}\left[1,1,\cdots,1\right]^{T}+h_1^{-\alpha}\left[w_0^{(2,\alpha)},0,0,\cdots,0\right]^T;
\end{equation}
and
\begin{eqnarray}\label{equation:app.2.2.13}
\widetilde{\textbf{h}}_{0}^D=\left[w_0^{(2,\alpha)},0,0,\cdots,0\right]^T.
\end{eqnarray}

\end{enumerate}

\section*{Appendix~B}

During the real computation, perturbation on matrix \textbf{D} in Eq. (66) should be taken into consideration, since $\int_{-1}^{1-d(x)/\epsilon}\rho(z)dz$ is numerically computed as in Remark \ref{remark:2.2.1}. Now we give a brief estimate for the difference $\textbf{U}^{i+1}-\widetilde{\textbf{U}}^{i+1}$, where $\widetilde{\textbf{U}}^{i+1}$ is the solution of the system
\begin{equation}\label{equation_1_remark}
\left(\textbf{I}-\frac{\tau K}{2}\left(\textbf{D}+\delta\textbf{D}\right)\right) \widetilde{\textbf{U}}^{i+1}
=\left(\textbf{I}+\frac{\tau K}{2}\left(\textbf{D}+\delta\textbf{D}\right)\right) \textbf{U}^{i}
+\tau \textbf{F}^{i+1/2}+\textbf{H}^{i},
\end{equation}
and $\delta\textbf{D}$ is the perturbation matrix of $\textbf{D}$, assuming that as $\textbf{I}-\frac{\tau K}{2}\textbf{D}$, $\textbf{I}-\frac{\tau K}{2}\left(\textbf{D}+\delta\textbf{D}\right)$ is nonsingular and real, too.

Here, we need a basic result about error estimates and condition numbers as in the following lemma:
\begin{lemma}(\cite{Ortega:87})\label{lemma:3.2.2}
Assume that $\textbf{x}$ and $\textbf{y}$ satisfies
\begin{equation}\label{equation_2_remark}
\textbf{A}\textbf{x}=\textbf{b}
\end{equation}
and
\begin{equation}\label{equation_3_remark}
\left(\textbf{A}+\delta\textbf{A}\right)\textbf{y}=\textbf{b}+\delta\textbf{b},
\end{equation}
respectively, where $\textbf{A}$ and $\delta\textbf{A}$ are real $(N-1)\times(N-1)$ matrices, $\textbf{b}$ and $\delta\textbf{b}$ are real $(N-1)\times 1$ vectors. If $\|\textbf{A}^{-1}\|\cdot \|\delta\textbf{A}\|<1$, then $\textbf{A}+\delta\textbf{A}$ is nonsingular, and
\begin{equation}\label{equation_4_remark}
\frac{\|\textbf{x}-\textbf{y}\|}{\|\textbf{x}\|}\leq \frac{K(\textbf{A})}{1-K(\textbf{A})\|\delta\textbf{A}\|/\|\textbf{A}\|}
\left(\frac{\|\delta\textbf{A}\|}{\|\textbf{A}\|}+\frac{\|\delta\textbf{b}\|}{\|\textbf{b}\|}\right),
\end{equation}
where $K(\textbf{A}):=\|\textbf{A}^{-1}\|\cdot\|\textbf{A}\|$ is the condition number of $\textbf{A}$.
\end{lemma}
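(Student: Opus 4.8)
The statement is the classical perturbation bound for linear systems (quoted here from \cite{Ortega:87}), so the plan is to reproduce its standard proof, working throughout with a submultiplicative matrix norm consistent with the chosen vector norm. First I would establish nonsingularity of $\textbf{A}+\delta\textbf{A}$ by the factorization $\textbf{A}+\delta\textbf{A}=\textbf{A}\bigl(\textbf{I}+\textbf{A}^{-1}\delta\textbf{A}\bigr)$: since $\|\textbf{A}^{-1}\delta\textbf{A}\|\le\|\textbf{A}^{-1}\|\,\|\delta\textbf{A}\|<1$ by hypothesis, the Neumann series shows that $\textbf{I}+\textbf{A}^{-1}\delta\textbf{A}$ is invertible with $\bigl\|(\textbf{I}+\textbf{A}^{-1}\delta\textbf{A})^{-1}\bigr\|\le\bigl(1-\|\textbf{A}^{-1}\|\,\|\delta\textbf{A}\|\bigr)^{-1}$, and hence $\textbf{A}+\delta\textbf{A}$ is nonsingular.

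Next I would subtract (\ref{equation_2_remark}) from (\ref{equation_3_remark}). Writing $\textbf{y}=\textbf{x}+(\textbf{y}-\textbf{x})$ in (\ref{equation_3_remark}) and cancelling $\textbf{A}\textbf{x}=\textbf{b}$ gives $\bigl(\textbf{I}+\textbf{A}^{-1}\delta\textbf{A}\bigr)(\textbf{y}-\textbf{x})=\textbf{A}^{-1}\delta\textbf{b}-\textbf{A}^{-1}\delta\textbf{A}\,\textbf{x}$, hence $\textbf{y}-\textbf{x}=\bigl(\textbf{I}+\textbf{A}^{-1}\delta\textbf{A}\bigr)^{-1}\bigl(\textbf{A}^{-1}\delta\textbf{b}-\textbf{A}^{-1}\delta\textbf{A}\,\textbf{x}\bigr)$. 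Taking norms and inserting the bound from the first step yields $\|\textbf{y}-\textbf{x}\|\le \frac{\|\textbf{A}^{-1}\|}{1-\|\textbf{A}^{-1}\|\,\|\delta\textbf{A}\|}\bigl(\|\delta\textbf{b}\|+\|\delta\textbf{A}\|\,\|\textbf{x}\|\bigr)$.

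Finally I would pass to relative quantities: dividing by $\|\textbf{x}\|$ and using $\|\textbf{b}\|=\|\textbf{A}\textbf{x}\|\le\|\textbf{A}\|\,\|\textbf{x}\|$ (so $\|\textbf{x}\|^{-1}\le\|\textbf{A}\|/\|\textbf{b}\|$) to rewrite $\|\delta\textbf{b}\|/\|\textbf{x}\|\le\|\textbf{A}\|\,\|\delta\textbf{b}\|/\|\textbf{b}\|$, and writing $\|\delta\textbf{A}\|=\|\textbf{A}\|\cdot\|\delta\textbf{A}\|/\|\textbf{A}\|$ in the two remaining occurrences, the factor $\|\textbf{A}^{-1}\|\,\|\textbf{A}\|$ assembles into $K(\textbf{A})$ and one arrives at precisely (\ref{equation_4_remark}). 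Because this is a textbook estimate, I do not expect a genuine obstacle; the only points requiring care are that the matrix norm be submultiplicative and compatible with the vector norm (needed for the Neumann bound and for $\|\textbf{A}\textbf{x}\|\le\|\textbf{A}\|\,\|\textbf{x}\|$), and that $\textbf{b}\neq\textbf{0}$ (equivalently $\textbf{x}\neq\textbf{0}$) so that the divisions by $\|\textbf{b}\|$ and $\|\textbf{x}\|$ are legitimate.
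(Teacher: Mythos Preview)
Your argument is the standard, correct proof of this classical perturbation estimate. The paper itself does not prove the lemma at all---it simply cites it from \cite{Ortega:87}---so there is nothing to compare against; your proposal supplies exactly the textbook derivation that the citation points to.
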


It is obvious that $(\ref{equation3.0.12})-(\ref{equation_1_remark})$ leads to
\begin{equation}\label{equation_5_remark}
\left(\textbf{I}-\frac{\tau K}{2}\textbf{D}\right) \left(\textbf{U}^{i+1}-\widetilde{\textbf{U}}^{i+1}\right)
+\frac{\tau K}{2}\delta\textbf{D}\cdot\widetilde{\textbf{U}}^{i+1}
=-\frac{\tau K}{2}\delta\textbf{D}\cdot\textbf{U}^{i},
\end{equation}
and $(\ref{equation_2_remark})-(\ref{equation_3_remark})$ leads to
\begin{equation}\label{equation_6_remark}
\textbf{A}\left(\textbf{x}-\textbf{y}\right)
-\delta\textbf{A}\cdot\textbf{y}=-\delta\textbf{b}.
\end{equation}
Comparing Eq. (\ref{equation_5_remark}) with Eq. (\ref{equation_6_remark}), and using Lemma \ref{lemma:3.2.2}, we can get the following error estimate for the difference between the solutions of $(\ref{equation3.0.12})$ and $(\ref{equation_1_remark})$.

\begin{lemma}\label{remark3.2.2}
If If $\|\left(\textbf{I}-\frac{\tau K}{2}\textbf{D}\right)^{-1}\|\cdot \|\frac{\tau K}{2}\delta\textbf{D}\|<1$, then $\textbf{I}-\frac{\tau K}{2}\left(\textbf{D}+\delta\textbf{D}\right)$ is nonsingular, and
\begin{equation}\label{equation_7_remark}
\frac{\|\textbf{U}^{i+1}-\widetilde{\textbf{U}}^{i+1}\|}{\|\textbf{U}^{i+1}\|}\leq \frac{K(\textbf{L})}{1-K(\textbf{L})\|\delta\textbf{L}\|/\|\textbf{L}\|}
\left(\frac{\|\delta\textbf{L}\|}{\|\textbf{L}\|}+\frac{\|\delta\textbf{v}^i\|}{\|\textbf{v}^i\|}\right)
\end{equation}
holds, where $\delta\textbf{L}=-\frac{\tau K}{2}\delta\textbf{D}$, $\textbf{L}=\textbf{I}-\frac{\tau K}{2}\textbf{D}$,
$\delta\textbf{v}^i=\frac{\tau K}{2}\delta\textbf{D}\cdot\textbf{U}^{i}$, and $\textbf{v}^i=\left(\textbf{I}+\frac{\tau K}{2}\textbf{D}\right) \textbf{U}^{i}
+\tau \textbf{F}^{i+1/2}+\textbf{H}^{i}$.
\end{lemma}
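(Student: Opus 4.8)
The plan is to recognize Lemma \ref{remark3.2.2} as an immediate corollary of the abstract perturbation bound Lemma \ref{lemma:3.2.2}, once the two linear systems (\ref{equation3.0.12}) and (\ref{equation_1_remark}) are cast into the canonical forms (\ref{equation_2_remark}) and (\ref{equation_3_remark}). First I would rewrite (\ref{equation3.0.12}) at the relevant time level as $\textbf{L}\,\textbf{U}^{i+1}=\textbf{v}^i$, where $\textbf{L}=\textbf{I}-\frac{\tau K}{2}\textbf{D}$ and $\textbf{v}^i=\big(\textbf{I}+\frac{\tau K}{2}\textbf{D}\big)\textbf{U}^{i}+\tau\textbf{F}^{i+1/2}+\textbf{H}^{i}$, matching the notation fixed in the statement. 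Next I would expand the perturbed system (\ref{equation_1_remark}): its coefficient matrix is $\textbf{I}-\frac{\tau K}{2}(\textbf{D}+\delta\textbf{D})=\textbf{L}+\delta\textbf{L}$ with $\delta\textbf{L}=-\frac{\tau K}{2}\delta\textbf{D}$, and its right-hand side is $\big(\textbf{I}+\frac{\tau K}{2}(\textbf{D}+\delta\textbf{D})\big)\textbf{U}^{i}+\tau\textbf{F}^{i+1/2}+\textbf{H}^{i}=\textbf{v}^i+\delta\textbf{v}^i$ with $\delta\textbf{v}^i=\frac{\tau K}{2}\delta\textbf{D}\cdot\textbf{U}^{i}$; thus $\widetilde{\textbf{U}}^{i+1}$ solves $(\textbf{L}+\delta\textbf{L})\widetilde{\textbf{U}}^{i+1}=\textbf{v}^i+\delta\textbf{v}^i$. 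This is precisely the content of (\ref{equation_5_remark})--(\ref{equation_6_remark}), which I would cite rather than re-derive.

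With the identifications $\textbf{A}\leftrightarrow\textbf{L}$, $\delta\textbf{A}\leftrightarrow\delta\textbf{L}$, $\textbf{b}\leftrightarrow\textbf{v}^i$, $\delta\textbf{b}\leftrightarrow\delta\textbf{v}^i$, $\textbf{x}\leftrightarrow\textbf{U}^{i+1}$, $\textbf{y}\leftrightarrow\widetilde{\textbf{U}}^{i+1}$, the hypothesis $\|\textbf{A}^{-1}\|\cdot\|\delta\textbf{A}\|<1$ of Lemma \ref{lemma:3.2.2} becomes exactly the stated assumption $\|\textbf{L}^{-1}\|\cdot\|\delta\textbf{L}\|=\|(\textbf{I}-\frac{\tau K}{2}\textbf{D})^{-1}\|\cdot\|\frac{\tau K}{2}\delta\textbf{D}\|<1$. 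Invoking Lemma \ref{lemma:3.2.2} then yields both the nonsingularity of $\textbf{L}+\delta\textbf{L}=\textbf{I}-\frac{\tau K}{2}(\textbf{D}+\delta\textbf{D})$ and the estimate (\ref{equation_7_remark}) verbatim, where $K(\textbf{L})=\|\textbf{L}^{-1}\|\cdot\|\textbf{L}\|$. No further computation is required; the whole proof is a matching-of-terms argument followed by a single citation.

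The only point that needs genuine care — and it is bookkeeping rather than a true obstacle — is checking that the right-hand side of the perturbed system splits cleanly as $\textbf{v}^i+\delta\textbf{v}^i$. This works because (\ref{equation_1_remark}) is deliberately posed with the \emph{unperturbed} iterate $\textbf{U}^i$ on the right, so $\delta\textbf{v}^i$ depends only on the already-known $\textbf{U}^i$ and on $\delta\textbf{D}$, with no implicit coupling to $\widetilde{\textbf{U}}^i$ that would otherwise spoil the correspondence with (\ref{equation_6_remark}). If one wished, one could append a short remark that under the Crank--Nicolson stepping $\textbf{L}$ is well-conditioned uniformly in $\tau$, so the prefactor in (\ref{equation_7_remark}) stays controlled; but this is not needed for the statement as written.
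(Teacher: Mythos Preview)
Your proposal is correct and follows essentially the same approach as the paper: both arguments amount to identifying the unperturbed and perturbed systems (\ref{equation3.0.12}) and (\ref{equation_1_remark}) with the abstract forms (\ref{equation_2_remark})--(\ref{equation_3_remark}) via the substitutions $\textbf{A}\leftrightarrow\textbf{L}$, $\delta\textbf{A}\leftrightarrow\delta\textbf{L}$, $\textbf{b}\leftrightarrow\textbf{v}^i$, $\delta\textbf{b}\leftrightarrow\delta\textbf{v}^i$, and then invoking Lemma~\ref{lemma:3.2.2} directly. The paper phrases this as ``comparing Eq.~(\ref{equation_5_remark}) with Eq.~(\ref{equation_6_remark})'' before applying the lemma, which is exactly the bookkeeping check you spell out.
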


Table \ref{table_1_remark} lists sets of condition numbers of $\textbf{L}=\textbf{I}-\frac{\tau K}{2}\textbf{D}$ in the sense of 2-norm for several schemes with $a=\epsilon=1$, $h_1=h_2/5$, $b=4$, and $\tau=1/400$. Results for $h_1>h_2$ are similar. Here $(11)+(11)$, $(21)+(21)$, and $(22)+(22)$ present the schemes (\ref{equation3.0.12}) that combine with specific formulae as state in numerical section.

\begin{table}
\caption{The condition numbers of $\textbf{L}$ and the preconditioned matrix $\textbf{P}^{-1}\textbf{L}$ for several specific schemes on non-uniform meshes
and different $\alpha$ with $a=\epsilon=1$, $h_1=h_2/5$, and $\tau=1/400$.}\label{table_1_remark}
\begin{tabular}{cccccccccc}
\hline\noalign{\smallskip}
\multicolumn{2}{c}{$\alpha$} &\multicolumn{2}{c}{$1.2$} &\multicolumn{2}{c}{$1.4$}&\multicolumn{2}{c}{$1.6$}&\multicolumn{2}{c}{$1.8$}\\
\noalign{\smallskip}\hline\noalign{\smallskip}
scheme &$h_2$  & $K(\textbf{L})$ & $K(\textbf{P}^{-1}\textbf{L})$
& $K(\textbf{L})$ & $K(\textbf{P}^{-1}\textbf{L})$
& $K(\textbf{L})$ & $K(\textbf{P}^{-1}\textbf{L})$
& $K(\textbf{L})$ & $K(\textbf{P}^{-1}\textbf{L})$ \\
\noalign{\smallskip}\hline\noalign{\smallskip}
             & 1/8 & 1.25 & 1.04 & 1.60 & 1.05 & 2.43 & 1.08 & 4.45 &1.12\\
             & 1/16& 1.58 & 1.09 & 2.62 & 1.15 & 5.53 & 1.26 &13.96 &1.44 \\
(11+11)      & 1/32& 2.39 & 1.21 & 5.52 & 1.43 & 16.26& 1.88 &54.02 &2.88\\
             & 1/64& 4.37 & 1.52 &14.36 & 2.28 & 56.01& 4.57 &226.33&13.46\\
             &1/128& 9.58 & 2.32 &42.46 & 5.58 &199.39& 23.16&911.40&131.36\\
\noalign{\smallskip}\hline\noalign{\smallskip}
             & 1/8 & 1.06 & 1.02 & 1.24 & 1.04 & 1.86 & 1.06 & 3.75 &1.11\\
             & 1/16& 1.15 & 1.06 & 1.69 & 1.12 & 3.71 & 1.22 &11.28 &1.41 \\
(21+21)      & 1/32& 1.42 & 1.14 & 3.07 & 1.33 & 10.07& 1.77 &42.96 &2.76\\
             & 1/64& 2.24 & 1.35 & 7.44 & 2.03 & 33.79& 4.12 &180.27&12.56\\
             &1/128& 4.68 & 1.95 &21.67 & 4.78 &120.30& 20.04&728.43&123.01\\
\noalign{\smallskip}\hline\noalign{\smallskip}
             & 1/8 & 1.25 & 1.03 & 1.59 & 1.05 & 2.42 & 1.07 & 4.44 &1.11\\
             & 1/16& 1.58 & 1.07 & 2.59 & 1.13 & 5.49 & 1.24 &13.88 &1.42\\
(22+22)      & 1/32& 2.35 & 1.17 & 5.40 & 1.36 & 15.90& 1.77 &53.33 &2.73\\
             & 1/64& 4.25 & 1.39 &13.76 & 2.01 & 54.25& 3.89 &223.75&11.83\\
             &1/128& 9.19 & 1.94 &40.67 & 4.35 &195.60& 17.47&906.94&109.96\\
\noalign{\smallskip}\hline
\end{tabular}
\end{table}

We can see from Table \ref{table_1_remark} that normally the condition number $K(\textbf{L})$ is not very large, except when $\alpha$ tends to 2 and $h_1$, $h_2$ are much small, attributing to $h_1^{-\alpha}$ being comparably much larger than $h_2^{-\alpha}$ in $\textbf{D}$.

Since the coefficients ${g_k}$ are decreasing to zero from $g_2$ \cite{Meerschaert:04,Podlubny:99}, and so do ${w_k^{(\alpha)}}$ and ${\varpi_k^{(\alpha)}}$ in (\ref{equation2.4.6}) and (\ref{equation2.4.8}) for large $k$, also, the elements of $\textbf{A}_2$ in (\ref{equation:app.8}) are non-zero very $m$ columns in the first $(N_1+N_I)$ lines, when solving the fractional PDEs and well as ODEs for large $\alpha$ and small $h_1$, $h_2$, we can simply take the preconditioner $\textbf{P}$ of the matrix $\textbf{L}$ as
\begin{equation}\label{equation_8_remark}
\textbf{P}^{-1}=\sum_{k=-2}^{2}\textbf{L}_{k}+\sum_{k=3}^{2m}\textbf{L}_{-k},
\end{equation}
where $\textbf{L}_{k}$ is the matrix in which the entries outside the $k$-th diagonal are all zeros. Table \ref{table_1_remark} shows that by using this preconditioning procedure, the condition numbers of $\textbf{L}$ in several specific schemes can be effectively reduced for different $\alpha$ and $a=\epsilon=1$, $h_1=h_2/5$, and $\tau=1/400$.

\begin{remark}\label{remark3.2.3}
Since in the numerical section, the condition numbers of $\textbf{L}$ are tolerable, i.e., there is no difference between the numerical errors whether been preconditioned or not, we don't take the preconditioning procedure for the left matrix $\textbf{L}=\textbf{I}-\frac{\tau K}{2}\textbf{D}$.
\end{remark}




\end{document}